\definecolor{gr}{rgb}   {0.,   0.69,   0.23 }
\definecolor{bl}{rgb}   {0.,   0.5,   1. }
\definecolor{mg}{rgb}   {0.85,  0.,    0.85}
\definecolor{yl}{rgb}   {0.8,  0.7,   0.}
\definecolor{or}{rgb}  {0.7,0.2,0.2}
\newtheorem{theorem}{Theorem} [section]
\newtheorem{oldtheorem}{Theorem}
\newtheorem{lemma}[theorem]{Lemma}
\newtheorem{proposition}[theorem]{Proposition}
\newtheorem{remark}[theorem]{Remark}
\newtheorem{definition}[theorem]{Definition}
\newtheorem*{acknowledgment}{Acknowledgments}
\DeclareMathOperator*{\intt}{\int}
\DeclareMathOperator*{\supp}{supp}
\newcommand{\I}{\mathcal{I}}
\newcommand{\noi}{\noindent}
\newcommand{\Z}{\mathbb{Z}}
\newcommand{\R}{\mathbb{R}}
\newcommand{\C}{\mathbb{C}}
\newcommand{\T}{\mathbb{T}}
\let\Re=\undefined\DeclareMathOperator*{\Re}{Re}
\let\P= \undefined
\newcommand{\P}{\mathbf{P}}
\newcommand{\N}{\mathcal{N}}
\newcommand{\NB}{\mathbb{N}}
\newcommand{\RR}{\mathcal{R}}
\newcommand{\F}{\mathcal{F}}
\newcommand{\al}{\alpha}
\newcommand{\dl}{\delta}
\newcommand{\nb}{\nabla}
\newcommand{\Dl}{\Delta}
\newcommand{\eps}{\varepsilon}
\newcommand{\G}{\Gamma}
\newcommand{\ld}{\lambda}
\newcommand{\s}{\sigma}
\newcommand{\Si}{\Sigma}
\newcommand{\ft}{\widehat}
\newcommand{\wt}{\widetilde}
\newcommand{\cj}{\overline}
\newcommand{\dt}{\partial_t}
\renewcommand{\l}{\ell}
\renewcommand{\o}{\omega}
\renewcommand{\O}{\Omega}
\newcommand{\les}{\lesssim}
\newcommand{\ges}{\gtrsim}
\newcommand{\jb}[1]
{\langle #1 \rangle}
\newcommand{\ind}{\mathbf 1}
\newcommand{\lr}[1]{\langle #1 \rangle}
\numberwithin{equation}{section}
\numberwithin{theorem}{section}
\newcommand{\too}{\longrightarrow}
\tikzset{
	dot/.style={circle,fill=black,draw=black,inner sep=0pt,minimum size=0.5mm},
	>=stealth,
	}
\tikzset{
	ddot/.style={circle,fill=white,draw=black,inner sep=0pt,minimum size=0.8mm},
	>=stealth,
	}
\tikzset{decision/.style={ 
        draw,
        diamond,
        aspect=1.5
    }}
\tikzset{dia2/.style
={diamond,fill=white,draw=black,inner sep=0pt,minimum size=1mm},
	>=stealth,
	}
\tikzset{dia/.style
={star,fill=black,draw=black,inner sep=0pt,minimum size=1mm},
	>=stealth,
	}
\def\DeclareSymbol#1#2#3{\expandafter\gdef\csname MH@symb@#1\endcsname{\tikz[baseline=#2,scale=0.15]{#3}}}
\def\<#1>{\csname MH@symb@#1\endcsname}
\begin{document}
\baselineskip = 15pt

\title[Higher order expansion for  a.s.~LWP of NLS] 
{Higher order expansions
for 
 the   probabilistic local Cauchy theory of the cubic nonlinear Schr\"odinger equation on 
$\R^3$}

\author[\'A.~B\'enyi, T.~Oh, and O.~Pocovnicu]
{\'Arp\'ad  B\'enyi, Tadahiro Oh, and Oana Pocovnicu}

\address{
\'Arp\'ad  B\'enyi\\
Department of Mathematics\\
 Western Washington University\\
 516 High Street, Bellingham\\
  WA 98225\\ USA}
\email{arpad.benyi@wwu.edu}

\address{
Tadahiro Oh\\
School of Mathematics\\
The University of Edinburgh\\
and The Maxwell Institute for the Mathematical Sciences\\
James Clerk Maxwell Building\\
The King's Buildings\\
 Peter Guthrie Tait Road\\
Edinburgh\\ 
EH9 3FD\\United Kingdom} 

\email{hiro.oh@ed.ac.uk}

\address{
Oana Pocovnicu\\
Department of Mathematics, Heriot-Watt University and The Maxwell Institute for the Mathematical Sciences, Edinburgh, EH14 4AS, United Kingdom}
\email{o.pocovnicu@hw.ac.uk}

\subjclass[2010]{35Q55}

\keywords{nonlinear Schr\"odinger equation; almost sure well-posedness; power series expansion}

\begin{abstract}

We consider the  cubic nonlinear Schr\"odinger equation (NLS) on $\R^3$
with randomized initial data.
In particular, we study an iterative approach based on a partial power series expansion
in terms of the random initial data.
By performing a fixed point argument around 
the second order expansion, 
we improve the regularity threshold for almost sure local well-posedness
from our previous work \cite{BOP2}.
We further investigate  a  limitation
of  this iterative procedure. 
Finally, we introduce an alternative iterative approach, 
based on a modified   expansion of arbitrary length, 
and prove almost sure local well-posedness of  the cubic NLS 
in an almost optimal regularity range
with respect to the original iterative approach based
on a  power series expansion.

\end{abstract}


%
\maketitle
\tableofcontents

\baselineskip = 14pt

\section{Introduction}
\subsection{Nonlinear Schr\"odinger equation}
We consider
the Cauchy problem of the  cubic nonlinear Schr\"odinger equation (NLS)
on $\R^3$:\footnote{Our discussion in this paper
can be easily adapted to the cubic NLS on $\R^d$
(and to other nonlinear dispersive PDEs).
For the sake of concreteness, however, we only consider the $d = 3$ case in the following.
See also the comment after Theorem \ref{THM:1}
for our particular interest in the three-dimensional problem.}
\begin{equation}
\begin{cases}\label{NLS1}
i \partial_t u + \Delta u = |u|^2 u  \\
u|_{t = 0} = u_0 \in H^s(\R^3),
\end{cases}
\qquad ( t, x) \in \R \times \R^3.
\end{equation}

\noi
The cubic NLS \eqref{NLS1} has been studied
extensively from both the theoretical and applied points of view.
In this paper, 
we continue our study in \cite{BOP1, BOP2}
and further investigate   the probabilistic well-posedness of \eqref{NLS1} 
with  random and   rough initial data.

Recall that the equation \eqref{NLS1}
is scaling critical in $\dot H^\frac{1}{2}(\R^3)$
in the sense that the scaling symmetry:
\begin{align}
u(t, x) \longmapsto \ld u (\ld^2 t, \ld x)
\label{scaling}
\end{align}
%
preserves  the homogeneous $\dot{H}^{\frac 12 }$-norm
(when it is applied to functions only of $x$).
It is known that the Cauchy problem \eqref{NLS1}
is locally well-posed in $H^s(\R^3)$ for $s \geq \frac 12$
\cite{CW}.
See also \cite{CKSTT2, HR1, KM2, Dodson} for 
partial\footnote{Namely, either for smoother initial data 
or under an extra hypothesis.} global well-posedness and scattering results.
On the other hand, 
\eqref{NLS1} is 
known to be ill-posed in $H^s(\R^3)$ for $s < \frac 12$~\cite{CCT}.
In \cite{BOP2}, 
we studied the probabilistic well-posedness property
of~\eqref{NLS1} below the scaling critical regularity $s_\text{crit} = \frac 12$
under a suitable randomization of initial data;
see  \eqref{rand} below.
In particular, we 
 proved that \eqref{NLS1} is almost surely locally well-posed
 in $H^s(\R^3)$, $s > \frac 14$.
Our main goal in this paper
is to introduce an iterative procedure to improve this regularity
threshold for almost sure local well-posedness.
Furthermore, we
study  a critical regularity associated with this iterative procedure.
By introducing a modified iterative approach,
we then prove almost sure local well-posedness
of \eqref{NLS1} in an almost optimal range 
with respect to the original iterative procedure (Theorem \ref{THM:2}).
Beyond the concrete results in this paper, we believe 
that the iterative procedures based on
(modified) partial power series expansions are 
themselves of  interest
for further development in the well-posedness theory
of dispersive equations with random initial data
and/or random forcing.

Such a probabilistic construction of solutions to dispersive PDEs
first appeared in the works 
by McKean~\cite{McKean} and Bourgain \cite{BO96}
in the study of invariant Gibbs measures for the cubic NLS on $\T^d$, $d = 1, 2$.
In particular, they established almost sure local well-posedness
with respect to particular random initial data, basically corresponding to the Brownian motion.
(These local-in-time solutions
were then extended globally in time by invariance
of the Gibbs measures.
In the following, however, we restrict our attention to local-in-time solutions.)
In \cite{BT1}, 
Burq-Tzvetkov further elaborated this idea 
and  considered a randomization 
for any rough initial condition 
via the Fourier series expansion.
More precisely, 
they studied the cubic nonlinear wave equation (NLW)
on a three-dimensional compact Riemannian manifold
below the scaling critical regularity.
By introducing  a randomization 
via the multiplication of the Fourier coefficients by independent random variables, 
they  established almost sure local well-posedness 
below the critical regularity. 
Such randomization via the Fourier series expansion
is natural 
on compact domains \cite{CO, NS}
and more generally
in situations where the associated elliptic operators have discrete spectra~\cite{Thomann, PRT}. 
See \cite{BOP2, BOP4} for more references therein.

In the following, 
we  go over a randomization suitable for our problem on $\R^3$.
Recall 
the Wiener decomposition of the frequency space  \cite{W}:
$\R^3_\xi = \bigcup_{n \in \Z^3} (n+ (-\frac 12, \frac12]^3)$.
We employ a randomization adapted to this Wiener decomposition.
Let $\psi \in \mathcal S(\R ^3)$ satisfy
\[
\supp \psi \subset [-1,1]^3 
\qquad \text{and}\qquad \sum _{n \in \mathbb{Z}^3} \psi (\xi -n) =1 \quad \text{for any $\xi \in \R ^3$}.
\]

\noi
Then, given a function $\phi$ on $\R^3$, we have
\[ \phi =  \sum _{n \in \mathbb{Z}^3} \psi (D-n) \phi.\]

\noi
We  define the Wiener randomization $\phi^\o$ of $\phi$ by
\begin{equation} \label{rand}
\phi ^{\omega} := \sum _{n \in \mathbb{Z}^3} g_n (\o) \psi (D-n) \phi,
\end{equation}

\noi
where $\{ g_n \}_{n \in \Z^3} $ is a sequence of independent mean-zero complex-valued random variables 
on a probability space $(\Omega , \mathcal{F} ,P)$.
The randomization \eqref{rand}
based on the Wiener decomposition of the frequency space
is natural in view of 
time-frequency analysis;
 see  \cite{BOP1} for a further discussion.
Almost simultaneously with~\cite{BOP1}, 
L\"uhrmann-Mendelson~\cite{LM} also considered 
a randomization of the form \eqref{rand} 
 (with cubes being substituted by appropriately localized balls)
in the study of NLW on $\R^3$.
For a similar randomization used in the study of the Navier-Stokes equations, 
see also the work of Zhang and Fang \cite{ZF}, preceding \cite{BOP1, LM}.
In \cite{ZF},  the decomposition of the frequency space
is not explicitly provided
but their randomization certainly includes
randomization based on the decomposition of the frequency space
by unit cubes or balls.

In the following, 
we assume that 
the  probability distribution $\mu _n$  of $g_n$ satisfies the following exponential moment bound:
\begin{align}
\int _{\R^2} e^{\kappa \cdot x} d \mu _{n} (x) \le e^{c |\kappa| ^2}
\label{exp1}
\end{align}

\noi
for all $\kappa \in \R^2$ and $n \in \mathbb{Z}^3$.
This condition is satisfied by the standard complex-valued Gaussian random variables
 and the uniform distribution on the circle in the complex plane.

We now recall 
 the almost sure local well-posedness result 
from \cite{BOP2} which is of interest to us.\footnote{For simplicity, 
we only consider positive times.
By the time reversibility of the equation, 
the same analysis applies to negative times.}

\begin{oldtheorem} \label{THM:A}
Let $\frac 14 < s < \frac{1}{2}$.
Given $\phi \in H^s (\R ^3)$, let $\phi^{\omega}$ 
be its Wiener randomization defined in \eqref{rand}.
Then, 
the Cauchy problem \eqref{NLS1} is almost surely locally well-posed
with respect to the random initial data $\phi^\o$.
More precisely,
there exists a set $\Si = \Si(\phi) \subset \O$ 
with $P(\Si) = 1$ such that, for any $\o \in \Si$, there exists
a unique function  $u = u^\o$ in the class:
\begin{align*}
S(t)\phi^\o + X^\frac{1}{2}([0, T]) \subset
 S(t) \phi ^{\omega} + C([0,T] ; H^\frac{1}{2}(\R^3)) \subset C([0,T] ; H^s(\R^3))
\end{align*}

\noi
with $T = T(\phi, \o) >0$ 
such that $u$ is a solution to \eqref{NLS1} on $[0, T]$.
\end{oldtheorem}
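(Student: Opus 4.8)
\emph{Sketch of proof.}
The plan is to build the solution in the form $u = S(t)\phi^\o + v$, where $z := S(t)\phi^\o$ is the rough random linear evolution and $v$ is a smoother nonlinear remainder. With $\mathcal{N}(u) := |u|^2u$ and $v|_{t=0}=0$, the Duhamel formulation for $v$ reads
\[
v(t) = -i\int_0^t S(t-t')\,\mathcal{N}\big(z+v\big)(t')\,dt'\,,
\]
and I would solve it by a contraction mapping argument for $v$ in a ball of $X^{\frac12}([0,T])$, a Fourier restriction norm (equivalently, a $V^2$-adapted) space at the scaling-subcritical regularity $s=\frac12$. The standard properties of such a space on a finite interval are used throughout: the homogeneous and inhomogeneous linear estimates, in particular $\big\|\int_0^t S(t-t')F\,dt'\big\|_{X^{\frac12}([0,T])}\les\|F\|_{N^{\frac12}([0,T])}$ for the associated dual-type space $N^{\frac12}$, the full family of Strichartz estimates for elements of $X^{\frac12}$, and the embedding $X^{\frac12}([0,T])\embeds C([0,T];H^{\frac12}(\R^3))$. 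Since the deterministic well-posedness theory already covers $s=\frac12$, everything below $\frac12$ has to come from the randomization of~$\phi$.

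The probabilistic input is a family of improved Strichartz estimates for $z = S(t)\phi^\o$: although $\phi^\o\in H^s(\R^3)$ only, the Wiener randomization decouples the unit-frequency pieces, so that a square function (Khintchine) estimate together with the unit-scale dispersive bound for $S(t)$ places $z$, for a.e.\ $\o$, in a family of space-time norms strictly beyond the deterministic Strichartz range available at regularity $s$, with a quantitative $T^{\theta}$-gain as $T\to 0$. By the exponential moment bound \eqref{exp1}, these bounds hold with the required constants outside an event whose probability tends to $0$ faster than any power of $T$ as $T\to0$; a Borel--Cantelli argument along a sequence $T\to0$ then produces the full-measure set $\Si=\Si(\phi)\subset\O$ on which the remainder of the argument is carried out, with $T=T(\phi,\o)>0$.

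Next I would expand $\mathcal{N}(z+v)$ into its eight multilinear constituents $\mathcal{N}(w_1,w_2,w_3)$ with $w_j\in\{z,v\}$ and establish a trilinear estimate bounding each of them in $N^{\frac12}([0,T])$ by $T^{\theta}\big(\|z\|_{\ast}+\|v\|_{X^{\frac12}([0,T])}\big)^3$, where $\|\cdot\|_{\ast}$ stands for the relevant probabilistic Strichartz norm of $z$. The terms containing two or three factors of $v$ are handled by the deterministic Strichartz estimates for elements of $X^{\frac12}$ (plus a power of $T$ from H\"older in time); the terms containing at least one factor of $z$ must invoke the probabilistic bounds for those factors, and it is precisely there that the loss of regularity of $\phi$ below $\frac12$ has to be compensated. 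Proving these trilinear estimates is the heart of the matter and the main obstacle: a naive pairing of probabilistic Strichartz norms does not suffice --- the half-derivative in the $N^{\frac12}$-norm may land on a rough, high-frequency factor of $z$ --- so one must combine the probabilistic estimates with bilinear Strichartz (Fourier restriction) refinements, which exploit oscillation between two factors at separated frequencies to recover the missing derivative, together with a case analysis over the relative sizes of the frequencies of the three inputs and the output. A bookkeeping of the resulting sums shows that the estimates close precisely for $s>\frac14$; in particular, the purely random contribution $\mathcal{N}(z)=|z|^2z$ already requires $s>\frac14$, which accounts for the regularity threshold in the statement.

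Finally, collecting these estimates shows that the map $\Gamma v := -i\int_0^t S(t-t')\,\mathcal{N}(z+v)\,dt'$ sends the ball $\{\,\|v\|_{X^{\frac12}([0,T])}\le R\,\}$ into itself and is a contraction there, for $R\sim\|\mathcal{N}(z)\|_{N^{\frac12}([0,T])}$ and $T=T(\phi,\o)$ sufficiently small, on the event $\Si$. Its unique fixed point $v\in X^{\frac12}([0,T])$ yields the desired solution $u = S(t)\phi^\o+v$ in the asserted class, with continuity into $H^s$ following from $S(t)\phi^\o\in C([0,T];H^s)$ together with $X^{\frac12}([0,T])\embeds C([0,T];H^{\frac12})\embeds C([0,T];H^s)$; uniqueness in the class is obtained by applying the same trilinear estimates to the difference of two solutions.
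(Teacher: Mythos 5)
Your proposal follows essentially the same route as the paper's proof (carried out in detail in \cite{BOP2} and summarized around \eqref{v1}--\eqref{NLS3}): the decomposition $u = S(t)\phi^\omega + v$, the fixed point argument for $v$ in $X^{1/2}([0,T])$ built on the $U^2/V^2$-type spaces, the combination of the probabilistic Strichartz estimates (Lemma~\ref{LEM:PStr}) with the bilinear Strichartz refinement (Lemma~\ref{LEM:biStr}) in a case-by-case trilinear analysis, and the identification of the purely stochastic cubic $|z_1|^2z_1$ (i.e.\ $z_3$, via Proposition~\ref{PROP:Z3}) as the term enforcing $s>\frac14$. The only cosmetic deviation is your appeal to Borel--Cantelli, whereas the paper simply takes $\Si=\bigcup_{0<T\ll1}\Si_T$ with $P(\Si_T^c)$ exponentially small in $1/T$ (Remark~\ref{REM:uniq1}), a direct union argument yielding the same full-measure set.
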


Here,  $S(t) := e^{it\Dl}$ denotes the linear Schr\"odinger operator
and the function space $X^\frac{1}{2}([0, T]) \subset C([0, T]; H^\frac{1}{2}(\R^3))$ 
is defined in Section \ref{SEC:2} below.
Note that the uniqueness statement of Theorem \ref{THM:A} in the class: 
\begin{align}
S(t)\phi^\o + X^\frac{1}{2}([0, T])
\label{uniq1}
\end{align}

\noi
is to be interpreted as follows;
by setting $v = u - S(t) \phi^\o$ (see \eqref{v1} below), 
 uniqueness for the residual term $v$ holds in $X^\frac{1}{2}([0, T])$.
 See also Remark \ref{REM:uniq1} below.

Recall that while the Wiener randomization~\eqref{rand} does not improve differentiability, 
 it  improves integrability (see Lemma~4 in \cite{BOP1}).
See \cite{PZ,  Kahane, BT1}
for the corresponding statements in the context of the 
 random Fourier series. 
The main idea for proving Theorem \ref{THM:A} 
is to exploit this gain of integrability.
More precisely, 
let
\begin{align}
z_1(t)  := S(t) \phi ^{\omega}
\label{introZ1}
\end{align}

\noi
 denote the random linear solution
with $\phi^\o$ as initial data
and write
\begin{align}
u = z_1 + v.
\label{v1}
\end{align}
	
\noi
Then, 	
we see that
$v: = u - z_1$
satisfies the following perturbed NLS:
\begin{equation}
\begin{cases}
	 i \dt v + \Dl v =  |v + z_1|^2(v+z_1)\\
v|_{t = 0} = 0,
 \end{cases}
\label{NLS2}
\end{equation}

\noi
where $z_1$ is viewed as a given (random)  source term.
The main point is that the gain of space-time
integrability of the random linear solution $z_1$ (Lemma \ref{LEM:PStr})
makes this problem {\it subcritical}\footnote{The scaling critical Sobolev regularity $s_\text{crit} = \frac 12$
for \eqref{NLS1}
is defined by the fact that the $\dot H^\frac12$-norm remains invariant under 
the scaling symmetry \eqref{scaling}.
Given $1 \leq r \leq \infty$, we can also define  
the scaling critical Sobolev regularity $s_\text{crit}(r)$
in terms of the $L^r$-based Sobolev space $\dot W^{s, r}$.
A direct computation gives 
$s_\text{crit}(r) = \frac 3r - 1\,  (\to -1$ as $r\to \infty$).
In particular,  the gain of integrability of the random linear solution $z_1$
stated in Lemma \ref{LEM:PStr}
implies that $z_1$ in \eqref{NLS2} gives rise only to a subcritical perturbation.
See \cite{BOP4} for a further discussion on this issue.

Note, however, 
that if  we consider a non-zero initial condition for $v$, 
then the critical nature of the equation comes into play through
the initial condition.  See Remark \ref{REM:v_0}. This plays an important role in studying 
the global-in-time behavior of solutions.
See, for example, \cite{OOP}.}
and hence we can solve it by a standard fixed point argument.
Over the last several years, there have been 
many results on probabilistic  well-posedness of nonlinear dispersive PDEs, 
using this change of viewpoint.\footnote{In the field
of stochastic parabolic PDEs, this change of viewpoint
and solving the fixed point problem for the residual term $v$ is called
the Da Prato-Debussche trick \cite{DPD, DPD2}.}
See, for example,   \cite{McKean, BO96, BT1,  CO, BT3, NS,  LM, BOP1, BOP2, POC, OP, HO16, 
 LM2,    Bre, KMV, OOP}.
In \cite{BOP2}, 
we studied  the Duhamel formulation for~\eqref{NLS2}:
\begin{equation}
v(t) = - i \int_0^t S(t-t')	  |v + z_1|^2(v+z_1)(t') dt',
\label{NLS3}
\end{equation}

\noi
by carrying  out  case-by-case analysis
on terms of the form $v \cj v v$, $v \cj v z_1$, $v \cj z_1 z_1$, etc.~in $X^\frac{1}{2}([0, T])$.
The main tools were 
(i)  the gain of space-time integrability of $z_1$
and (ii)~the bilinear refinement of the Strichartz estimate (Lemma \ref{LEM:biStr}).
This yields Theorem \ref{THM:A}.

By examining the case-by-case analysis in \cite{BOP2}, 
we see that 
the regularity restriction $s >  \frac 14$ in Theorem \ref{THM:A}
comes from 
the cubic interaction 
of the random linear solution:
\begin{align}
  z_3(t)  := -i  \int_0^t S(t - t') |z_1|^2 z_1(t') dt'. 
\label{introZ3}
\end{align}

\noi
See Proposition \ref{PROP:Z3} below.
In the following, we discuss
an iterative approach to lower this regularity threshold
by studying further expansions
in terms of the random linear solution~$z_1$.
We will also discuss  
the limitation of this iterative procedure.

\begin{remark}\label{REM:uniq1}\rm
(i) 
The proof of Theorem \ref{THM:A}, presented in \cite{BOP2}, is based on a standard contraction argument for the residual term $v = u - S(t) \phi^\o$
 in a ball $B$ of radius  $O(1)$ in $X^\frac{1}{2}([0, T])$.
As such, the argument in \cite{BOP2}
only yields uniqueness of $v$ in the ball $B$.
Noting that 
$v \in C([0, T]; H^{\frac12}(\R^3)) \cap X^\frac{1}{2}([0, T])$,\footnote{By convention, 
our definition of 
the $X^s([0, T])$-space already assumes that functions in 
$X^s([0, T])$ belong to 
$ C([0, T]; H^s(\R^3))$.
See Definition \ref{DEF:X3} below.}
it follows from  Lemma A.8  in \cite{BOP2} that 
the $X^\frac{1}{2}([0, t])$-norm of $v$ is continuous in $t \in [0, T]$.
Then, by possibly shrinking the local existence time $T>0$, 
we can easily upgrade the uniqueness of $v$ in the ball $B$
to uniqueness of $v$ in the entire $X^\frac{1}{2}([0, T])$.
Namely, uniqueness of $u$ holds in the class \eqref{uniq1}.

\smallskip

\noi
(ii) With the exponential moment assumption \eqref{exp1}, 
the proof of Theorem \ref{THM:A}, presented in~\cite{BOP2}, 
allows us to conclude that
the set $\Si$ of full probability in Theorem \ref{THM:A} has the following decomposition:
\[ \Si = \bigcup_{0 < T \ll 1} \Si_T\]

\noi
such that
\begin{itemize}
\item[(a)] there exist $c, C >0$ and $C_0 = C_0 (\|\phi\|_{H^s})>0$ such that 
\[P(\Si_T^c) < C \exp\Big(-\frac{C_0(\|\phi\|_{H^s})}{T^c  }\Big)\]

\noi
 for each $0 < T \ll 1$, 

\smallskip

\item[(b)] for each $\o \in \Si_T\subset \Si$, $0 < T \ll 1$, 
the function $u = u^\o$ constructed in Theorem \ref{THM:A}
is a solution to \eqref{NLS1} on $[0, T]$.

\end{itemize}
\noi
See the statement of Theorem 1.1 in \cite{BOP2}.
A similar decomposition of the set $\Si$ of full probability
 applies to Theorems \ref{THM:1} and \ref{THM:2} below.
We, however, do not state it in an explicit manner in the following.
\end{remark}

\subsection{Improved almost sure local well-posedness}

Let us first state the following proposition on the regularity
property of the cubic term $z_3$ defined above.

\begin{proposition}\label{PROP:Z3}
Given $0 \leq s < 1$, 
 let $\phi^\o$ be the Wiener randomization 
of  $\phi \in H^s(\R^3)$
 defined in \eqref{rand}
 and set $z_1 = S(t) \phi^\o$.

\smallskip

\noi
\textup{(i)}
For any $\s < 2s$, we have
\[z_3   \in X^\s_\textup{loc}\subset C(\R; H^\s(\R^3))\]

\noi
almost surely.
More precisely,  there exists an almost surely finite constant $ C(\o, \|\phi\|_{H^s} ) > 0$ and $\theta > 0$
such that 
\begin{align}
\| z_3 \|_{X^{\s}([0, T])} = \bigg\| \int_0^t S(t - t') |z_1|^2 z_1(t') dt'\bigg\|_{X^\s([0, T])}
\leq T^\theta C(\o,  \|\phi\|_{H^s}) 
\label{Z3_2}
\end{align}

\noi
for any $T > 0$.
In particular, by taking $\s = 2s-\eps$ for small $\eps > 0$, \eqref{Z3_2} shows that 
the second order\footnote{Here, we referred to $z_3$ as the second order term since it corresponds to the second order term
appearing in the (formal) power series expansion of solutions to \eqref{NLS1}
in terms of the random initial data.
For the same reason, we refer to $z_5$ in \eqref{introZ5} and $z_7$ 
in \eqref{introZ7}
as the third and fourth order terms
in the following.  See Subsection \ref{SUBSEC:1.3} below.} term~$z_3$ is smoother 
\textup{(}by $s - \eps$\textup{)} than 
the first order term $z_1$, provided $s > 0$.

\smallskip

\noi
\textup{(ii)}
When $s = 0$, there is no smoothing in the second order
 term $z_3$
in general; 
there exists $\phi \in L^2(\R^3)$ such that the estimate~\eqref{Z3_2} with $\s = \eps > 0$ 
fails  for any $\eps > 0$.

\end{proposition}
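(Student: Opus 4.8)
I would prove the two parts separately; the substance is in part \textup{(ii)}.

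\textbf{Part \textup{(i)}.}
The plan is to reduce the bound \eqref{Z3_2} to a trilinear space--time estimate on the Duhamel integrand and then rerun the frequency-localized analysis of \cite{BOP2} with the output regularity exponent $\sigma$ kept free. By the definition of the solution space $X^\sigma$ in Section~\ref{SEC:2} and the standard estimate for the Duhamel operator, it suffices to bound $|z_1|^2 z_1$ in the companion ($V^2$-type) space $N^\sigma([0,T])$, which by duality amounts to controlling $\int_0^T \int_{\R^3} |z_1|^2 z_1 \cdot \cj w \,dx\,dt$ uniformly over unit-size test functions $w$ at regularity $-\sigma$. I would then Littlewood--Paley decompose the three copies of $z_1$ and $w$, and, using the symmetry of the expression, order the input frequencies $N_1 \ges N_2 \ges N_3$, so that the output frequency obeys $N_4 \les N_1$. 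In each frequency configuration I would pair the two highest-frequency factors (or the top input with $w$) through the bilinear refinement of the Strichartz estimate, Lemma~\ref{LEM:biStr}, which gains a power of the ratio of the two frequencies at no derivative cost, and control the two remaining factors with the probabilistic improved Strichartz estimate for $z_1$, Lemma~\ref{LEM:PStr}, extracting the factor $T^\theta$ by H\"older's inequality in time (the improved Strichartz norms being $L^p_t L^q_x$ with $p<\infty$). On summing the dyadic pieces, the gain from the bilinear estimate together with the $s$ derivatives carried by each of the three copies of $z_1$ outweighs the $\sigma$ derivatives forced onto the output exactly when $\sigma < 2s$, which is the origin of the restriction; almost-sure finiteness of $C(\o,\norm{\phi}_{H^s})$ follows from Lemma~\ref{LEM:PStr} and the exponential moment bound \eqref{exp1}. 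This step is essentially bookkeeping given the tools already available, and is not where I expect the difficulty to lie.

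\textbf{Part \textup{(ii)}.}
Here the plan is to construct $\phi \in L^2(\R^3)$ for which the cubic self-interaction of $z_1$ gains no positive regularity, by concentrating $\ft\phi$ on a sparse sequence of small, mutually far-apart clusters of unit frequency cubes. Fix a rapidly increasing sequence $N_k \to \infty$, say $N_k = 2^{2^k}$, and lattice points $m_k$ with $|m_k| \sim N_k$, and let $\ft\phi$ be supported on, and roughly equal to a constant $a_k$ on, a cluster $\mathcal C_k$ obtained by translating a fixed finite configuration of cubes (of bounded diameter $D$, with at least three cubes) by $m_k$. Choosing $|a_k|^2 \sim (\log N_k)^{-1}$ makes $\phi \in L^2(\R^3)$, since $\sum_k |a_k|^2 \les \sum_k 2^{-k} < \infty$. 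The reason to use a fixed bounded configuration is this: for the self-interaction of a single cluster $\mathcal C_k$, the resonance function
\[
\Phi(\xi_1,\xi_2,\xi_3) := |\xi|^2 - (|\xi_1|^2 - |\xi_2|^2 + |\xi_3|^2) = 2(\xi_1-\xi_2)\cdot(\xi_3-\xi_2), \qquad \xi := \xi_1 - \xi_2 + \xi_3,
\]
satisfies $|\Phi| \les D^2$ uniformly in $k$, because the translation $m_k$ cancels in the differences $\xi_i - \xi_j$; consequently, fixing $t_0 \les D^{-2}$ small, the time integral $\int_0^{t_0} e^{it'\Phi}\, dt'$ has modulus $\ges t_0$ with no oscillatory cancellation. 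Expanding $z_3$ in Fourier variables, its contribution on the output region near $m_k$ is a trilinear random sum $\sim t_0\, a_k^3 \sum_{n_1,n_2,n_3 \in \mathcal C_k} g_{n_1} \cj{g_{n_2}} g_{n_3}(\cdots)$; since terms with distinct indices are orthogonal in $L^2(\O)$, and are moreover orthogonal to every cross-cluster contribution landing in the same region, one gets $\E \norm{P_{N_k} z_3(t_0)}_{L^2_x}^2 \ges t_0^2 |a_k|^6$, which decays only logarithmically in $N_k$.

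Finally I would pass to an almost-sure statement. For each $k$, the quantity $Y_k := \norm{P_{N_k} z_3(t_0)}_{L^2_x}^2$ is a polynomial of bounded degree in the finitely many independent random variables $\{g_n : n \in \mathcal C_k\}$, which have exponential moments by \eqref{exp1}, so hypercontractivity gives $\E Y_k^2 \les (\E Y_k)^2$, and the Paley--Zygmund inequality gives $P(Y_k \ge \tfrac12 \E Y_k) \ge p_0 > 0$ with $p_0$ independent of $k$. Since the clusters $\mathcal C_k$ are pairwise disjoint, these events are independent, so by the Borel--Cantelli lemma, almost surely $Y_k \ges t_0^2 |a_k|^6 \sim t_0^2 (\log N_k)^{-3}$ for infinitely many $k$; summing over dyadic $N$,
\[
\sum_{N} N^{2\eps} \norm{P_N z_3(t_0)}_{L^2_x}^2 \ \ges\ \sum_k N_k^{2\eps} (\log N_k)^{-3} \ =\ \infty \qquad \text{almost surely}
\]
for every $\eps > 0$. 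Hence $z_3(t_0) \notin H^\eps(\R^3)$ almost surely, and since $X^\eps([0,T]) \embeds C([0,T]; H^\eps(\R^3))$ for any $T > t_0$, the bound \eqref{Z3_2} with $\sigma = \eps$ cannot hold with an almost surely finite constant. The main obstacle is in part \textup{(ii)}: exhibiting a frequency profile that genuinely lies in $L^2$ while its cubic self-interaction suffers no oscillatory cancellation at every scale --- equivalently, certifying that the time integral built from $\Phi = 2(\xi_1-\xi_2)\cdot(\xi_3-\xi_2)$ provides no smoothing gain; the sparse bounded-cluster construction above is designed precisely so that $|\Phi|$ stays bounded on each cluster, which reduces this point to an elementary observation.
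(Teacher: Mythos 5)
For Part \textup{(i)}, your plan replicates the paper's proof: dualize via Lemma~\ref{LEM:lin}, perform a Littlewood--Paley decomposition, and in each frequency configuration pair the two separated high frequencies through the bilinear refinement Lemma~\ref{LEM:biStr} while controlling the remaining factors via the improved space--time integrability of $z_1$ from Lemma~\ref{LEM:PStr}; summing the dyadic pieces gives $\sigma < 2s$. This is exactly the paper's argument, and what you describe as ``bookkeeping'' is the case-by-case analysis carried out there.

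For Part \textup{(ii)}, your route is genuinely different from the paper's, and more ambitious. The paper's counterexample is purely deterministic: for each $N$, $\ft\phi$ is a sum of three small $\lambda$-boxes placed inside a \emph{single} Wiener cube $Ne_1+Q$, so the randomization acts on all three by the same $g_{Ne_1}$ and can be normalized away. The near-resonance of the phase then yields $\|z_3\|_{X^\sigma([0,1])} \gtrsim \lambda^{15/2}N^\sigma$ while $\|\phi\|_{L^2}\sim\lambda^{3/2}$ stays fixed, and letting $N\to\infty$ disproves the uniform bound --- no stochastic tools are used. You instead aim to build one fixed $\phi\in L^2$ with sparse clusters and show $z_3(t_0)\notin H^\eps$ almost surely, which would be a stronger conclusion.

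Your argument has a concrete gap in the Borel--Cantelli step. You claim the events $\{Y_k \geq \tfrac12\E Y_k\}$ are independent ``since the clusters $\mathcal C_k$ are pairwise disjoint,'' but $Y_k = \|P_{N_k}z_3(t_0)\|_{L^2_x}^2$ does \emph{not} depend only on $\{g_n : n \in \mathcal C_k\}$. For every $j \neq k$, triples $\xi_1\in\mathcal C_k$, $\xi_2,\xi_3\in\mathcal C_j$ produce output $\xi = \xi_1-\xi_2+\xi_3$ within $D$ of $\mathcal C_k$ (since $|\xi_3-\xi_2|\le D$), hence contribute to $Y_k$ through $g_n$'s from $\mathcal C_j$; and this cannot be avoided by any choice of $m_k$, because the constraint $\xi_1-\xi_2+\xi_3\approx m_k$ with $\xi_1\in\mathcal C_k$, $\xi_2,\xi_3\in\mathcal C_j$ always has a two-parameter family of solutions near the diagonal $\xi_2\approx\xi_3$. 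Nor are these contributions negligible: the phase $\Phi = 2(\xi_3-\xi_2)\cdot(\xi_1-\xi_2)$ need not be large, since $\xi_1 - \xi_2 \approx m_k - m_j$ is large but $\xi_3 - \xi_2$ can be nearly orthogonal to $m_k-m_j$, in which case $|\Phi| = O(D^2)$ and the time integral provides no gain (for a spread-out cluster such orthogonal pairs exist, and with the weights $|a_j|^2\sim(\log N_j)^{-1}$ the resulting resonant cross-cluster pieces scale like $a_k\sum_j a_j^2 \sim a_k$, which is \emph{larger} than the within-cluster piece $a_k^3$). So $Y_k$ depends materially on all the $g_n$'s, the claimed independence fails, and the converse Borel--Cantelli lemma does not apply. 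Fixing this would require either a cluster geometry that provably forbids all near-resonant cross interactions, or a decomposition $Y_k = Y_k^{\mathrm{main}} + E_k$ with quantitative control of the error and a conditional zero--one argument; neither is carried out. The paper sidesteps all of this by working with a one-cluster-per-$N$ family where the randomization is trivial. A secondary point: the random monomials $g_{n_1}\cj{g_{n_2}}g_{n_3}$ are not pairwise orthogonal in $L^2(\Omega)$ across \emph{all} distinct triples once repetitions occur (e.g.\ $g_{n_1}|g_{n_2}|^2$ and $g_{n_1}|g_{n_2'}|^2$ are positively correlated), so the lower bound on $\E Y_k$ requires a genuine Wiener-chaos decomposition rather than the blanket orthogonality you invoke.
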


In \cite{BOP2}, 
we already proved \eqref{Z3_2} when $\s = \frac 12$ and $s > \frac 14$,
giving the regularity restriction in Theorem \ref{THM:A}.
See Section \ref{SEC:Z3} for the proof of Proposition~\ref{PROP:Z3}
for a general value of $\s$.
In the proof of Theorem \ref{THM:A},
in order to carry out the case-by-case analysis for \eqref{NLS3}, 
we need to have $z_3 \in X^\frac{1}{2}_\text{loc}$,
(where we have a deterministic local well-posedness for \eqref{NLS1}).
In view of 
Proposition~\ref{PROP:Z3}, 
this imposes the regularity restriction $s > \frac 14$. 
Note, however, that even when $s \leq  \frac 14$, 
 $z_3$ is still a well defined space-time function of spatial regularity $2s- < \frac 12 $.
This motivates us 
to consider the following second order expansion:
\begin{align}
u = z_1 + z_3 + v
\label{v2}
\end{align}

\noi
and remove the second order  interaction $z_1 \cj z_1 z_1$.
Indeed,  the residual term $v := u - z_1 - z_3$ satisfies the following equation:
\begin{equation}
\begin{cases}
	 i \dt v + \Dl v =  \N(v + z_1+z_3) - \N(z_1)\\
v|_{t = 0} = 0,
 \end{cases}
\label{NLS4}
\end{equation}

\noi
where $\N(u) = |u|^2 u$.
In terms of the Duhamel formulation, we have
\begin{equation}
v(t) = - i \int_0^t S(t-t')\big\{\N(v + z_1 + z_3) - \N(z_1)\big\}(t') dt'.
\label{NLS5}
\end{equation}

\noi
Then, by studying the fixed point problem \eqref{NLS5} for $v$, 
we have the following improved almost sure local well-posedness of \eqref{NLS1}.

\begin{theorem}
\label{THM:1}
Given $ \frac 12 \leq  \s \leq  1$, 
let   $ \frac 25 \s < s < \frac 12$.
Given $\phi \in H^s(\R^3)$, let $\phi^\o$ be its Wiener randomization defined in \eqref{rand}.
Then, the cubic NLS \eqref{NLS1} on $\R^3$
is almost surely locally well-posed
with respect to the random initial data $\phi^\omega$.
More precisely,
there exists a set $\Si = \Si(\phi, \s) \subset \O$ 
with $P(\Si) = 1$ such that, for any $\o \in \Si$, there exists
a unique function  $u = u^\o$ in the class:
\begin{align*}
z_1  + z_3 
 + X^\s([0, T]) 
&   \subset
 z_1 + z_3 
+ C([0, T]; H^{\s} (\R^3)) 
\notag\\\
&  
\subset C([0, T];H^s(\R^3))
\end{align*}

\noi
with  $T = T(\phi, \o) >0$ 
such that $u$ is a solution to \eqref{NLS1} on $[0, T]$.

\end{theorem}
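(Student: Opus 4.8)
The plan is to solve the fixed point problem \eqref{NLS5} for the residual term $v$ in a ball of radius $O(1)$ in $X^\s([0,T])$, using the standard $X^\frac{1}{2}$-type machinery (linear and bilinear Strichartz estimates, Lemma \ref{LEM:biStr}) together with the probabilistic gain of integrability for $z_1$ (Lemma \ref{LEM:PStr}) and the gain of regularity for $z_3$ (Proposition \ref{PROP:Z3}). First I would expand the nonlinearity $\N(v+z_1+z_3) - \N(z_1)$ as a sum of cubic monomials. Schematically, these terms fall into three groups: (a) purely deterministic terms $v^3$ (i.e.\ products of three factors each being $v$, with appropriate conjugates); (b) mixed terms involving at least one $v$ and at least one of $z_1$ or $z_3$; and (c) terms with no $v$ at all, which, after the cancellation of $\N(z_1)=|z_1|^2 z_1$, are precisely the monomials containing at least one $z_3$ factor (the worst being $|z_1|^2 z_3$ and its variants, and then terms with more $z_3$'s). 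The key point is that the dangerous term $z_1\cj z_1 z_1$ that forced $s>\frac14$ in Theorem \ref{THM:A} has been removed by the second order expansion, so it no longer needs to be estimated.

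The heart of the argument is the case-by-case multilinear analysis in $X^\s([0,T])$. For group (a), the trilinear-in-$v$ estimate is the deterministic local theory of \eqref{NLS1} at regularity $\s\ge\frac12$ and contributes a factor $\|v\|_{X^\s}^3$ with a small power of $T$; this requires $\s \ge \frac12$, which is exactly the lower endpoint of the hypothesis. For group (b), each monomial has at least one $v\in X^\s$ and the remaining factors are drawn from $\{z_1, z_3\}$; here one uses the bilinear Strichartz refinement to pair $v$ with the highest-frequency random factor, placing the $z_1$-factors in the probabilistically improved spaces $L^p_{t,x}$ (from Lemma \ref{LEM:PStr}) and the $z_3$-factors in $X^\s_{\mathrm{loc}}$ via Proposition \ref{PROP:Z3} with $\s < 2s-\eps$. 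These terms are all controlled by $T^\theta$ times $\|v\|_{X^\s}$ to a positive power times almost surely finite random constants. For group (c), the monomials contain at least one $z_3$; the worst case is $z_3\cj z_1 z_1$ (with conjugates in various positions), and here one again pairs via bilinear Strichartz, using $z_3\in X^\s_{\mathrm{loc}}$ (so that $z_3$ contributes $s-\eps$ more derivatives than $z_1$) and $z_1$ in the integrability-improved spaces. The counting of derivatives: to place $v$ at regularity $\s$ on the left while the right side has a $z_3$ (regularity $2s-\eps$) together with lower-regularity $z_1$-factors whose roughness is absorbed by integrability, one needs, roughly, $\s < 2s - \eps + (\text{gain from bilinear Strichartz and from integrability of the two }z_1)$. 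Tracking the exponents carefully — this is where the constraint $s > \frac25\s$ emerges — yields the estimate closing the contraction.

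Assembling these, I obtain a map $\Gamma v(t) = -i\int_0^t S(t-t')\{\N(v+z_1+z_3)-\N(z_1)\}(t')\,dt'$ satisfying $\|\Gamma v\|_{X^\s([0,T])} \le T^\theta C(\o,\|\phi\|_{H^s})(1 + \|v\|_{X^\s}^3)$ and a matching difference estimate $\|\Gamma v_1 - \Gamma v_2\|_{X^\s([0,T])} \le T^\theta C(\o,\|\phi\|_{H^s})(1 + \|v_1\|_{X^\s}^2 + \|v_2\|_{X^\s}^2)\|v_1-v_2\|_{X^\s}$, valid on the event where the relevant random norms of $z_1$ and $z_3$ are finite — an event of full probability by Lemma \ref{LEM:PStr} and Proposition \ref{PROP:Z3}, and in fact one that decomposes into $\Si = \bigcup_{0<T\ll1}\Si_T$ with the exponential tail bound on $P(\Si_T^c)$ coming from the exponential moment hypothesis \eqref{exp1}, exactly as in Remark \ref{REM:uniq1}(ii). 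Choosing $T = T(\o,\|\phi\|_{H^s})$ small enough, $\Gamma$ is a contraction on a ball in $X^\s([0,T])$, producing a unique fixed point $v$; then $u = z_1 + z_3 + v$ solves \eqref{NLS1}, lies in the asserted class (the embeddings $X^\s \subset C([0,T];H^\s) $ and, since $\s \ge \frac12 > s$ while $z_1, z_3$ carry the $H^s$ regularity, $z_1+z_3+X^\s \subset C([0,T];H^s)$, using Proposition \ref{PROP:Z3} to see $z_3 \in C([0,T];H^\s)$), and uniqueness in the full space $X^\s([0,T])$ follows by the continuity-in-time argument of Remark \ref{REM:uniq1}(i). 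The main obstacle I anticipate is the bookkeeping in group (c): one must verify that every monomial containing a $z_3$ can be estimated with the available $2s-\eps$ derivatives on $z_3$ plus the bilinear-Strichartz and integrability gains, and that the resulting constraint is no worse than $s > \frac25\s$; in particular the borderline monomial $z_3\cj z_1 z_1$ with the conjugate placed so as to minimize the bilinear gain is what one should scrutinize most carefully, since it is this term (the analogue of the old $z_1\cj z_1 z_1$ bottleneck, now pushed one order up) that dictates the final threshold.
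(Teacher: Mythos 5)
Your overall architecture matches the paper's: center the contraction for $v$ in $X^\s([0,T])$, expand the nonlinearity into cubic monomials, cancel $|z_1|^2z_1$, identify $z_1\cj z_1 z_3$ (equivalently $z_5$) as the bottleneck, and close via bilinear Strichartz, $z_1\in L^p_{t,x}$, and Proposition~\ref{PROP:Z3}. You also correctly trace the constraint $s>\tfrac25\s$ to this worst monomial. However, there is one genuine missing ingredient, and it is not a mere bookkeeping detail.

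You propose to handle the $z_3$-factors exclusively by viewing $z_3\in X^{2s-}_{\mathrm{loc}}$ and using the admissible Strichartz norms it buys via Lemma~\ref{LEM:Str}. This is enough for the extremal case $z_1\cj z_1 z_3$, but it is \emph{not} enough for the monomials containing two or more $z_3$-factors (or a $z_3$ paired with $v$'s) in the bookkeeping regime of Theorem~\ref{THM:1}. Take the monomial $z_1\cj z_3 z_3$ with the two $z_3$'s at comparable high frequency $N_2\sim N_3\sim N_{\max}$, $z_1$ at low frequency $N_1\ll N_2^{1/2}$, and $w$ at high frequency: after one bilinear Strichartz pairing $z_{1,N_1}z_{3,N_2}$, you must still place the remaining $z_{3,N_3}$ in a high-integrability space like $L^5_{T,x}$. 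From $z_3\in X^{2s-}$ and admissible Strichartz, the best you can do is $\|z_{3,N_3}\|_{L^5_{T,x}}\les N_3^{1/2-2s+}\|z_{3,N_3}\|_{Y^{2s-}}$, paying the full $N_3^{1/2}$ Sobolev cost. Tracking the dyadic exponents then gives a constraint of the form $\s<\tfrac92s-\tfrac12$, which at $\s=\tfrac12$ forces $s>\tfrac29$, strictly worse than the claimed $s>\tfrac15$. So with only the tools you list, the threshold of Theorem~\ref{THM:1} is not reached.

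What is missing is the observation — Lemma~\ref{LEM:Z3_2} in the paper — that $z_3$, being a Duhamel integral of $|z_1|^2z_1$, itself enjoys an \emph{almost sure gain of space-time integrability} by giving up derivatives: using the dispersive estimate \eqref{O9} one reduces $\|\P_N z_3\|_{L^q_T L^r_x}$ to three copies of $z_1$ in probabilistically improved $L^p_{t,x}$ norms, which are a.s.\ finite by Lemma~\ref{LEM:PStr}, at essentially zero derivative cost. With this in hand, $\|\jb{\nb}^s z_{3,N}\|_{L^5_{T,x}}$ is controlled by an a.s.\ finite random constant (costing only $N^{-s}$ after projection, rather than $N^{1/2-2s}$), and the case-by-case analysis for $z_1 z_3 z_3$, $z_3z_3z_3$, $vz_3z_3$, $vvz_3$, $vz_3z_1$ closes with the milder restriction $\s<3s$, leaving $z_1\cj z_1 z_3$ as the sole binding case exactly as you anticipated. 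Without Lemma~\ref{LEM:Z3_2}, or some equivalent exploitation of the explicit multilinear structure of $z_3$ to upgrade its space-time integrability, the proposal as written does not reach $s>\tfrac25\s$. (A minor side remark: the $v\cj v v$ term at $\s=\tfrac12$ has no small power of $T$ — the equation is critical there — so the smallness must come from choosing the ball radius small, not from $T^\theta$; this is a standard point but worth stating correctly.)
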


As in Theorem \ref{THM:A}, 
the uniqueness of $u$ in the class
$z_1  + z_3 
 + X^\s([0, T])$
is to be interpreted as
uniqueness of the residual term $v = u - z_1 - z_3$
in $X^\s([0, T])$.
See also Remark \ref{REM:uniq1}.

By taking $\s = \frac 12$, Theorem \ref{THM:1} states that \eqref{NLS1} is almost surely 
locally well-posed in $H^s(\R^3)$, provided $s > \frac 15$.
This in particular improves 
the almost sure local well-posedness in  Theorem \ref{THM:A}.
On the other hand, 
by taking $\s = 1$,  Theorem \ref{THM:1} allows
us to construct a solution $v = u - z_1 - z_3 
\in X^1([0, T]) \subset C([0,T] ; H^1(\R^3))$, 
provided that $s > \frac 25$.
In particular, we can take random initial data below the scaling critical regularity $s_\text{crit} = \frac 12$,
while we construct the residual part $v$ in $X^1([0, T])$.
This opens up a possibility of studying the global-in-time behavior 
 of $v$, using the
(non-conserved) energy of $v$:
\begin{align*}
 E(v)(t) = \frac 12 \int_{\R^3} |\nb v(t, x)|^2 dx
+\frac {1}{4}  \int_{\R^3} |v(t, x)|^4 dx
\end{align*}

\noi
with random initial data below the scaling critical regularity.
We remark that 
by inspecting the argument in \cite{BOP2}, 
a modification of (the proof of) Theorem~\ref{THM:A}
yields almost sure local well-posedness of \eqref{NLS1}
in the class:
\[ S(t)\phi^\o + X^1([0, T]) \subset
 S(t) \phi ^{\omega} + C([0,T] ; H^1(\R^3))\]

\noi
only for $s > \frac 12$. (This restriction on $s$ can be easily seen
by setting $\s = 1$ in Proposition \ref{PROP:Z3}.)
In particular, Theorem~\ref{THM:A} does not allow
us to take random initial data below the scaling critical regularity $s_\text{crit} = \frac 12$
in studying the global-in-time behavior of $v \in X^1([0, T])$, 
namely at the level of the energy $E(v)$.
As our focus in this paper is the local-in-time analysis, 
we do not pursue further this issue 
on 
almost sure  global well-posedness of \eqref{NLS1} 
below the scaling critical regularity
 in this paper.
We, however, point out 
two  recent results \cite{KMV, OOP} on almost sure global well-posedness
 below the energy space
for  the defocusing energy-critical NLS in higher dimensions.

The main strategy for proving Theorem \ref{THM:1} is
to study the fixed point problem \eqref{NLS5}
by carrying out case-by-case analysis on 
\begin{align}
w_1\cj {w_2} w_3, 
\quad \text{for  $w_i = v,$ $z_1$, or $z_3$, $i = 1, 2, 3$,  but not all $w_i$ equal to $z_1$}
\label{case1}
\end{align}

\noi in $N^\s([0, T])$, where 
the dual norm is defined by 
\begin{align*}
\| F\|_{N^\s([0, T])} = \bigg\|\int_{0}^t S(t - t') F(t') dt'\bigg\|_{X^\s([0, T])}.
\end{align*}

\noi
Note that the number of cases has increased 
from the case-by-case analysis in  the proof of Theorem \ref{THM:A},
where we had $w_i = v$ or $z_1$.
One of the main ingredients is the smoothing on $z_3$
stated  in  Proposition~\ref{PROP:Z3} above.
Note, however, that 
in order to exploit  this smoothing,
we need to measure $z_3$ in the $X^{2s-}([0, T])$-norm,
which imposes a certain rigidity on the space-time integrability.\footnote{Namely, 
we need to  measure $z_3$ in $L^q_t([0, T]; W^{2s-, r}(\R^3))$
for admissible pairs $(q, r)$.  See Lemma \ref{LEM:Str}.}
In order to prove Theorem~\ref{THM:1}, 
we also need to exploit a gain of integrability on $z_3$.
In  Lemma \ref{LEM:Z3_2}, 
we use the dispersive estimate (see \eqref{O9} below) and the gain of integrability
on  each $z_1$ of the three factors in \eqref{introZ3}
and show that $z_3$ also enjoys a gain of integrability by giving up some differentiability.

\begin{remark}\label{REM:v_0}\rm
Given  $\phi \in H^s(\R^3)$, 
let  $\phi^\o$ be its Wiener randomization defined in \eqref{rand}.
Given  $v_0 \in H^\frac{1}{2}(\R^3)$, 
we can also consider  \eqref{NLS1} with the random initial data
of the form $u_0^\o = v_0 + \phi^\o$:
\begin{equation}
\label{NLS5a}
\begin{cases}
 i \dt u + \Dl  u = |u|^2 u\\
 u|_{t = 0} = v_0+ \phi^\o.
\end{cases}
\end{equation}

\noi
Then, by slightly modifying the proofs, 
we see that the analogues  of  Theorems~A and~\ref{THM:1} (with $\s = \frac 12$)
also hold
for \eqref{NLS5a}.
Namely, \eqref{NLS5a} is almost surely locally well-posed, 
provided $s > \frac 15$.
This amounts to considering the following Cauchy problems:
\begin{equation*}
\begin{cases}
	 i \dt v + \Dl v =  |v + z_1|^2(v+z_1)\\
v|_{t = 0} = v_0,
 \end{cases}
\label{NLS5b}
\end{equation*}

\noi
when $\frac 14 < s < \frac 12$
and
\begin{equation*}
\begin{cases}
	 i \dt v + \Dl v =  \N(v + z_1+z_3) - \N(z_1)\\
v|_{t = 0} = v_0,
 \end{cases}
\end{equation*}

\noi
when $\frac 15 < s \leq \frac 14$.
In this case, the critical nature of the  problem appears
through the $v \cj v v$ interaction 
in the case-by-case analysis \eqref{case1} 
due to 
 the deterministic (non-zero) initial data $v_0$ at the critical regularity.
The required modification is straightforward 
and thus we omit details.
See Proposition 6.3 in \cite{BOP2}
and Lemma 6.2 in~\cite{OOP}.
We point out that the discussion in the next subsection
also applies to \eqref{NLS5a}.

\end{remark}

\begin{remark}\label{REM:Z3_1}\rm

(i) 
Let $\I(u_1, u_2, u_3)$ denote  the trilinear operator  defined by 
\begin{align}
 \I(u_1, u_2, u_3) (t):= -i \int_0^t S(t - t') u_1 \cj {u_2} u_3 (t') dt'.
 \label{Duhamel1}
\end{align}

\noi
Then, 
for  $\s > 3s-1$, 
there is no finite constant $C > 0$ such that 
\begin{align}
\big\|\I( u_1, u_2, u_3)\big\|_{X^\s([0, 1])}
\leq  C \prod_{j = 1}^3 \| \phi_j  \|_{H^s}
\label{Z3_3}
\end{align}

\noi
for all $\phi_j \in H^s(\R^3)$, 
where  $u_j = S(t) \phi_j$.
See Appendix~\ref{APP:A}.
In particular, this shows that 
when $s \leq \frac 12$,
there is no deterministic smoothing for $\I$,  
i.e.~\eqref{Z3_3} does not hold for $\s > s$.

\smallskip

\noi
(ii)
The proof of Proposition~\ref{PROP:Z3}
only exploits  ``the randomization at the linear level''.
Given $s \in \R$, 
let $\RR^s$ denote the class of functions  defined by 
\begin{align*}
 \RR^s = \big\{ u \text{ on } \R\times \R^3:
\ & i \dt u  + \Dl u = 0 \text{ and } \notag\\
& 
u \in L^q_{t, \text{loc}} W^{s, r}_x(\R^3)
\text{ for any } 2 \leq q, r < \infty \big\}.
\end{align*}

\noi
Under the regularity assumption:\,$\s < 2s$ and $0 \leq s < 1$, 
it follows from the proof of Proposition \ref{PROP:Z3} 
that 
 the left-hand side of \eqref{Z3_3} is finite
  for any $u_1, u_2, u_3 \in \RR^s$.\footnote{Here, 
we only need finiteness of the $A_3^s$-norm defined in \eqref{O1a}
for each $u_j$, $j = 1, 2, 3$.
See Remark \ref{REM:Z3_2}.}
In other words, the proof of 
 Proposition \ref{PROP:Z3}  
 only uses the fact that the random linear solution 
 $z_1 = S(t) \phi^\o$ belongs to $\RR^s$ almost surely;
 see the probabilistic Strichartz estimate (Lemma~\ref{LEM:PStr}).
The multilinear random structure of $z_3$ 
in terms
of the random linear solution $z_1$ yields 
 further cancellation.
See, for example, Lemma 3.6 in \cite{CO}.
Such extra cancellation seems to  improve
only space-time integrability and we do not know how to use it
to improve the regularity threshold (i.e.~differentiability) at this point.
A similar comment applies to the unbalanced higher order terms
$\zeta_{2k-1}$ (including $z_5$ below) studied in Proposition~\ref{PROP:Zk} below.

\end{remark}

\subsection{Partial power series expansion
and the associated critical regularity}
\label{SUBSEC:1.3}

In this subsection,  we  discuss possible improvements 
over Theorem \ref{THM:1} by considering further expansions.
For this purpose, we fix $ \s = \frac 12$ in the following.
By examining the proof of Theorem \ref{THM:1}, 
we see that the regularity restriction $s > \frac 15$ comes from the  following third order term:
\begin{align}
  z_5 (t) := -i  \sum_{\substack{j_1 + j_2 + j_3 = 5\\j_1, j_2, j_3 \in \{1, 3\}  }}\int_0^t S(t - t') 
  z_{j_1} \cj{z_{j_2}}z_{j_3} (t') dt'.
\label{introZ5}
\end{align}

\noi
Namely, we have  $(j_1, j_2, j_3) = (1, 1, 3)$ up to permutations.
In Lemma \ref{LEM:Z5}, we show that 
given $0< s< \frac 12$, 
we have $  z_5  \in X^{\frac 52 s -}_\textup{loc}$.
In particular, 
we have $  z_5  \in X^\frac{1}{2}_\textup{loc}$,
provided $s > \frac 15 $, yielding the regularity threshold
in Theorem \ref{THM:1}.

A natural
next step is to 
remove this non-desirable third order interaction:
\[ \sum_{\substack{j_1 + j_2 + j_3 = 5\\j_1, j_2, j_3 \in \{1, 3\}  }}
  z_{j_1} \cj{z_{j_2}}z_{j_3}\]
  
  \noi
in the case-by-case analysis in \eqref{case1} by 
considering 
 the following  third order expansion:
\begin{align}
u = z_1 + z_3 + z_5+ v.
\label{v3}
\end{align}

\noi
In this case, the residual term $v := u - z_1 - z_3- z_5$ satisfies the following equation:
\begin{equation}
\begin{cases}
\displaystyle
 i \dt v + \Dl v =  \N(v + z_1+z_3+z_5) 
-   \sum_{\substack{j_1 + j_2 + j_3 \in \{3, 5\} \\j_1, j_2, j_3 \in \{1, 3\}  }}
 z_{j_1} \cj{z_{j_2}}z_{j_3}\\
v|_{t = 0} = 0.
 \end{cases}
\label{NLS6}
\end{equation}

\noi
We expect that \eqref{NLS6} is almost surely locally well-posed
for $s > \frac 2{11}$, 
which would be an improvement over Theorem \ref{THM:1}. 
The proof will be once again based on 
case-by-case analysis:
\begin{align*}
w_1\cj{w_2} w_3
\quad & \text{for  $w_i = v, z_1, z_3$, or $z_5$, $i = 1, 2, 3$, such that }  \\
& \text{it is not of the form $z_{j_1}\cj{z_{j_2}}z_{j_3} $ with $j_1 + j_2 + j_3 \in \{3, 5\}$}
\end{align*}

\noi in $N^\frac{1}{2}([0, T])$.
Note the increasing number of combinations.
In the following, however, we do not discuss details of  this particular  improvement over Theorem \ref{THM:1}.
Instead, we consider further iterative steps and discuss a possible limitation of this procedure.

\begin{remark}\rm
We point out that the expansions \eqref{v1}, \eqref{v2}, and \eqref{v3}
correspond to partial power series expansions
of the first, second, and third orders,\footnote{A (partial) power series expansion of a solution to \eqref{NLS1}
in terms of the random initial data
can be expressed as a summation of certain multilinear operators over
ternary trees.
See, for example, \cite{Christ, O17}.
Here, the term ``order'' in our context corresponds to  ``generation (of the associated trees) $+1$''
with the convention that the trivial tree consisting only of the root node
is of the zeroth generation.
For example, the third  order term $z_5$ in~\eqref{introZ5} appears as the summation
over all the multilinear operators associated to the ternary trees of the third  generation.
In terms of the graphical representation in \cite{O17}, we have
\begin{align*}
z_5 \,
 \text{``}\!=\!\text{''} \,\<31>\,+ \,\<32>\,+\,   \<33> \ , 
 \end{align*}

\noi
where 
``\,$\<1>$\,'' 
denotes  the random linear solution $z_1 = S(t) \phi^\o$
and 
``\,$\<1'>$\,'' 
denotes  the trilinear Duhamel integral operator 
$ \I(u_1, u_2, u_3)$ defined in \eqref{Duhamel1}
with its three children as its arguments $u_1, u_2$, and $u_3$.}
 respectively,
 of a solution to \eqref{NLS1}
in terms of the random initial data.
Then, by considering 
 the associated equations \eqref{NLS2}, \eqref{NLS4}, and \eqref{NLS6}
for the residual term $v$, 
we are  recasting  the original problem~\eqref{NLS1}
as a fixed point problem centered 
at the partial power series expansions
of the first, second, and third orders, respectively.

\end{remark}

By drawing an analogy to the previous steps, 
we expect that the worst contribution comes from the following fourth order terms:
\begin{align}
  z_7 (t): = -i  \sum_{\substack{j_1 + j_2 + j_3 = 7\\j_1, j_2, j_3 \in \{1, 3, 5\}  }}\int_0^t S(t - t') z_{j_1} \cj{z_{j_2}}z_{j_3} (t') dt'.
\label{introZ7}
\end{align}

\noi
There are basically two contributions to \eqref{introZ7}:
$(j_1, j_2, j_3) = (1, 3, 3)$ or $(1, 1, 5)$ up to permutations.
In Lemma \ref{LEM:Z7}, we show that the contribution from 
$(j_1, j_2, j_3) = (1, 1, 5)$ is worse, being responsible for the expected regularity restriction $s > \frac{2}{11}$.
In order to remove this term, 
we can consider 
 the following fourth order expansion:
\begin{align*}
u = z_1 + z_3 + z_5+z_7+ v
\end{align*}

\noi
as in the previous steps
and 
try to solve the following equation 
for the residual term $v = u - z_1 - z_3- z_5-z_7$:
\begin{equation*}
\begin{cases}
\displaystyle
 i \dt v + \Dl v =  \N(v + z_1+z_3+z_5+z_7) 
-   \sum_{\substack{j_1 + j_2 + j_3 \in \{3, 5, 7\} \\j_1, j_2, j_3 \in \{1, 3, 5\}  }}
 z_{j_1} \cj{z_{j_2}}z_{j_3}\\
v|_{t = 0} = 0.
 \end{cases}
\end{equation*}

We can obviously iterate this argument
and consider the following $k$th order expansion:
\begin{align}
u = \sum_{\l = 1}^{k} z_{2\l-1}+ v.
\label{vJ}
\end{align}

\noi
In this case, 
we need to consider the following equation 
for the residual term
$v := u - \sum_{\l = 1}^{k} z_{2\l-1}$:
\begin{equation}
\begin{cases}
\displaystyle
 i \dt v + \Dl v =  \N\bigg(v +\sum_{\l= 1}^{k} z_{2\l-1}\bigg) 
- 
 \sum_{\substack{j_1 + j_2 + j_3 \in \{3, 5, \dots, 2k-1\} 
\\j_1, j_2, j_3 \in \{1, 3, \dots, 2k-3\}}} 
 z_{j_1} \cj{z_{j_2}}z_{j_3}\\
v|_{t = 0} = 0
 \end{cases}
\label{NLS8}
\end{equation}

\noi
and hope to construct a solution $v \in X^\frac{1}{2}([0, T])$ 
by carrying out the following case-by-case analysis:
\begin{align}
\begin{split}
w_1\cj{w_2} w_3, 
\quad & \text{for  $w_i = v, z_j,$ $j \in \{1, 3, \dots, 2k-1\}$, 
 $i = 1, 2, 3$, such that }  \\
&  \text{it is not of the form $z_{j_1}\cj{z_{j_2}}z_{j_3} $ with 
$j_1 + j_2 + j_3  
\in  \{3, 5, \dots, 2k-1\}$}
\end{split}
\label{casek}
\end{align}

\noi 
in $N^\frac{1}{2}([0, T])$.
Then, a natural question to ask
is 
{\it``Does this iterative procedure 
work indefinitely, allowing us to arbitrarily lower the regularity threshold
for almost sure local well-posedness for~\eqref{NLS1}?
Or is there any limitation to it?}

We now  
consider a ``critical'' regularity $s_* < \frac 12$
with respect to this iterative procedure
for proving almost sure local well-posedness of \eqref{NLS1}.
We simply define 
the critical regularity $s_* <\frac 12 $ for this problem 
 to be the infimum of  the values of $s< \frac 12$
such that given any  $\phi \in H^s(\R^3)$, 
 the above iterative procedure\footnote{Namely, 
we  solve \eqref{NLS8} for $v \in X^\frac{1}{2}([0, T])$
with some finite (or infinite) number of steps.}
shows that \eqref{NLS1} is almost surely locally well-posed
with respect to the Wiener randomization $\phi^\o$ of $\phi$.
This is an empirical notion of criticality;
unlike the scaling criticality, 
we can not a priori compute this critical regularity~$s_*$.
Moreover, our discussion will be   based on the estimates on the stochastic
multilinear terms (Proposition~\ref{PROP:Z3} and Proposition~\ref{PROP:Zk} below).
In the following, we 
discuss a (possible) lower bound on $s_*$,
presenting a limitation to our iterative procedure
based on partial power series expansions.

Within the framework of the iterative procedure discussed  above, 
a necessary condition for carrying out the case-by-case analysis \eqref{casek}
 to study  \eqref{NLS8}
 in  $X^\frac{1}{2}([0, T])$
is that 
the $(k+1)$st order term 
$  z_{2k+1} =   z_{2(k+1)-1}$
defined in a recursive manner:
\begin{align}
  z_{2k+1}(t)  : = -i  \sum_{\substack{j_1 + j_2 + j_3 = {2k+1}
  \\j_1, j_2, j_3 \in \{1, 3, \dots, 2k-1\}  }}\int_0^t S(t - t') z_{j_1} \cj{z_{j_2}}z_{j_3} (t') dt'
\label{introZk}
\end{align}

\noi
belongs to  $X^\frac{1}{2}([0, T])$.
By the nature of this iterative procedure, we may assume that 
 the lower order terms 
 $z_{2\l-1}$,  $\l \in \{1, 3, \dots, k\}$, 
belong to $X^{ s_\l}([0, T])$ for some $ s_\l < \frac{1}{2}$ but not in $X^\frac{1}{2}([0, T])$,
since if any of 
 the lower order terms, say $z_{2\l-1}$ for some $\l \in \{1, 3, \dots, k\}$, 
 were in $X^\frac{1}{2}([0, T])$, 
then  we would have stopped the iterative procedure at the $(\l-1)$th step.
As in the previous steps, 
we expect 
that  $z_{2k+1}$ is responsible for a regularity restriction
at the $k$th step of this iterative approach.

It could be a cumbersome task to study the regularity
property of $z_{2k+1}$ due to 
the increasing number of combinations
for $z_{j_1}\cj{z_{j_2}} z_{j_3}$, satisfying
$j_1 + j_2 + j_3 = {2k+1}$, $j_1, j_2, j_3 \in \{1, 3, \dots, 2k-1\}$.
In the following,  we instead study the regularity property of the $(k+1)$st order term\footnote{In fact, 
we study the $k$th order term $\zeta_{2k-1}$ in Proposition \ref{PROP:Zk}.} of a particular form,
corresponding to 
  $(j_1, j_2, j_3) = (1, 1, 2k - 1)$ up to permutations
in \eqref{introZk};
given an integer $k \geq 0$, 
define  $\zeta_{2k+1}$ by 
setting $\zeta_1 := z_1 = S(t) \phi^\o$
and 
\begin{align}
\zeta_{2k+1}(t) :=    -i 
 \sum_{\substack{j_1 + j_2 + j_3 = {2k+1}
  \\j_1, j_2, j_3 \in \{1,   2k-1\}  }}
\int_0^t S(t - t') \zeta_{j_1} \cj{\zeta_{j_2}} \zeta_{j_3} (t') dt'.
\label{zeta}
\end{align}

\noi
As mentioned above, 
 there are only three terms
in this sum:   $(j_1, j_2, j_3) = (1, 1, 2k - 1)$ up to permutations.
Hence,  $\zeta_{2k+1}$ consists of the ``unbalanced''\footnote{By associating
the $(2k+1)$-linear terms appearing in the summation in \eqref{introZk}
with ternary trees of the $k$th generation as in \cite{O17}, 
the summands in \eqref{zeta} correspond
to the ``unbalanced'' trees of the $k$th generation,
where two of the three children of the root node are terminal.}
$(2k+1)$-linear terms appearing in the definition~\eqref{introZk}
of $z_{2k+1}$.
We claim that  the $(k+1)$st order term $\zeta_{2k + 1}$ is responsible for the regularity restriction at the $k$th step
of the iterative procedure.
See Theorem \ref{THM:2} below.

In anticipating the alternative expansion \eqref{vk1} below, 
let us study the regularity property of the unbalanced $k$th order term $\zeta_{2k-1}$:
\begin{align}
\zeta_{2k-1}(t) :=    -i 
 \sum_{\substack{j_1 + j_2 + j_3 = {2k-1}
  \\j_1, j_2, j_3 \in \{1,   2k-3\}  }}
\int_0^t S(t - t') \zeta_{j_1} \cj{\zeta_{j_2}} \zeta_{j_3} (t') dt'.
\label{zeta2}
\end{align}

\noi
For  $k = 2, 3, 4$, we have (with appropriate restrictions on the range of $s$)
\begin{align}
 \zeta_3 = z_3 \in X^{2s-}_\text{loc}, 
\qquad
 \zeta_5 = z_5 \in X^{\frac 5 2s-}_\text{loc}, 
 \qquad \text{and}\qquad 
  \zeta_7  \in X^{\frac{11}{4}s-}_\text{loc}.
  \label{intro1}
\end{align}

\noi
See Proposition \ref{PROP:Z3}
and Lemmas \ref{LEM:Z5} and \ref{LEM:Z7}.
In general, we have the following proposition.

\begin{proposition}\label{PROP:Zk}

Define a sequence $\{\al_k\}_{k \in \NB}$ of positive real numbers 
by the following recursive relation:
\begin{align}
\al_k = \frac{\al_{k-1} + 3}{2}
\label{al1}
\end{align}

\noi
with $\al_1 = 1$.
Given $0< s < \al_{k-1}^{-1}$, 
 let $\phi^\o$ be the Wiener randomization 
of $\phi \in H^s(\R^3)$
 defined in \eqref{rand}.
Then,  we have 
\begin{align}
  \zeta_{2k -1} \in X^\s_\textup{loc} 
\label{QQQ3a}
\end{align}

\noi
for $\s < \al_k \cdot  s $, almost surely.

\end{proposition}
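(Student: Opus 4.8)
The plan is to prove \eqref{QQQ3a} by induction on $k$, establishing the space-time bound
\begin{align*}
\|\zeta_{2k-1}\|_{X^{\s}([0,T])} \leq T^{\theta} C(\o, \|\phi\|_{H^s})
\end{align*}
for all $\s < \al_k \cdot s$, with $\al_k$ satisfying the recursion \eqref{al1}. The base case $k = 1$ is the probabilistic Strichartz estimate (Lemma \ref{LEM:PStr}), which places $\zeta_1 = z_1 = S(t)\phi^\o$ in $\RR^s$ almost surely, so that in particular $\zeta_1 \in X^{s}_\textup{loc}$ with $\al_1 = 1$; the cases $k = 2, 3, 4$ are recorded in \eqref{intro1}. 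For the inductive step, I assume $\zeta_{2k-3} \in X^{\s'}_\textup{loc}$ for all $\s' < \al_{k-1} s$, together with the full gain-of-integrability information carried by the $X^{\s'}$-norm (i.e.\ control of $\zeta_{2k-3}$ in $L^q_t W^{\s', r}_x$ for all admissible $(q,r)$), and I need to estimate the three Duhamel terms $\I(\zeta_{j_1}, \zeta_{j_2}, \zeta_{j_3})$ with $\{j_1, j_2, j_3\} = \{1, 1, 2k-3\}$ in $X^{\s}([0,T])$.

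The core of the argument is a single trilinear estimate on $\I(u_1, u_2, u_3)$ where two factors have regularity $s-$ (the random linear solution $z_1$, which lies in $\RR^s$) and one factor has regularity $(\al_{k-1} s)-$ (the term $\zeta_{2k-3}$). The natural bound, in the spirit of Proposition \ref{PROP:Z3}(i) and Remark \ref{REM:Z3_1}(ii), should give
\begin{align*}
\|\I(z_1, z_1, \zeta_{2k-3})\|_{X^{\s}([0,T])} \lesssim T^{\theta}\, \|z_1\|_{A_3^{s}}^2 \, \|\zeta_{2k-3}\|_{X^{\al_{k-1}s-}([0,T])}
\end{align*}
for $\s < \min(2s + \al_{k-1}s - 1, \ \text{something} )$; the recursion \eqref{al1} exactly encodes $\al_k s = 2s + \al_{k-1}s - 1$ when evaluated at the endpoint $s$-scaling, or more precisely $\al_k = \tfrac12(\al_{k-1} + 3)$ arises from the gain $2s - 1 + (\text{regularity of the third factor})$ after one application of the smoothing/bilinear-Strichartz machinery — I would track constants carefully to see that the loss per iteration halves the gap between $\al_{k-1}$ and $3$, which is precisely the content of \eqref{al1} and explains why $\al_k \uparrow 3$. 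The actual estimate reuses exactly the two tools from the proof of Theorem \ref{THM:A}: the gain of space-time integrability of $z_1$ (Lemma \ref{LEM:PStr}), used on the two $z_1$ factors to make their contribution arbitrarily subcritical, and the bilinear refinement of the Strichartz estimate (Lemma \ref{LEM:biStr}), used to pair one $z_1$ with $\zeta_{2k-3}$ and harvest the derivative gain; the third factor $\zeta_{2k-3}$ is simply placed in the appropriate $L^q_t W^{\s', r}_x$ space furnished by the inductive hypothesis. One must also invoke the dispersive-estimate–based gain of integrability on lower-order terms (as in Lemma \ref{LEM:Z3_2}), since measuring $\zeta_{2k-3}$ at its top regularity $(\al_{k-1}s)-$ forces a rigid choice of $(q,r)$, and recovering flexibility costs a bit of differentiability — this is why the statement is phrased with $\s$ strictly below $\al_k s$ rather than at the endpoint.

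The main obstacle I anticipate is bookkeeping the interplay between differentiability and integrability across the iteration: at each step the inductive hypothesis gives $\zeta_{2k-3}$ at regularity $(\al_{k-1}s)-$ but only in a restricted (admissible-pair) range of Lebesgue exponents, and the trilinear estimate needs $\zeta_{2k-3}$ in a Lebesgue-exponent slot dictated by Hölder against two $z_1$ factors plus the bilinear-Strichartz pairing. Reconciling these — i.e.\ showing that the range of $(q,r)$ guaranteed by the inductive step always contains an exponent for which the trilinear estimate closes with a positive power $T^{\theta}$ — is the delicate part; it is exactly the mechanism behind the ``$\eps$-loss'' and behind why the gain is $\al_k s - $ rather than $\al_k s$. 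The almost-sure finiteness of the constant $C(\o, \|\phi\|_{H^s})$ propagates trivially: it is finite almost surely for $\zeta_1$ by Lemma \ref{LEM:PStr}, and each $\zeta_{2k-1}$ is a deterministic trilinear (Duhamel) expression in quantities already shown to be almost surely finite, so a finite intersection of probability-one events suffices. I would organize the write-up as: (1) reduce \eqref{QQQ3a} to the trilinear bound above via \eqref{zeta2} and the definition of $N^\s$; (2) prove the trilinear bound using Lemmas \ref{LEM:PStr}, \ref{LEM:biStr}, and the dispersive gain of integrability, treating the three permutations $(1,1,2k-3)$ identically up to relabeling; (3) verify the recursion \eqref{al1} by matching exponents; (4) close the induction and collect the probabilistic sets.
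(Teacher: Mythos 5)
Your approach matches the paper's: induction on $k$, with the inductive step being a re-run of the case-by-case analysis from Lemma~\ref{LEM:Z5} in which $z_3$ is replaced by $\zeta_{2k-3}$ and its regularity $2s-$ is replaced by $\alpha_{k-1}s-$. Two points, however, need correction.

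First, your claimed exponent count is wrong. You write that the trilinear bound should hold for $\sigma < 2s + \alpha_{k-1}s - 1$ and that the recursion ``encodes $\alpha_k s = 2s + \alpha_{k-1}s - 1$ at the endpoint $s$-scaling.'' This formula is not dimensionally consistent with $\alpha_k = \tfrac12(\alpha_{k-1}+3)$ (they agree only at the single value $s = \tfrac{2}{\alpha_{k-1}+1}$), and it is not how the recursion arises. The binding constraint comes from Case (3) of the Lemma~\ref{LEM:Z5} analysis, namely $N_1 \sim N_4 \sim N_{\max} \gg N_2, N_3$, in the subregime $N_2, N_3 \ll N_1^{1/2}$. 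The two bilinear Strichartz pairings there produce
\[
N_1^{\sigma - s - \frac12+}\,N_2^{1-s-}\,N_3^{1-\alpha_{k-1}s-}\,N_4^{-\frac12+}.
\]
Under $0 < s < \alpha_{k-1}^{-1}$ the exponents of $N_2, N_3$ are positive, so the worst case is $N_2 \sim N_3 \sim N_1^{1/2}$, giving $N_1^{\sigma - s - 1 + \frac{1-s}{2} + \frac{1-\alpha_{k-1}s}{2}+} = N_1^{\sigma - \frac{\alpha_{k-1}+3}{2}s+}$. This is exactly where $\alpha_k = \tfrac{\alpha_{k-1}+3}{2}$ comes from, and also where the hypothesis $s < \alpha_{k-1}^{-1}$ is used. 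Your ``$2s-1$ gain'' heuristic does not produce this; the halving of the gap to $3$ reflects the $N_1^{1/2}$ threshold in the case split, not an additive loss of one derivative per step.

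Second, you claim Lemma~\ref{LEM:Z3_2} (dispersive gain of integrability) is needed in this proof. It is not. In the proof of Lemma~\ref{LEM:Z5} the factor $z_3$ is only ever measured in the admissible norms $L^2_{T,x}$ (via bilinear Strichartz) and $L^{10/3}_{T,x}$, both of which are controlled directly by $\|z_3\|_{Y^{2s-}_T}$ through Lemma~\ref{LEM:Str}; the same remains true with $\zeta_{2k-3}$ in place of $z_3$. The dispersive-estimate lemma (and its analogue Lemma~\ref{LEM:Zk_2}) is only invoked later, for the proof of Lemma~\ref{LEM:Z7}~(i) and for the nonlinear estimates behind Theorems~\ref{THM:1} and~\ref{THM:2}, where $z_3\cdot z_3$ or $\zeta\cdot\zeta$ pairings force non-admissible Lebesgue exponents.

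With these two corrections, your plan does reproduce the paper's argument.
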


By solving the recursive relation \eqref{al1}, 
we have
\begin{align}
\al_k = 2\bigg\{ 1 - \bigg(\frac 12\bigg)^{k-1}\bigg\} + 1
\label{al2}
\end{align}

\noi
and thus we have 
 $\al_2 = 2$, $\al_3 = \frac 52$, and $\al_4 = \frac{11}{4}$.
In particular, Proposition \ref{PROP:Zk}
agrees with~\eqref{intro1}.
Moreover, since  $\al_k$ is increasing and $\lim_{k \to \infty} \al_k =3$, 
the regularity restriction $s < \al_{k-1}^{-1}$
in Proposition \ref{PROP:Zk}
does not cause any issue
since 
our main focus is to study the probabilistic local well-posedness
of \eqref{NLS1} in the range of $s$ that is not covered by 
Theorem~\ref{THM:1}.
Namely, 
we may assume $s \leq \frac 15\, (< \frac 13)$ in the following.

In view of Propositions \ref{PROP:Z3}
and \ref{PROP:Zk}, one obvious  lower bound for the
critical regularity~$s_*$ for this iterative procedure is given by $s_0: = 0$
since there is no gain of regularity when $s = 0$ (even in moving from $z_1$ to $z_3$).
On the other hand, 
in order to prove almost sure local well-posedness of \eqref{NLS1}
by carrying out the case-by-case analysis \eqref{casek} for the equation~\eqref{NLS8}, 
we need to show 
that the $(k+1)$st order term $\zeta_{2k+1}$ belongs to  $X^\frac{1}{2}([0, T])$.
This gives rise to a regularity restriction
$s_k := \frac{1}{2\al_{k+1}}$
at the $k$th step of the iterative procedure.
By taking $k \to \infty$, we obtain another ``lower'' bound\footnote{This ``lower'' bound 
is based on the upper bounds obtained in Propositions \ref{PROP:Z3}
and \ref{PROP:Zk}.
In other words, 
if one can improve the bounds
in Propositions \ref{PROP:Z3}
and \ref{PROP:Zk}, then one can lower the value of $s_\infty$.
See Remark~\ref{REM:multi}.}
$s_\infty := \frac 16$
on this critical regularity $s_*$.

As mentioned above, 
the case-by-case analysis \eqref{casek}
for general $k \in \NB$
may be a combinatorially overwhelming task
due to (i) the number of the increasing combinations in~\eqref{casek}
and (ii) the random multilinear  terms
$z_j,$ $j \in \{3, 5, \dots, 2k-1\}$, 
themselves having non-trivial combinatorial structures
which makes it difficult to 
establish nonlinear estimates;
see \eqref{introZk}.
In the following, we instead consider an alternative  iterative procedure
based on   the following expansion:
\begin{align}
u = \sum_{\l = 1}^k \zeta_{2\l-1} + v
\label{vk1}
\end{align}

\noi
in place of \eqref{vJ}.
This expansion allows us to prove  the following almost sure local well-posedness
of \eqref{NLS1} for $s > s_\infty=  \frac 16$.

\begin{theorem}\label{THM:2}
Let $\frac 16 < s < \frac 12$.
Given $\phi \in H^s(\R^3)$, let $\phi^\o$ be its Wiener randomization defined in \eqref{rand}.
Then, the cubic NLS \eqref{NLS1} on $\R^3$
is almost surely locally well-posed
with respect to the random initial data $\phi^\omega$.
More precisely,
there exists a set $\Si = \Si(\phi) \subset \O$ 
with $P(\Si) = 1$ such that, for any $\o \in \Si$, there exists
a unique function  $u = u^\o$ in the class:
\begin{align*}
\zeta_1  + \zeta_3 + \cdots+ \zeta_{2k-1} 
 + X^\frac{1}{2}([0, T]) 
&   \subset
\zeta_1  + \zeta_3 + \cdots+ \zeta_{2k-1} 
+ C([0, T]; H^\frac{1}{2} (\R^3)) 
\notag\\\
&  
\subset C([0, T];H^s(\R^3))
\end{align*}

\noi
with  $T = T(\phi, \o) >0$ 
such that $u$ is a solution to \eqref{NLS1} on $[0, T]$.
Here, 
 $k \in \NB $ is a unique positive integer such that $\frac{1}{2\al_{k+1}} < s \leq \frac{1}{2\al_{k}}$.

\end{theorem}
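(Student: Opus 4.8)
The plan is to recast \eqref{NLS1} as a fixed point problem for the residual term $v = u - \sum_{\l=1}^k \zeta_{2\l-1}$ in $X^{\frac12}([0,T])$ on a random short time interval, where $k$ is the integer attached to $s$ by $\frac{1}{2\al_{k+1}} < s \le \frac{1}{2\al_k}$. First I would record that, by the recursive definition \eqref{zeta2}, $\zeta_1 = S(t)\phi^\o$ solves the linear equation while each $\zeta_{2\l-1}$ with $\l \ge 2$ solves $i\dt \zeta_{2\l-1} + \Dl \zeta_{2\l-1} = \sum_{j_1+j_2+j_3=2\l-1,\ j_i \in \{1,2\l-3\}} \zeta_{j_1}\cj{\zeta_{j_2}}\zeta_{j_3}$ with zero data; summing over $\l = 1, \dots, k$, the residual $v$ satisfies
\begin{equation*}
\begin{cases}
\dis i\dt v + \Dl v = \N\Big(v + \sum_{\l=1}^k \zeta_{2\l-1}\Big) - \sum_{\l=2}^k \sum_{\substack{j_1+j_2+j_3=2\l-1\\ j_1,j_2,j_3 \in \{1,2\l-3\}}} \zeta_{j_1}\cj{\zeta_{j_2}}\zeta_{j_3}\\
v|_{t=0} = 0,
\end{cases}
\end{equation*}
and I would solve the corresponding Duhamel formulation by a contraction argument in a ball of $X^{\frac12}([0,T])$. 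The inclusions claimed in the statement are then immediate from Proposition~\ref{PROP:Zk} (which places each $\zeta_{2\l-1}$ in $C([0,T];H^s)$) together with $X^{\frac12}([0,T]) \subset C([0,T];H^{\frac12})$ and $s<\frac12$.

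Expanding $\N(u)=|u|^2u$ and cancelling the subtracted terms, one is left with trilinear terms $w_1\cj{w_2}w_3$, $w_i \in \{v, \zeta_1, \zeta_3, \dots, \zeta_{2k-1}\}$, which I would split into three classes and estimate in $N^{\frac12}([0,T])$. \textbf{(a)} Terms containing at least one factor $v$: these are controlled by the deterministic multilinear machinery used for Theorems~\ref{THM:A} and~\ref{THM:1}, i.e.\ the bilinear refinement of the Strichartz estimate (Lemma~\ref{LEM:biStr}) together with the gain of space-time integrability of $\zeta_1$ (Lemma~\ref{LEM:PStr}) and of the higher $\zeta_{2m-1}$ (the analogue of Lemma~\ref{LEM:Z3_2}); each such bound carries a factor $T^\theta$ and is therefore contractive on a small ball once $T$ is small. \textbf{(b)} The purely stochastic terms whose index triple is a permutation of $(1,1,2k-1)$: their Duhamel integral equals $\zeta_{2k+1}$, and since $s > \frac{1}{2\al_{k+1}}$ forces $\al_{k+1}s > \frac12$, Proposition~\ref{PROP:Zk} gives $\zeta_{2k+1} \in X^{\frac12}_{\textup{loc}}$ with a bound $\les T^\theta C(\o,\|\phi\|_{H^s})$. \textbf{(c)} All remaining purely stochastic terms, which — by the choice of what has been subtracted — have at least two factors $\zeta_j$ with $j \ge 3$.

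The crux is class \textbf{(c)}: one must prove a general multilinear estimate showing $\I(\zeta_{j_1},\zeta_{j_2},\zeta_{j_3}) \in X^{\frac12}([0,T])$ almost surely, with a $T^\theta$-bound, whenever at least two of $j_1,j_2,j_3$ are $\ge 3$ — equivalently, that $\zeta_{2k+1}$ is the worst among all non-subtracted stochastic terms. The mechanism is that each $\zeta_{2m-1}$ with $m \ge 2$ already lives in $X^{\al_m s-} \subset X^{2s-}$ and, like $z_3$, enjoys a gain of integrability, so feeding two such smoother and better-integrable factors (and at most one copy of the rough $\zeta_1 \in \RR^s$) into the bilinear-Strichartz-based estimate for $\I$ yields an output at least as regular as $\zeta_{2k+1}$, hence of regularity strictly above $\frac12$ for the fixed $s$ in the range. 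With \textbf{(a)}--\textbf{(c)} in place, a standard contraction argument produces a unique $v \in X^{\frac12}([0,T])$ for $\o$ in a set $\Si_T$ whose complement obeys the exponential probability bound of the type in Remark~\ref{REM:uniq1}, coming from the exponential moment assumption \eqref{exp1} applied to $\zeta_1$ and the stochastic terms; setting $\Si = \bigcup_{0 < T \ll 1}\Si_T$ and upgrading uniqueness of $v$ from the ball to all of $X^{\frac12}([0,T])$ via continuity in $t$ of $\|v\|_{X^{\frac12}([0,t])}$, exactly as in Remark~\ref{REM:uniq1}, finishes the proof. I expect the main obstacle to be precisely the class-\textbf{(c)} estimate: handling the combinatorially many non-subtracted stochastic terms and verifying \emph{uniformly} that none is rougher than $\zeta_{2k+1}$, which requires combining the regularity bound of Proposition~\ref{PROP:Zk} with the integrability gains for the $\zeta_{2m-1}$ and carefully tracking how derivatives and integrability are apportioned among the three factors in the trilinear Duhamel estimate.
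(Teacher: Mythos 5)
Your proposal is correct and follows essentially the same route as the paper: recast the problem as a fixed point equation for $v$ in $X^{\frac12}([0,T])$ around the modified expansion $\sum_{\l=1}^k \zeta_{2\l-1}$, isolate the worst stochastic term $\zeta_{2k+1}$ and control it via Proposition~\ref{PROP:Zk}, and handle the remaining trilinear interactions by recycling the deterministic/probabilistic machinery of Proposition~\ref{PROP:NL1}. The one point you frame as the "main obstacle" — your class~\textbf{(c)} — is in fact resolved by a single clean observation the paper makes explicit: Proposition~\ref{PROP:Zk} and Lemma~\ref{LEM:Zk_2} show that every $\zeta_{2\l-1}$, $\l\ge 2$, has \emph{at least} the regularity and space-time integrability of $\zeta_3=z_3$, so the estimates already proved in Lemma~\ref{LEM:Z7}(i) and Proposition~\ref{PROP:NL1} (Cases~(B)--(I)) apply verbatim with each $\zeta_{2\l-1}$ substituted for $z_3$, giving the restriction $s>\tfrac16$ uniformly in $k$ with no new multilinear estimate required.
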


As before, 
the uniqueness of $u$ in the class:
\begin{align*}
\zeta_1  + \zeta_3 + \cdots+ \zeta_{2k-1} 
 + X^\frac{1}{2}([0, T]) 
\end{align*}

\noi
is to be interpreted as
uniqueness of the residual term $v = u -\sum_{\l = 1}^k \zeta_{2\l-1} $
in $X^\frac{1}{2}([0, T])$.
See also Remark \ref{REM:uniq1}.

In view of the discussion above, 
Theorem \ref{THM:2} proves
almost sure local well-posedness of \eqref{NLS1}
 in an almost ``optimal''\footnote{Once again, this is based
 on the estimates in Propositions \ref{PROP:Z3}
and \ref{PROP:Zk}. 
In particular, the ``optimality''
of the regularity threshold in Theorem \ref{THM:2}
is with respect to 
 Propositions \ref{PROP:Z3}
and \ref{PROP:Zk}.
 If one can improve the bounds
in Propositions \ref{PROP:Z3}
and \ref{PROP:Zk}, then one can lower 
the regularity threshold in Theorem \ref{THM:2}.}
regularity range $s > s_\infty = \frac 16$
with respect to the original iterative procedure 
based on the partial power series expansion \eqref{vJ}.
The proof of Theorem \ref{THM:2}
is analogous
to that of Theorem \ref{THM:1}.
Given $ \frac 16 <  s<\frac 12$, 
fix $k \in \NB$ such that $\frac{1}{2\al_{k+1}} < s \leq \frac{1}{2\al_{k}}$.\footnote{In view of Proposition \ref{PROP:Zk}, 
the lower bound on $s$
guarantees that $\zeta_{2k+1} \in X^\frac{1}{2}([0, T])$ almost surely, 
while the upper  bound on $s$
states that we can not use 
 Proposition \ref{PROP:Zk} to conclude 
$\zeta_{2k-1} \in X^\frac{1}{2}([0, T])$ almost surely.}
Write a solution $u$ as in \eqref{vk1}.
Note that as $s$ gets closer  and closer to the critical value $s_\infty = \frac{1}{6}$, 
the expansion~\eqref{vk1} gets  arbitrarily long. 
In view of \eqref{zeta2},  the residual term
$v := u - \sum_{\l = 1}^{k} \zeta_{2\l-1}$ satisfies the following equation:
\begin{equation}
\begin{cases}
\displaystyle
 i \dt v + \Dl v =  \N\bigg(v +\sum_{\l= 1}^{k} \zeta_{2\l-1}\bigg) 
-  
\sum_{\l = 2}^{k}
\sum_{\substack{j_1 + j_2 + j_3 = 2\l - 1\\j_1, j_2, j_3 \in \{1,  2\l-3\}}} 
 \zeta_{j_1} \cj{\zeta_{j_2}}\zeta_{j_3}\\
v|_{t = 0} = 0.
 \end{cases}
\label{NLS9}
\end{equation}

\noi
Hence, we need to carry out 
 the following case-by-case analysis:
\begin{align}
\begin{split}
w_1\cj{w_2} w_3, 
\quad & \text{for  $w_i = v, \zeta_j,$ $j \in \{1, 3, \dots, 2k-1\}$, 
 $i = 1, 2, 3$, such that }  \\
&  \text{it is not of the form $\zeta_{j_1}\cj{\zeta_{j_2}}\zeta_{j_3} $ with 
$(j_1,  j_2,  j_3 )   
=  (1, 1,  2\l-3)$}\\
& \text{(up to permutations) for $\l = 2, 3 \dots, k$} 
\end{split}
\label{casek2}
\end{align}

\noi 
in $N^\frac{1}{2}([0, T])$.
While Theorem \ref{THM:2} yields
almost sure local well-posedness
in an almost optimal range 
with respect to the original iterative procedure 
based on the partial power series expansion~\eqref{vJ}, 
the required analysis is much simpler
than that required 
for the original iterative procedure based on the expansion \eqref{vJ}.
First,  note that while the case-by-case analysis~\eqref{casek}
based on the original iterative procedure
involves combinatorially non-trivial $z_{2k-1}$, 
 the case-by-case analysis~\eqref{casek2}
only involves 
the  unbalanced $k$th order term $\zeta_{2k-1}$
which has a much simpler structure than $z_{2k-1}$.
In particular, Proposition~\ref{PROP:Zk} shows that 
$\zeta_{2k-1}$, $k \geq 3$,  
has a better regularity property than the second order term $\zeta_3 = z_3$
in \eqref{introZ3}.
In terms of space-time integrability, 
we show that $\zeta_{2k-1}$ also 
 enjoys a gain of integrability by giving up a control on derivatives 
 (Lemma \ref{LEM:Zk_2}).
 Finally, by inspecting the proof of Theorem~\ref{THM:1}
 (see Lemma \ref{LEM:Z7} and Proposition \ref{PROP:NL1} below), 
 we see that,
 except for  $z_{j_1} \cj{z_{j_2}}z_{j_3}$ with $(j_1, j_2, j_3) = (1, 1, 3)$ up to permutations,\footnote{Namely, the terms
  constituting the third order term $\zeta_5 = z_5$
in~\eqref{introZ5}.} 
we can bound 
all the terms $w_1 \cj{w_2} w_3$ 
appearing  in the case-by-case analysis~\eqref{case1}
 in $N^\frac{1}{2}([0, T])$. 
 Hence, we can basically 
 apply the result of the case-by-case analysis~\eqref{case1} to our problem at hand.
More precisely, by rewriting the case-by-case analysis \eqref{casek2}
as 
\[ w_i = v,  \zeta_1 = z_1 , \text{ or }\zeta_j, \, j \in \{3, 5, \dots, 2k-1\}\]

\noi
(with the restriction in \eqref{casek2})
and using the fact that the terms $\zeta_{2\l - 1}$, $\l = 3, \dots, k$,  behave better than $\zeta_3 = z_3$, 
the proof of Theorem \ref{THM:1} (in particular, Proposition \ref{PROP:NL1})
can be used to control  all the terms $w_1 \cj{w_2} w_3$ in \eqref{casek2} {\it except for}
\begin{align}
 \zeta_{j_1}\cj{\zeta_{j_2}}\zeta_{j_3} 
\quad \text{with }
(j_1,  j_2,  j_3 )   
=  (1, 1,  2 k-1)
\label{casek2a}
\end{align}

\noi
 (up to permutations).
Note that the contribution to \eqref{casek2a} under the Duhamel integral
is precisely given by $\zeta_{2k+1}$ in \eqref{zeta}.
In particular, Proposition \ref{PROP:Zk} with $\s = \frac 12$ yields
the regularity restriction $\al_{k+1}\cdot s > \frac 12$, 
i.e.~the lower bound 
$s > \frac{1}{2\al_{k+1}}$ stated in Theorem \ref{THM:2}.
 This allows us to 
  construct a solution $v \in X^\frac{1}{2}([0, T])$ 
  by the standard fixed point argument.
See Section \ref{SEC:THM2} for details.

We previously conjectured that the $(k+1)$st order term $z_{2k+1}$ in \eqref{introZk}
would be responsible for a regularity 
restriction at the $k$th step of the original iterative procedure.
Combining Proposition \ref{PROP:Zk}
and Theorem \ref{THM:2}, 
we confirmed this claim;
 the regularity restriction
indeed comes only from the unbalanced
$(k+1)$st order term $\zeta_{2k+1}$ in \eqref{zeta}.

We conclude this introduction with several remarks.

\begin{remark}\rm
Let $v_0 \in H^\frac{1}{2}(\R^3)$.
Then, by slightly modifying the proof of Theorem \ref{THM:2}
based on the modified iterative approach \eqref{vk1},
we can prove almost sure local well-posedness
of  the cubic NLS 
\eqref{NLS5a}
with the random initial data of the form $v_0 + \phi^\o$
for the same range of $s$.
See Remark \ref{REM:v_0}

\end{remark}

\begin{remark}\label{REM:multi}\rm
In this paper, 
we exploit randomness only at the linear level
in estimating the stochastic terms.
See Remarks \ref{REM:Z3_1} and \ref{REM:Z3_2}.
It may be possible to lower the regularity thresholds
in Theorems \ref{THM:1} and \ref{THM:2} 
by exploiting randomness at the multilinear level.
We, however, do not pursue this direction in this paper since
(i) our main purpose is to present the iterative procedures in their simplest forms
and 
(ii) estimating the higher order stochastic terms by exploiting randomness
at the multilinear level
would require a significant amount of additional work,
which would blur the main focus of this paper.

\end{remark}

\begin{remark}\rm

The ill-posedness result in \cite{CCT} show that 
the solution map 
\[\Phi: u_0 \in H^s(\R^3) \longmapsto u \in C([-T, T]; H^s(\R^3))\]

\noi
is not continuous for \eqref{NLS1}
when $s < \frac{1}{2}$.
In proving Theorem \ref{THM:A}, we studied 
the perturbed NLS~\eqref{NLS2} for $v = u-z_1$.
In particular, the proof shows that 
we can  factorize the solution map for~\eqref{NLS1} as\footnote{Similarly, 
we can factorize the solution map for \eqref{NLS5a} 
as 
\begin{align*}
u |_{t = 0} =  v_0 + \phi^\o \in H^s(\R^3) 
\longmapsto  (v_0, z_1)   \stackrel{\Psi}{\longmapsto} v \in
X^\frac{1}{2}([0, T]) \subset  C([0, T]; H^\frac{1}{2}(\R^3))
\end{align*}

\noi
such that the second map $\Psi$ is continuous
in  $(v_0, z_1) \in H^\frac{1}{2}(\R^3)\times S^s_1([0, T])$.
}

\begin{align}
\phi^\o \in H^s(\R^3) 
\longmapsto  z_1   \stackrel{\Psi}{\longmapsto} v \in 
X^\frac{1}{2}([0, T]) \subset C([0, T]; H^\frac{1}{2}(\R^3)),
\label{dia1}
\end{align}

\noi
where the first map can be viewed as a universal lift map
and the second map $\Psi$ is the solution map 
to \eqref{NLS2},
which is in fact continuous in $z_1 \in S^s_1([0, T])$.\footnote{Here,  $S^s_1([0, T]) \subset C([0, T]; H^s(\R^3))$ denotes the intersection of 
suitable space-time function spaces of differentiability at most $s$.
In the following, we use $S^{\s}_j([0, T])$ in a similar manner.}

In the case of Theorem \ref{THM:1} (with $\s = \frac 12$), we have the following factorization 
of the solution map for~\eqref{NLS1}:
\begin{align} \phi^\o \in H^s(\R^3)
\longmapsto (z_1, z_3 ) \stackrel{\Psi}{\longmapsto} v \in 
X^\frac{1}{2}([0, T]) \subset 
C([0, T]; H^\frac{1}{2}(\R^3)),
\label{dia2}
\end{align}

\noi
where  the second map $\Psi$ is the solution map 
to \eqref{NLS4},
which is  continuous from $(z_1, z_3)  \in S^s_1([0, T])\times S^{2s-}_3([0, T])$
to $v \in X^\frac{1}{2}([0, T])$. 

In the case of Theorem \ref{THM:2}, we create $k$ stochastic objects in the first step,
where $k = k(s) \in \NB$:
\begin{align} 
\phi^\o \in H^s(\R^3)
\longmapsto (\zeta_1, \zeta_3, \dots, \zeta_{2k-1} ) \stackrel{\Psi}{\longmapsto} v \in 
X^\frac{1}{2}([0, T]) \subset 
C([0, T]; H^\frac{1}{2}(\R^3)).
\label{dia3}
\end{align}

\noi
Once again, the  second map $\Psi$ (which is the solution map 
to~\eqref{NLS9})
is  continuous from 
$(\zeta_1, \zeta_3, \dots, \zeta_{2k-1} )  \in S^s_1([0, T]) \times 
\prod_{\l = 2}^ k S_\l^{\al_\l s-} ([0, T])$
to $v \in 
X^\frac{1}{2}([0, T])$.  
We point out  an analogy between these factorizations \eqref{dia1}, \eqref{dia2}, and \eqref{dia3} of the ill-posed solution maps
into 
\begin{itemize}
\item[(i)] the first step, involving stochastic analysis
and 

\item [(ii)] the second step, where purely deterministic analysis is performed
in constructing a continuous map $\Psi$

\end{itemize}

\noi
and similar factorizations  for studying rough differential equations via the rough path theory~\cite{FH}
and 
singular stochastic parabolic PDEs~\cite{GP, Hairer}.

In the proof of Theorem \ref{THM:2}, 
we could  consider the following expansion of an infinite order:
\begin{align}
u = \sum_{\l = 1}^\infty \zeta_{2\l-1} + v.
\label{vk2}
\end{align}

\noi
This would allow us to present a single argument that works
for all $\frac16 < s < \frac 13$.
In this case, the residual part 
$v := u - \sum_{\l = 1}^\infty \zeta_{2\l-1}$ satisfies 
\begin{equation*}
\begin{cases}
\displaystyle
 i \dt v + \Dl v =  \N\bigg(v +\sum_{\l= 1}^{\infty} \zeta_{2\l-1}\bigg) 
-  
\sum_{\l = 2}^{\infty}
\sum_{\substack{j_1 + j_2 + j_3 = 2\l - 1\\j_1, j_2, j_3 \in \{1,  2\l-3\}}} 
 \zeta_{j_1} \cj{\zeta_{j_2}}\zeta_{j_3}\\
v|_{t = 0} = 0.
 \end{cases}
\end{equation*}

\noi
In particular, we would need to worry about the convergence issue
of infinite series
and hence there seems to be no simplification in considering the infinite order expansion \eqref{vk2}.

Another strategy would be to treat $\zeta_\infty: =  \sum_{\l = 1}^\infty \zeta_{2\l-1} $ as one 
stochastic object
and write 
\begin{align*}
u =\zeta_\infty + v.
\end{align*}

\noi
 It follows from \eqref{zeta2} that 
$\zeta_\infty$ satisfies the following equation:
\begin{equation}
\begin{cases}
\displaystyle
 i \dt \zeta_\infty + \Dl \zeta_\infty = 
\sum_{\l = 2}^{\infty}
\sum_{\substack{j_1 + j_2 + j_3 = 2\l - 1\\j_1, j_2, j_3 \in \{1,  2\l-3\}}} 
 \zeta_{j_1} \cj{\zeta_{j_2}}\zeta_{j_3}\\
\zeta_\infty|_{t = 0} = \phi^\o.
 \end{cases}
\label{NLS11}
\end{equation}

\noi
Noting that 
\[\sum_{\substack{j_1 + j_2 + j_3 = 2\l - 1\\j_1, j_2, j_3 \in \{1,  2\l-3\}}} 
 \zeta_{j_1} \cj{\zeta_{j_2}}\zeta_{j_3}
= \begin{cases}
|z_1|^2 z_1& \text{when }\l = 2,\\
2|z_1|^2\zeta_{2\l-3} + z_1^2 \cj{\zeta_{2\l-3}},
& \text{when }\l \geq 3,
\end{cases}
\]	

\noi
we can rewrite \eqref{NLS11} as 
\begin{equation}
\begin{cases}
\displaystyle
 i \dt \zeta_\infty + \Dl \zeta_\infty = 
 2|z_1|^2 \zeta_\infty + z_1^2 \cj{\zeta_\infty}
 - 2 |z_1|^2 z_1\\
\zeta_\infty|_{t = 0} = \phi^\o.
 \end{cases}
\label{NLS12}
\end{equation}

\noi
and thus $v : = u - \zeta_\infty$ satisfies
\begin{equation}
\begin{cases}
\displaystyle
 i \dt v + \Dl v =  \N(v +\zeta_\infty)
-   2|z_1|^2 \zeta_\infty - z_1^2 \cj{\zeta_\infty}
 + 2 |z_1|^2 z_1\\
v|_{t = 0} = 0.
 \end{cases}
\label{NLS13}
\end{equation}

\noi
The equations \eqref{NLS12} and \eqref{NLS13}
do not particularly appear to be in such a friendly format.
Namely,  studying 
\eqref{NLS12} and \eqref{NLS13}
does not seem to provide a simplification
over the case-by-case analysis \eqref{casek2}
for the equation 
\eqref{NLS9} for each fixed $k \in \NB$.

In a recent work \cite{OTW}, the second author (with Tzvetkov and Y.\,Wang)
proved invariance of the white noise for the (renormalized) cubic fourth order NLS on the circle.
One novelty of this work is
that we introduced 
 an infinite sequence  $\{ z^\text{res}_{2\l-1}\}_{\l \in \NB}$
of stochastic $(2\l-1)$-linear objects  (depending only on the random initial data)
and considered the following expansion of an infinite order:
\[ u =  \sum_{\l = 1}^\infty z^\text{res}_{2\l-1} + v.\]

\noi
For this problem, 
it turned out that 
 $z^\text{res}_\infty: = \sum_{\l = 1}^\infty z^\text{res}_{2\l-1}$
satisfies a particularly simple equation.\footnote{In fact, the series
 $z^\text{res}_\infty = \sum_{\l = 1}^\infty z^\text{res}_{2\l-1}$
corresponds to 
the power series expansion of the resonant  cubic fourth order NLS.
We point out that  $z^\text{res}_\infty$ does not belong to 
the span of Wiener
homogeneous chaoses of any finite order.}
Then by treating $z^\text{res}_\infty$ as one stochastic object,
we wrote a solution $u$ as $u = z^\text{res}_\infty +v$,
which led to 
the  following factorization:
\[\phi^\o \in H^s(\T) 
\longmapsto 
z^\text{res}_\infty = \sum_{\l = 1}^\infty z^\text{res}_{2\l-1}
 \longmapsto v \in C(\R; L^2(\T))\]

\noi
for $s < -\frac 12$,
where $\phi^\o$ denotes the Gaussian white noise on the circle.

\end{remark}

\begin{remark}	\rm 
In \cite{PW}, 
the third author (with Y.\,Wang) 
recently studied probabilistic local well-posedness of NLS on $\R^d$ 
within the framework of  the $L^p$-based Sobolev spaces,
using the dispersive estimate.
In the context of the cubic NLS \eqref{NLS1} on $\R^3$, 
their result 
yields  almost sure  local existence of 
a unique
solution $u$
for the randomized initial data $\phi^\o \in H^s(\R^3)$,
provided $s\geq 0$. 
In particular, the argument in \cite{PW}
allows us 
 to consider 
random initial data of lower regularities than
Theorems~\ref{THM:A},~\ref{THM:1}, and~\ref{THM:2}.
Note that, in \cite{PW}, 
it was shown that  
the solution $u$  only belongs to 
$C([0, T]; W^{s, 4}(\R^3))$, almost surely.
We point out that  a slight adaptation of the work \cite{OPW} by the second and third authors (with Y.\,Wang) 
shows that 
the solution $u$ indeed lies in $C([0, T]; H^s(\R^3))$.

As compared to \cite{PW}, 
 the argument presented in this paper
 provides extra regularity information, namely
 the decomposition~\eqref{vk1}  of the solution $u$
with the terms of increasing regularities:
$  \zeta_{2\l -1} \in X^{\al_\l\cdot s-}_\textup{loc} $
and $v \in X^\frac{1}{2}([0, T])$.
In particular, the residual term $v$ lies in the (sub)critical regularity,\footnote{By slightly tweaking the argument, 
we can easily place $v$ in $X^{\frac{1}{2}+}([0, T])$.}
leaving us a possibility of adapting deterministic techniques to study 
its further properties.

\end{remark}

This paper is organized as follows.
In Section \ref{SEC:2}, 
we recall probabilistic and
deterministic lemmas along with the definitions of the basic  function spaces.
In Section \ref{SEC:Z3}, 
we study the regularity properties of the second order term $z_3$ in \eqref{introZ3}.
In Section \ref{SEC:high}, 
we further investigate the regularity properties
of the higher order terms $z_5$ and $z_7$
and the unbalanced higher order terms $\zeta_{2k-1}$.
Note that the analysis on $z_5$ and $z_7$ contains
part\footnote{Note that $z_7$ defined in \eqref{introZ7} contains 
the contribution from $z_{j_1}\cj{z_{j_2}}z_{j_3}$
with $(j_1, j_2, j_3) = (1, 3, 3)$ up to permutations.} of the case-by-case analysis \eqref{case1}
needed for proving Theorem \ref{THM:1}.
In Section \ref{SEC:NL1},
we then carry out the rest of 
the case-by-case analysis \eqref{case1}
and prove Theorem \ref{THM:1}.
In Section \ref{SEC:THM2}, 
we briefly describe   the proof of Theorem \ref{THM:2}
by indicating how the analysis in the previous sections
can lead to the proof. 
In Appendix \ref{APP:A}, we 
prove the deterministic non-smoothing
of the Duhamel integral operator
discussed  in 
Remark \ref{REM:Z3_1}.

\medskip

\noi
{\bf Notations:}
We  use $a+$ (and $a-$) to denote $a + \eps$ (and $a - \eps$, respectively) for arbitrarily small $\eps \ll 1$,
where an implicit constant is allowed to depend on $\eps > 0$
(and it usually diverges as $\eps \to 0$).

Given a Banach space $B$ of temporal functions, 
we use the following short-hand notation: $B_T := B([0, T])$.
For example, $L^q_T L^r_x = L^q_t([0, T]; L^r_x(\R^3))$.

Let $\eta: \R \to [0, 1]$ be an even, smooth cutoff function
supported on $[-\frac{8}{5}, \frac{8}{5}]$
such that $\eta \equiv 1$ on $[-\frac{5}{4}, \frac{5}{4}]$.
Given a dyadic number $N \geq 1$, we
set
$\eta_1(\xi) = \eta(|\xi|)$
and
\[\eta_N(\xi) = \eta\bigg(\frac{|\xi|}{N}\bigg) - \eta\bigg(\frac{2|\xi|}{N}\bigg)\]

\noi
for $N \geq 2$.
Then, we define the Littlewood-Paley projection operator
$\P_N$ as the Fourier multiplier operator with symbol $\eta_N$.
In the following, we use the convention that capital letters denote
dyadic numbers.
For example, $N  = 2^n$ for some $n \in \NB_0 : = \NB \cup\{0\}$.

Given dyadic numbers $N_1, \dots, N_4 \in 2^{\NB_0}$, 
we set $N_{\max} :=  \max_{j = 1, \dots, 4} N_j$.
We also use the following shorthand notation:
$f_N = \P_N f$.
For example, we have 
$z_{1, N_j} = \P_{N_j} z_1$.

\section{Strichartz estimates and function spaces}
\label{SEC:2}

\subsection{Probabilistic Strichartz estimates}
First, we recall the usual Strichartz estimates on $\R^3$ for readers' convenience.
We say that
a pair  $(q, r)$ is Schr\"odinger admissible
if it satisfies
\begin{equation*}
\frac{2}{q} + \frac{3}{r} = \frac{3}{2}
\end{equation*}

\noi
 with $2\leq q, r \leq \infty$.
Then, the following Strichartz estimates
are known to hold \cite{Strichartz, Yajima, GV, KeelTao}:
\begin{equation}
\| S(t) \phi\|_{L^q_t L^r_x (\R\times \R^3)} \lesssim \|\phi\|_{L^2_x(\R^3)}.
\label{Str0}
\end{equation}

\noi
It follows from \eqref{Str0} and Sobolev's inequality that 
\begin{equation}
\| S(t) \phi\|_{L^p_{t, x} (\R\times \R^3)}
\lesssim \big\||\nb|^{\frac 32 - \frac{5}{p}}\phi\big\|_{L^2_x(\R^3)}
\label{Str0a}
\end{equation}

\noi
for $p \geq \frac{10}{3}$.
We will use  the following admissible pairs in this paper:
\begin{align*}
(\infty, 2), \ 
\big(5, \tfrac{30}{11}\big), \ 
\big(\tfrac{10}{3}\tfrac{10}{3}\big),
\ (2+, 6-).
\end{align*}

\noi
In particular, by Sobolev's inequality, we have
\begin{align}
W^{\frac 12, \frac{30}{11}}(\R^3) \hookrightarrow L^5(\R^3).
\label{Sobolev}
\end{align}

One of the important key ingredients
for probabilistic well-posedness is
the probabilistic Strichartz estimates.
Such probabilistic estimates were first exploited by 
McKean \cite{McKean} and Bourgain \cite{BO96}.
In the following, we state
the probabilistic Strichartz estimates under the Wiener randomization \eqref{rand}.
See \cite{BOP1} for the proofs.

\begin{lemma}\label{LEM:PStr}
Given $\phi$ on $\R^3$, 
let $\phi^\o$ be its Wiener randomization defined in \eqref{rand}.
Then,
given finite  $q \geq 2$ and $2 \leq  r \leq \infty$,
there exist $C, c>0$ such that
\begin{align*}
P\Big( \|S(t) \phi^\omega\|_{L^q_t L^r_x([0, T]\times \R^3)}> \ld\Big)
\leq C\exp \bigg(-c \frac{\ld^2}{ T^\frac{2}{q}\|\phi\|_{H^s}^{2}}\bigg)
\end{align*}
	
\noi
for all  $ T > 0$ and $\lambda>0$
with  \textup{(i)} $s = 0$ if $r < \infty$
and  \textup{(ii)}  $s > 0$ if $r = \infty$.

\end{lemma}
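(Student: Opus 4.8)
The plan is to prove the probabilistic Strichartz estimate of Lemma \ref{LEM:PStr} by reducing the large-deviation bound for the space-time norm of $S(t)\phi^\o$ to a Gaussian-type tail estimate, which in turn follows from Minkowski's inequality together with the moment control coming from the exponential moment assumption \eqref{exp1}. First I would invoke the standard Khintchine-type inequality for the sequence $\{g_n\}_{n\in\Z^3}$: the hypothesis \eqref{exp1} forces each $g_n$ to have Gaussian moment growth, so for any finite $p\geq 2$ and any sequence $\{c_n\}\in\l^2(\Z^3)$ (with values in a Banach space, applied pointwise in $(t,x)$),
\begin{align*}
\Big\| \sum_{n} g_n(\o) c_n \Big\|_{L^p_\o(\O)} \leq C\sqrt{p}\, \Big( \sum_n |c_n|^2 \Big)^{1/2}.
\end{align*}
Here $c_n = c_n(t,x) := \psi(D-n)\big(S(t)\phi\big)(x) = S(t)\big[\psi(D-n)\phi\big](x)$, since $S(t)$ commutes with the Fourier multiplier $\psi(D-n)$.

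Next I would estimate the $L^p_\o$-norm of the full space-time norm. Since $q, r < \infty$ (for part (i); part (ii) is handled by the analogous statement with $r=\infty$ replaced by a large finite $r$ and Sobolev embedding absorbing the $s>0$ loss), for $p \geq \max(q,r)$ Minkowski's inequality lets us interchange the $L^p_\o$ norm with the $L^q_t L^r_x$ norm:
\begin{align*}
\big\| \|S(t)\phi^\o\|_{L^q_T L^r_x} \big\|_{L^p_\o}
\leq \Big\| \big\| \|S(t)\phi^\o\|_{L^p_\o} \big\|_{L^r_x} \Big\|_{L^q_T}
\leq C\sqrt{p}\, \bigg\| \Big( \sum_n |S(t)\psi(D-n)\phi|^2 \Big)^{1/2} \bigg\|_{L^q_T L^r_x}.
\end{align*}
The square-function on the right is then controlled by summing the deterministic Strichartz estimate \eqref{Str0} over the unit-scale frequency pieces $\psi(D-n)\phi$: by the (almost) orthogonality of the projections and the fact that the pair $(q,r)$ is Schr\"odinger admissible (or by using admissibility together with interpolation to reach a general $q\geq 2$), one gets
\begin{align*}
\bigg\| \Big( \sum_n |S(t)\psi(D-n)\phi|^2 \Big)^{1/2} \bigg\|_{L^q_T L^r_x}
\lesssim T^{1/q} \Big( \sum_n \|\psi(D-n)\phi\|_{L^2_x}^2 \Big)^{1/2} \lesssim T^{1/q}\|\phi\|_{L^2_x},
\end{align*}
where the factor $T^{1/q}$ is produced by H\"older in time on the interval $[0,T]$ when passing from a space-time Strichartz pair to a pair with a smaller temporal exponent. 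Combining the two displays gives $\big\| \|S(t)\phi^\o\|_{L^q_T L^r_x} \big\|_{L^p_\o} \leq C\sqrt{p}\, T^{1/q}\|\phi\|_{H^s}$ for all $p \geq p_0 := \max(q,r)$.

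Finally I would convert this moment bound into the stated exponential tail via the standard Chebyshev optimization: for $\ld > 0$,
\begin{align*}
P\big( \|S(t)\phi^\o\|_{L^q_T L^r_x} > \ld \big)
\leq \ld^{-p} \big\| \|S(t)\phi^\o\|_{L^q_T L^r_x} \big\|_{L^p_\o}^p
\leq \Big( \frac{C\sqrt{p}\, T^{1/q}\|\phi\|_{H^s}}{\ld} \Big)^p,
\end{align*}
and choosing $p = c\,\ld^2 \big(T^{2/q}\|\phi\|_{H^s}^2\big)^{-1}$ (valid once this quantity exceeds $p_0$, the small-$\ld$ range being absorbed into the constant $C$ by taking $C$ large enough that the bound is trivial there) yields the claimed estimate $P\big( \|S(t)\phi^\o\|_{L^q_T L^r_x} > \ld \big) \leq C\exp\big(-c\,\ld^2 T^{-2/q}\|\phi\|_{H^s}^{-2}\big)$. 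For part (ii), where $r=\infty$, the same scheme applies after replacing $L^\infty_x$ by $W^{\eps,r_1}_x \hookrightarrow L^\infty_x$ for large finite $r_1$ and small $\eps>0$, which is exactly where the restriction $s>0$ enters. The main obstacle I anticipate is the bookkeeping in the square-function step: getting the clean $T^{1/q}$ time factor for a \emph{general} admissible-or-not $q\geq 2$ requires care in choosing the right admissible pair to run Strichartz on and then using H\"older in $t$ — but since the proofs are already carried out in \cite{BOP1}, I would simply cite them, and the role of this proof in the paper is essentially expository recall rather than a new argument.
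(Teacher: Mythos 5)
Your overall scheme is the correct one and matches the approach carried out in \cite{BOP1} (which the paper cites rather than re-proving): a Khintchine-type inequality from the exponential moment bound \eqref{exp1}, Minkowski to pull $L^p_\o$ inside $L^q_T L^r_x$ for $p \geq \max(q,r)$, then Chebyshev with optimization in $p$ to produce the Gaussian tail, and Sobolev embedding to reduce $r=\infty$ to a large finite $r$ at the cost of $s>0$.

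However, the square-function step contains a genuine gap. You claim that
\begin{align*}
\bigg\| \Big( \sum_n |S(t)\psi(D-n)\phi|^2 \Big)^{1/2} \bigg\|_{L^q_T L^r_x} \lesssim T^{1/q}\|\phi\|_{L^2_x}
\end{align*}
follows from ``summing the deterministic Strichartz estimate \eqref{Str0} over the pieces'' together with the admissibility of $(q,r)$ and interpolation. But the lemma makes no admissibility assumption on $(q,r)$: it asserts the bound (with $s=0$) for \emph{every} finite $q\geq 2$ and \emph{every} $2\leq r<\infty$. Strichartz together with Sobolev and interpolation against $L^\infty_t L^2_x$ can only reach spatial exponents $r\leq 6$ at the $L^2_x$ level in three dimensions; for $r>6$ one would have to pay derivatives, which contradicts the $s=0$ claim. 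Moreover, even when an admissible pair $(q_*,r)$ with $q_*\geq 2$ exists, H\"older in time from the global Strichartz estimate produces the factor $T^{1/q-1/q_*}$, not $T^{1/q}$. The ingredient you are missing --- and this is precisely the point of the Wiener randomization --- is Bernstein's inequality on the unit cubes: since $S(t)\psi(D-n)\phi$ has Fourier support in $n+[-1,1]^3$ for all $t$, one has $\|S(t)\psi(D-n)\phi\|_{L^r_x}\lesssim \|S(t)\psi(D-n)\phi\|_{L^2_x}=\|\psi(D-n)\phi\|_{L^2_x}$ for every $r\geq 2$, \emph{uniformly in $t$}, with implicit constant independent of $n$. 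Combining this with Minkowski in $n$ gives the $L^\infty_T L^r_x$ bound $\|\phi\|_{L^2}$, and $L^q_T$ over $[0,T]$ then supplies the clean $T^{1/q}$ factor. No dispersive estimate is used; replacing Bernstein by Strichartz removes exactly the gain of integrability that the lemma is asserting.
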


A similar estimate holds when $q = \infty$ (with $s > 0$)
but we will not need it in this paper.  See \cite{OP}.
We also need the following lemma on the control of the size of $H^s$-norm of $\phi^\o$.

\begin{lemma} \label{LEM:Hs}
Given  $\phi \in H^s(\R^3)$, let $\phi^\o$ be its Wiener randomization
defined in \eqref{rand}.
Then, we have
\begin{align*}
P\Big( \| \phi^\omega \|_{ H^s(  \R^3)} > \ld\Big)
\leq  
C\exp \bigg(-c \frac{\ld^2}{ \|\phi\|_{H^s}^{2}}\bigg)
\end{align*}

\noi
for all   $\lambda>0$.

\end{lemma}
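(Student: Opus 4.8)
The plan is to reduce the tail estimate on $\|\phi^\o\|_{H^s}$ to a scalar Gaussian-type tail bound, via Minkowski's and Chebyshev's inequalities, exploiting only the exponential moment assumption \eqref{exp1} on the random variables $g_n$. First I would write out the $H^s$-norm of $\phi^\o = \sum_n g_n(\o)\,\psi(D-n)\phi$ in terms of the frequency pieces. Since the Fourier supports of $\psi(\xi-n)$ have bounded overlap (they are supported in $n + [-1,1]^3$), one has, for a fixed realization, the almost-orthogonality bound
\begin{align*}
\|\phi^\o\|_{H^s(\R^3)}^2 \lesssim \sum_{n \in \Z^3} |g_n(\o)|^2\, \|\psi(D-n)\phi\|_{H^s(\R^3)}^2 =: \sum_{n\in\Z^3} |g_n(\o)|^2\, c_n^2,
\end{align*}
where $\{c_n\}_{n} \in \ell^2(\Z^3)$ with $\big(\sum_n c_n^2\big)^{1/2} \lesssim \|\phi\|_{H^s(\R^3)}$ (again by bounded overlap of the frequency localizations, now in the opposite direction). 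Thus it suffices to estimate the tail of $\big(\sum_n |g_n|^2 c_n^2\big)^{1/2}$.

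Next I would control the $p$-th moment of $X := \big(\sum_n |g_n|^2 c_n^2\big)^{1/2}$ for $p \geq 2$. By Minkowski's inequality in $L^{p/2}(\O)$ applied to the sum $\sum_n |g_n|^2 c_n^2$,
\begin{align*}
\big\| X \big\|_{L^p(\O)}^2 = \Big\| \sum_n |g_n|^2 c_n^2 \Big\|_{L^{p/2}(\O)} \leq \sum_n c_n^2\, \big\| |g_n|^2 \big\|_{L^{p/2}(\O)} = \sum_n c_n^2\, \big\| g_n \big\|_{L^p(\O)}^2.
\end{align*}
The exponential moment bound \eqref{exp1} implies a sub-Gaussian bound on each $g_n$: expanding $e^{\kappa\cdot x}$ and optimizing, or directly from \eqref{exp1}, one gets $\|g_n\|_{L^p(\O)} \leq C\sqrt{p}$ uniformly in $n$, for all $p \geq 1$. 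Hence
\begin{align*}
\big\| X \big\|_{L^p(\O)} \leq C\sqrt{p}\, \Big( \sum_n c_n^2 \Big)^{1/2} \lesssim \sqrt{p}\, \|\phi\|_{H^s(\R^3)}.
\end{align*}
Combining with the deterministic bound above, $\|\,\|\phi^\o\|_{H^s}\,\|_{L^p(\O)} \leq C_1\sqrt{p}\,\|\phi\|_{H^s}$ for all $p \geq 2$.

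Finally I would upgrade this moment bound to the stated exponential tail bound by the standard Chebyshev argument: for $\ld > 0$ and any $p \geq 2$,
\begin{align*}
P\big( \|\phi^\o\|_{H^s} > \ld \big) \leq \ld^{-p}\, \big\| \|\phi^\o\|_{H^s} \big\|_{L^p(\O)}^p \leq \Big( \frac{C_1 \sqrt{p}\, \|\phi\|_{H^s}}{\ld} \Big)^p;
\end{align*}
choosing $p = \big(\frac{\ld}{e C_1 \|\phi\|_{H^s}}\big)^2$ (valid when this is $\geq 2$, and handling the remaining range of $\ld$ by adjusting the constant $C$) yields $P(\|\phi^\o\|_{H^s} > \ld) \leq C\exp\!\big(-c\,\ld^2/\|\phi\|_{H^s}^2\big)$, as claimed. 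The only mildly delicate point is the passage from \eqref{exp1} to the uniform moment bound $\|g_n\|_{L^p} \lesssim \sqrt p$ — i.e., recognizing \eqref{exp1} as a genuine sub-Gaussian condition for complex-valued $g_n$ — but this is standard; everything else is routine. (This is, of course, exactly the argument indicated in \cite{BOP1}, to which the statement refers.)
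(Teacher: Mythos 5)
Your argument is correct and follows essentially the same route as the proof in \cite{BOP1} to which the paper refers: obtain the $p$th moment bound $\big\|\,\|\phi^\o\|_{H^s}\,\big\|_{L^p(\O)} \lesssim \sqrt p\,\|\phi\|_{H^s}$ for $p\ge 2$, then upgrade it to the exponential tail by Chebyshev's inequality and optimization in $p$. The one minor deviation is in how you get the moment bound: you first use a pathwise Cauchy--Schwarz inequality exploiting the bounded overlap of the supports of $\psi(\cdot - n)$ to reduce $\|\phi^\o\|_{H^s}^2$ to the diagonal sum $\sum_n |g_n|^2 c_n^2$, then apply Minkowski together with the one-variable sub-Gaussian moment bound $\|g_n\|_{L^p}\lesssim \sqrt p$ coming from \eqref{exp1}, whereas the usual treatment (as in \cite{BOP1}) applies Minkowski in the $\xi$-integral and then a Khintchine-type large deviation estimate for $\sum_n g_n c_n$. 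Both are standard and give the same constant-up-to-$O(1)$ bound; yours is marginally more elementary and, as a side effect, does not use independence of the $g_n$, only the uniform sub-Gaussian moments.
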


\subsection{Function spaces and their properties}
\label{SUBSEC:Up}

In this subsection, we go over the basic definitions 
and properties of the functions spaces
used for the Fourier restriction norm method 
(i.e.~analysis involving the $X^{s, b}$-spaces introduced in \cite{Bo2})
adapted
to the space  of  functions of bounded $p$-variation and its pre-dual,
introduced and developed by 
Tataru, Koch, and their collaborators \cite{KochT, HHK, HTT}.
We refer readers to   \cite{HHK, HTT}
for the proofs of the basic properties. See also~\cite{BOP2}.

Let $\mathcal{Z}$ be the set of finite partitions $-\infty <t_{0}<t_{1}<\dots <t_{K} \le \infty $ of the real line.
By convention, we set $u(t_K):=0$  if $t_K=\infty$.
We use $\ind_I$ to denote the sharp characteristic function
of a set $I \subset \R$.

 \begin{definition} \label{DEF:X1}\rm
Let $1\leq p < \infty$.

\smallskip
\noi
\textup{(i)}
We define a $U^p$-atom to be 
a step function $a: \R \to L^2(\R^3)$
of the form: 
\begin{equation*}
a=\sum_{k=1}^{K}\phi _{k-1} \ind _{[t_{k-1},t_{k})}, 
\end{equation*}

\noi
where
  $\{t_{k}\}_{k=0}^{K}\in \mathcal{Z}$ and $\{\phi _{k}\}_{k=0}^{K-1}\subset L^2 (\R^3)$ with 
$\sum_{k=0}^{K-1} \| \phi _{k} \|_{L^{2}}^{p}=1$.
Furthermore, we define the atomic space $U^p = U^p(\R; L^2(\R^3))$
by 
\begin{align*}
U^{p}:= \bigg\{ u: \R \rightarrow L^2 (\R^3) : u=\sum_{j=1}^{\infty }\lambda _{j}a_{j}\ & \text{for $U^{p}$-atoms $a_{j}$},
\ \{\ld_j\}_{j \in \NB} \in \l^1(\NB; \C)\bigg\}
\end{align*}

\noi
with the norm 
\begin{align*}
\| u \| _{U^{p}}:= \inf \bigg\{ \sum_{j=1}^{\infty }|\lambda _{j}|:u=\sum_{j=1}^{\infty } \lambda _{j}a_{j}
 \text{ for $U^{p}$-atoms } a_{j},  \
 \{\ld_j\}_{j \in \NB} \in \l^1(\NB; \C)\bigg\},
\end{align*}

\noi
where the infimum is taken over all possible representations for $u$.

\smallskip

\noi
(ii)  We define $V^{p}= V^p(\R; L^2(\R^3))$ to be 
the space of functions $u:\R\to L^2(\R^3)$ of bounded $p$-variation
with the standard $p$-variation norm:
\begin{equation*}
\| u \| _{V^{p}}:=\sup_{\{t_{k}\}_{k=0}^{K}\in \mathcal{Z}}
\bigg( \sum_{k=1}^{K}\| u(t_{k})-u(t_{k-1})\| _{L^{2}}^{p}\bigg) ^{\frac 1p}.
\end{equation*}

\noi
By convention, we impose that  the limits $\lim _{t \to \pm \infty}u(t)$ exist in $L^2 (\R^3)$.

\smallskip

\noi
(iii) Let $V_{\text{rc}}^{p}$ be the closed subspace of $V^{p}$ 
of all right-continuous functions $u \in V^p$ with  $\lim_{t\rightarrow -\infty }u(t)=0$.

\smallskip

\noi
(iv)
We define  $U_{\Delta}^{p}:=S (t) U^{p}$ (and $V_{\Delta}^{p}:=S(t)V^{p}$, respectively)
to be the space of all functions $u:\R \to L^2 (\R^3)$ such that 
$t \to S ( -t ) u(t)$ is in $U^p$ (and in $V^p$, respectively)
with the norms 
\begin{equation*}
\| u \|_{U_{\Delta}^{p}} :=\| S(-t )u \|_{U^{p}}
\qquad \text{and} \qquad
\| u \|_{V_{\Delta}^p} :=\| S(-t)u \|_{V^{p}}.
\end{equation*}

\noi
 The closed subspace $V_{\text{rc}, \Delta}^{p}$ is defined in an analogous manner.

 \end{definition}

Recall the following inclusion relation; 
for $1\leq p<q<\infty $, 
\begin{align*}
U^p \hookrightarrow V_{\text{rc}}^{p}\hookrightarrow U^{q} \hookrightarrow L^{\infty }(\R ;L^2 (\R^3)).
\end{align*}

\noi
The space $V^p$ is the classical space of functions of bounded $p$-variation
and the space $U^p$ appears as the pre-dual of $V^{p'}$ with $\frac 1p + \frac 1{p'} = 1$,
$1 < p < \infty$.
Their duality relation and the atomic structure of the $U^p$-space
turned out to be very effective in studying dispersive PDEs in critical settings.

We are now ready to define the solution spaces.

\begin{definition} \label{DEF:X3}
\rm
\textup{(i)}
Let $s\in \R$.
We define $X^s(\R)$ to be 
 the closure of 
 $C(\R ;H^s(\R ^3)) \cap U_{\Delta}^2 $ 
with respect to the $X^s$-norm defined by
\[ \|u \|_{X^s (\R)} : = \bigg(
\sum_{\substack{N\geq 1\\\textup{dyadic}}} N^{2s}
\| \P_N u \|_{U^2_{\Dl}L^2}^2
\bigg)^\frac{1}{2}.
\]

\smallskip
\noi
\textup{(ii)}
Let $s\in \R$.
We define $Y^s(\R)$ to be 
 the space of all functions $u \in C(\R ;H^s(\R ^3))$
such that the map $t \mapsto \P_N u$ lies in $ V_{\text{rc},\Delta}^2 H^s$
for any $N \in 2^{\NB_0}$
and $\| u \|_{Y^s(\R)}< \infty$, 
where  
  the $Y^s$-norm  is defined by
\[ \|u \|_{Y^s(\R)} : = \bigg( \sum_{\substack{N\geq 1\\\textup{dyadic}}} N^{2s}
\| \P_N u\|_{V^2_\Dl L^2}^2\bigg)^\frac{1}{2}.
\]

\end{definition}

Recall the following embeddings:
\begin{equation*}
U^2_\Dl H^s \hookrightarrow X^s \hookrightarrow
Y^s \hookrightarrow V^2_\Dl H^s \hookrightarrow U^p_\Dl H^s,
\end{equation*}
for $p>2$.

Given an  interval $I \subset \R$,
we define the local-in-time versions $X^s(I)$
and $Y^s(I)$ of these spaces
as restriction norms.
For example, we define the $X^s(I)$-norm by
\[ \|u \|_{X^s(I)} = \inf\big\{ \|v\|_{X^s(\R)}: \, v|_I = u\big\}.\]

\noi
We also define the norm for the nonhomogeneous term
on an interval $I = [t_0, t_1)$:
\begin{align*}
\| F\|_{N^s(I)} = \bigg\|\int_{t_0}^t S(t - t') F(t') dt'\bigg\|_{X^s(I)}.
\end{align*}

We conclude this section by presenting
some basic
estimates involving these function spaces.	
See \cite{HHK, HTT, BOP2} for the proofs.

\begin{lemma} \label{LEM:lin}
Let $s \in \R$ and $T\in ( 0,\infty ]$.
Then,  the following linear estimates hold: 
\begin{align*}
\|   S(t) \phi \|_{X^s([0, T])}
& \leq \|\phi\|_{H^s}, \\
\|  F \|_{N^{s}([0, T])}
& \le \sup_{\substack{w \in Y^{-s}([0, T])\\ \| w \|_{Y^{-s}([0, T])}=1}} \left| \int_{0}^{T} \lr{F(t), w(t)}_{L_{x}^{2}} dt \right|
\end{align*}

\noi
 for any $\phi \in H^s(\R^3)$
 and $F \in L^1([0,T];H^{s}(\R^3))$.
\end{lemma}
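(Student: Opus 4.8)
The plan is to deduce both inequalities from the atomic and variational structure of the $U^p$--$V^p$ spaces developed in \cite{HHK, HTT}, so that the argument is essentially bookkeeping. For the first estimate I would use that $\P_N$ commutes with $S(t)$, so that $S(-t)\P_N S(t)\phi=\P_N\phi$ is constant in time. Since $X^s([0,T])$ is a restriction norm, it suffices to exhibit a good global extension: take $v(t):=\ind_J(t)\,S(t)\phi$ with $J\supset[0,T]$ a bounded interval. Then $S(-t)\P_N v(t)=\ind_J(t)\,\P_N\phi$ is exactly $\|\P_N\phi\|_{L^2}$ times a single $U^2$-atom, hence $\|\P_N v\|_{U^2_\Delta L^2}\le\|\P_N\phi\|_{L^2}$; squaring, weighting by $N^{2s}$, and summing over dyadic $N$ gives $\|S(t)\phi\|_{X^s([0,T])}\le\|v\|_{X^s(\R)}\le\|\phi\|_{H^s}$.

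For the dual estimate, set $I=[0,T)$ and $u_F(t):=\int_0^t S(t-t')F(t')\,dt'$, and note that $\P_N u_F=\int_0^t S(t-t')\P_N F(t')\,dt'$, so $\|F\|_{N^s(I)}^2=\sum_N N^{2s}a_N^2$ with $a_N:=\|\P_N u_F\|_{U^2_\Delta L^2(I)}$. The key input I would invoke is the $U^2_\Delta$--$V^2_\Delta$ duality together with the adjoint identity for the Duhamel operator from \cite{HHK}:
\[
a_N=\sup_{\substack{v\in V^2_{\text{rc},\Delta}L^2\\ \|v\|_{V^2_\Delta L^2(I)}\le 1}}\bigg|\int_I\langle \P_N F(t),v(t)\rangle_{L^2_x}\,dt\bigg|.
\]
Fix $\delta>0$; for each dyadic $N$ choose a near-optimal $v_N$ and set $\widetilde v_N:=\P_N v_N$. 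Since $\P_N$ is an $L^2$-contraction commuting with $S(-t)$, we have $\|\widetilde v_N\|_{V^2_\Delta L^2(I)}\le 1$, and since $\P_N$ is self-adjoint ($\eta_N$ being real-valued), $\int_I\langle F,\widetilde v_N\rangle_{L^2_x}=\int_I\langle\P_N F,v_N\rangle_{L^2_x}\ge(1-\delta)a_N$ after multiplying $v_N$ by a suitable unimodular constant. Put $w:=\|F\|_{N^s(I)}^{-1}\sum_N N^{2s}a_N\widetilde v_N$. The frequency supports of the $\widetilde v_N$ are finitely overlapping, so $\P_M w$ is a sum of at most three terms with $N\sim M$, and $\ell^2$-summation together with $\|\widetilde v_N\|_{V^2_\Delta L^2(I)}\le 1$ yields $\|w\|_{Y^{-s}(I)}\lesssim 1$; meanwhile $\int_I\langle F(t),w(t)\rangle_{L^2_x}\,dt\ge(1-\delta)\|F\|_{N^s(I)}^{-1}\sum_N N^{2s}a_N^2=(1-\delta)\|F\|_{N^s(I)}$. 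Renormalizing $w$ by $\|w\|_{Y^{-s}(I)}$ and letting $\delta\to0$ gives the claimed bound, with the sharp constant obtained by carrying out the $\ell^2$- and $U^2_\Delta$--$V^2_\Delta$-dualities simultaneously, as in \cite{HHK, HTT, BOP2}.

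The step needing the most care is the invocation of the $U^2_\Delta$--$V^2_\Delta$ duality and the adjoint identity for the Duhamel operator, specifically the endpoint and right-continuity normalizations underlying them (all established in \cite{HHK}); the Littlewood-Paley almost-orthogonality in the gluing step is routine, and no genuinely new difficulty arises, which is why a full proof may be found in \cite{HHK, HTT, BOP2}.
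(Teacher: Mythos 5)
Your proposal is essentially correct and follows the same route as the references the paper itself cites for this lemma (\cite{HHK, HTT, BOP2}); the paper gives no independent proof. The first estimate is indeed just the observation that $S(-t)\P_N\ind_J(t)S(t)\phi=\ind_J(t)\P_N\phi$ is a (multiple of a) single $U^2$-atom, followed by $\ell^2$-summation; the second follows from the scalar $U^2_\Delta$--$V^2_\Delta$ duality and the Duhamel adjoint identity (Proposition 2.10/2.11 in \cite{HHK, HTT}) combined with $\ell^2$-duality.

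Two small points. First, the lemma allows $T=\infty$, for which a bounded $J$ does not contain $[0,T]$; simply take $J=[0,\infty)$, under which $\ind_{[0,\infty)}(t)\P_N\phi$ is still a $U^2$-atom (with partition $t_0=0$, $t_1=\infty$). Second, and more substantively, your gluing step --- picking near-optimal $v_N$ per dyadic block, projecting, and summing with weights --- produces a test function with $\|w\|_{Y^{-s}}\lesssim 1$ rather than $\le 1$, because the smooth Littlewood--Paley symbols $\eta_N$ used in this paper overlap over finitely many scales, so you pick up a fixed combinatorial constant in the $\ell^2$-gluing. This only yields the lemma with an implicit constant, while the statement asserts constant $1$. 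You flag this honestly and defer the sharp constant to the references; for the paper's applications the distinction is irrelevant, but strictly speaking obtaining the constant $1$ requires either running the $\ell^2$- and $U^2$--$V^2$-dualities simultaneously with genuinely orthogonal frequency projections (as is the case for the sharp cutoffs used in \cite{HTT}) or a more careful treatment of the cross-terms $\langle\P_N F,\P_M v\rangle$ for $M\sim N$, $M\ne N$. As presented, that last step is sketched rather than carried out.
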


The transference principle \cite[Proposition 2.19]{HHK}
and the interpolation lemma 
\cite[Proposition 2.20]{HHK}
applied on the Strichartz estimates \eqref{Str0} and \eqref{Str0a}
imply the following estimates.

\begin{lemma} \label{LEM:Str}
Given any  admissible pair  $(q,r)$  with $q>2$
and $p \geq \frac{10}{3}$, 
 we have
\begin{align*}
\| u \|_{L_t^q L_x^r} & \lesssim \| u \|_{Y^0}, \\
\|  u \|_{L^p_{t,x}}
&  \les  \big\||\nb|^{\frac 32 - \frac {5}p} u\big\|_{Y^0}.
\end{align*}
\end{lemma}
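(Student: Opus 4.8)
The plan is to reduce both inequalities to the linear Strichartz estimates \eqref{Str0} and \eqref{Str0a} by a frequency-localized transference argument, and then to reassemble the dyadic pieces by a Littlewood--Paley square function estimate. First I would fix a dyadic number $N \geq 1$ and note that, since $\P_N$ is bounded on $L^2(\R^3)$ uniformly in $N$, the estimate \eqref{Str0} gives $\| \P_N S(t)\phi\|_{L^q_t L^r_x} \les \|\phi\|_{L^2}$ for every admissible pair $(q,r)$. Applying the transference principle \cite[Proposition 2.19]{HHK} to the $L^2$-bounded operator $\P_N$ then upgrades this to the bound $\| \P_N u\|_{L^q_t L^r_x} \les \|\P_N u\|_{U^q_\Dl L^2}$. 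Since the hypothesis $q > 2$ makes the embedding $V^2_{\mathrm{rc},\Dl} \hookrightarrow U^q_\Dl$ available (this is also the content of the interpolation lemma \cite[Proposition 2.20]{HHK}), we obtain the per-frequency bound $\|\P_N u\|_{L^q_t L^r_x} \les \|\P_N u\|_{V^2_\Dl L^2}$.

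To pass from a single block to the full norm, I would invoke the Littlewood--Paley inequality (legitimate since an admissible pair with $q>2$ necessarily satisfies $2 \leq r < 6$, in particular $1 < r < \infty$) followed by Minkowski's inequality, which is permissible because $q \geq 2$ and $r \geq 2$:
\[
\| u\|_{L^q_t L^r_x} \les \bigg\| \bigg( \sum_{N} |\P_N u|^2\bigg)^{\frac12}\bigg\|_{L^q_t L^r_x} \leq \bigg( \sum_N \|\P_N u\|_{L^q_t L^r_x}^2 \bigg)^{\frac12} \les \bigg( \sum_N \|\P_N u\|_{V^2_\Dl L^2}^2\bigg)^{\frac12} = \| u\|_{Y^0}.
\]
This gives the first estimate (the degenerate case $q = \infty$, $r = 2$ being immediate from $Y^0 \hookrightarrow V^2_\Dl L^2 \hookrightarrow L^\infty_t L^2_x$). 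For the second estimate I would run exactly the same scheme starting from \eqref{Str0a}: frequency localization yields $\| \P_N S(t)\phi\|_{L^p_{t,x}} \les N^{\frac32 - \frac5p}\|\phi\|_{L^2}$ for $p \geq \tfrac{10}{3}$, transference gives $\| \P_N u\|_{L^p_{t,x}} \les N^{\frac32 - \frac5p}\|\P_N u\|_{U^p_\Dl L^2}$, and since $p \geq \tfrac{10}{3} > 2$ the embedding $V^2_{\mathrm{rc},\Dl}\hookrightarrow U^p_\Dl$ replaces $U^p_\Dl L^2$ by $V^2_\Dl L^2$. Squaring, summing over dyadic $N$ via the Littlewood--Paley inequality and Minkowski's inequality (using $p \geq 2$), and absorbing the factor $N^{\frac32 - \frac5p}$ into the derivative $|\nb|^{\frac32 - \frac5p}$ yields $\| u\|_{L^p_{t,x}} \les \big\| |\nb|^{\frac32 - \frac5p} u\big\|_{Y^0}$.

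The argument is essentially routine, so I do not anticipate a genuine obstacle; the only points that need care are (i) applying the transference principle to the concrete operator $\P_N$ rather than to a generic one, so that the conclusion is localized to a single Littlewood--Paley block and the subsequent square-function reassembly makes sense; (ii) the strict inequalities $q > 2$ and $p > 2$, which are precisely what make $V^2_\Dl \hookrightarrow U^q_\Dl$ (resp. $V^2_\Dl \hookrightarrow U^p_\Dl$) valid, and which correspondingly explain why the endpoint admissible pair $(q,r) = (2,6)$ is excluded; and (iii) checking that the relevant exponents all lie in $[2,\infty)$ so that Minkowski's inequality may be used to interchange the $\ell^2_N$-sum with the space-time norm.
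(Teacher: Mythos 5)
Your argument takes the same route that the paper's one-line proof indicates -- transference applied to the Strichartz estimates \eqref{Str0}, \eqref{Str0a}, followed by passage from $U^q_\Dl$ (resp.~$U^p_\Dl$) to $V^2_\Dl$ -- with the added explicit step of frequency localization and Littlewood--Paley reassembly to land in $Y^0$. The reasoning for the first inequality is correct, including the observation that every admissible $(q,r)$ with $q>2$ satisfies $2 \le r < 6$ (so Littlewood--Paley and Minkowski apply) and the separate treatment of the degenerate pair $(\infty,2)$.

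Two small remarks on the second inequality. First, the step ``absorbing the factor $N^{3/2-5/p}$ into the derivative $|\nabla|^{3/2-5/p}$'' is not valid at the block $N=1$: there one has $N^{s}\|\P_1 u\|_{V^2_\Dl L^2} = \|\P_1 u\|_{V^2_\Dl L^2}$, and this is \emph{not} controlled by $\||\nabla|^{s}\P_1 u\|_{V^2_\Dl L^2}$, since the homogeneous symbol $|\xi|^{s}$ is small near the origin where $\P_1$ has full strength. As written, your chain therefore yields the inhomogeneous bound $\|u\|_{L^p_{t,x}} \les \|\jb{\nabla}^{3/2-5/p} u\|_{Y^0}$ rather than the homogeneous one stated. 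The fix is painless: keep the per-block bound in the form $\|\P_N u\|_{L^p_{t,x}} \les \||\nabla|^{s}\P_N u\|_{V^2_\Dl L^2}$ by applying transference to the map $\psi \mapsto S(t)|\nabla|^{-s}\psi$ (which is $L^2 \to L^p_{t,x}$ bounded by \eqref{Str0a}) instead of pulling out the crude factor $N^{s}$; alternatively, drop the dyadic decomposition altogether and invoke the embedding $Y^0 \hookrightarrow V^2_\Dl L^2$ at the very end. Second, attributing the inclusion $V^2_{\mathrm{rc},\Dl} \hookrightarrow U^q_\Dl$ ($q>2$) to HHK's interpolation lemma (Proposition~2.20) is a slight misattribution: that inclusion is the elementary nesting of the $U^p/V^p$ scales, recalled in the paper immediately before Definition~\ref{DEF:X3}, whereas Proposition~2.20 is the finer statement that trades a logarithmic loss between two $U^p$-bounds to produce a $V^2$-bound with good constant. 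Since the Strichartz constants here carry no small parameter, the basic inclusion is all that is needed and the interpolation lemma proper is never invoked in your argument (nor does it need to be).
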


Similarly, the bilinear refinement of the Strichartz estimate \cite{Bo98, OzawaT, CKSTT}
implies the following bilinear estimate.

\begin{lemma} \label{LEM:biStr}
Let $N_1, N_2 \in 2^{\NB _0}$ with $N_1 \le N_2$.
Then,   we  have 
\begin{align}
\| \P_{N_1}u_1 \P_{N_2}u_2\|_{L^2_{t} ([0, T]; L^2_x)}
\les T^{0+}N_1^{1-} N_2^{-\frac{1}{2}+}
\| \P_{N_1} u_1 \|_{Y^0([0, T])} \| \P_{N_2} u_2 \|_{Y^0([0, T])} 
\label{biStr2}
\end{align}

\noi
for any $T > 0$ and  $u_1, u_2 \in Y^0([0, T])$.

\end{lemma}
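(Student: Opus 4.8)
The plan is to deduce \eqref{biStr2} from the classical bilinear refinement of the Strichartz estimate for the free Schr\"odinger evolution, transferred to the adapted function spaces $U^2_\Delta$ and $V^2_\Delta$. First recall the free estimate \cite{Bo98, OzawaT, CKSTT}: if $\phi_1,\phi_2\in L^2(\R^3)$ with $\supp\widehat{\phi_j}\subset\{|\xi|\sim N_j\}$ and $N_1\le N_2$, then
\[
\| S(t)\phi_1\cdot S(t)\phi_2\|_{L^2_{t,x}(\R\times\R^3)}\les N_1N_2^{-\frac12}\|\phi_1\|_{L^2}\|\phi_2\|_{L^2},
\]
the gain over the linear Strichartz estimate reflecting the transversality (and curvature) of the two frequency-localized paraboloids; when $N_1\sim N_2$ this reduces, up to an $N^{0+}$ loss, to an $L^4_{t,x}$-type bound obtained by interpolating the admissible pairs in Lemma~\ref{LEM:Str}.

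Next I would transfer this to $U^2_\Delta L^2$. Writing $S(-t)\P_{N_j}u_j$ as a superposition of $U^2$-atoms and using bilinearity, one is reduced to the case where each $\P_{N_j}u_j$ equals a free evolution $S(t)\phi^{(j)}_k$ on the $k$-th cell of a partition of $\R$ with $\sum_k\|\phi^{(j)}_k\|_{L^2}^2=1$; applying the free estimate on each product cell (after enlarging the time integration to all of $\R$) and summing the squared $L^2$-norms over all pairs of cells, which factors, gives
\[
\|\P_{N_1}u_1\cdot\P_{N_2}u_2\|_{L^2_{t,x}(\R\times\R^3)}\les N_1N_2^{-\frac12}\|\P_{N_1}u_1\|_{U^2_\Delta L^2}\|\P_{N_2}u_2\|_{U^2_\Delta L^2}.
\]
Since for a single dyadic block $\|\P_N u\|_{Y^0}$ is comparable to $\|\P_N u\|_{V^2_\Delta L^2}$, it then remains to replace $U^2_\Delta$ by $V^2_\Delta$. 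For this I would invoke the interpolation lemma \cite[Proposition~2.20]{HHK}, combining the bound above with the crude companion estimate
\[
\|\P_{N_1}u_1\cdot\P_{N_2}u_2\|_{L^2_TL^2_x}\le T^{\frac12}\|\P_{N_1}u_1\|_{L^\infty_TL^\infty_x}\|\P_{N_2}u_2\|_{L^\infty_TL^2_x}\les T^{\frac12}N_1^{\frac32}\|\P_{N_1}u_1\|_{V^2_\Delta L^2}\|\P_{N_2}u_2\|_{V^2_\Delta L^2}
\]
(Bernstein together with $V^2_\Delta L^2\hookrightarrow L^\infty_tL^2_x$) and its analogue in $U^q_\Delta$ for large $q$. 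Interpolating these trades the logarithmic $U^2\to V^2$ loss and a small power of $T$ against an arbitrarily small power of $N_1N_2$, yielding a bound of the claimed form $T^{0+}N_1^{1-}N_2^{-\frac12+}$ times the product of the $V^2_\Delta L^2$-norms; passing from $\R$ to $[0,T]$ is then immediate, since the $Y^0([0,T])$ and $X^0([0,T])$ norms are restriction norms.

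The only genuinely delicate point is this last passage from $U^2_\Delta$ to $V^2_\Delta$: because $V^2_\Delta$ (hence $Y^0$) sits at the endpoint of the $U^p$--$V^p$ scale, the bilinear bound there does not follow directly from the atomic argument but only through the interpolation lemma, which is exactly what forces the $\eps$-losses in $N_1,N_2$ together with the factor $T^{0+}$. Absorbing this loss is tightest in the near-diagonal regime $N_1\sim N_2$, where the bilinear refinement gives the least genuine improvement over the linear Strichartz estimate, whereas for $N_1\ll N_2$ the $N_2^{0+}$ slack alone already suffices. The remaining ingredients — the atom bookkeeping, Bernstein's inequality, and H\"older in time — are routine and can be carried out as in \cite{HHK, HTT, BOP2}.
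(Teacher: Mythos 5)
Your proposal is correct in substance and rests on the same three ingredients the paper uses: the free bilinear refinement of Strichartz, the transference principle, and the HHK interpolation lemma together with a crude H\"older bound carrying an explicit $T^{1/2}$. The packaging differs, however. The paper factors the argument into two independent steps: it first cites \cite{BOP2} for the $T$-uniform $Y^0$-estimate \eqref{biStr1}, where the $U^2_\Delta\to V^2_\Delta$ passage and its $N^{0\pm}$-losses have already been carried out over $\R$, and then obtains $T^{0+}$ by a trivial geometric-mean interpolation between \eqref{biStr1} and the time-localized H\"older bound \eqref{biStr3} (with Sobolev giving $N_1^{3/4}N_2^{3/4}$ rather than your Bernstein bound $N_1^{3/2}$; both work, yours being slightly sharper in the separated regime). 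You instead rederive the $U^2_\Delta$ bound from atoms and feed the $T$-dependent crude bound directly into Proposition~2.20 of \cite{HHK}. That is viable, but the sentence claiming the interpolation lemma ``forces the $\eps$-losses in $N_1,N_2$ together with the factor $T^{0+}$'' slightly misattributes the source of $T^{0+}$: Proposition~2.20 returns the $V^2_\Delta$ constant $N_1 N_2^{-1/2}\bigl(1+\log_+(TN_1N_2)\bigr)^2$, with a \emph{logarithmic}, not a $T$-power, loss. The factor $T^{0+}$ then has to be extracted by an additional observation: for $T\ge(N_1N_2)^{-1}$ one uses $T^{0+}\gtrsim(N_1N_2)^{-0+}$ (so the requisite $T$-power is free at the cost of $N^{0+}$) while absorbing the logarithm into $N_2^{0+}$; for $T<(N_1N_2)^{-1}$ the interpolation lemma is not applicable and one uses the crude $U^q_\Delta\hookleftarrow V^2_\Delta$ bound directly, writing $T^{1/2}N_1^{3/2}\le T^{0+}(N_1N_2)^{-1/2+}N_1^{3/2}\lesssim T^{0+}N_1^{1-}N_2^{-1/2+}$. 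If you make this regime split explicit (or, more simply, separate the $U^2_\Delta\to V^2_\Delta$ step from the $T^{0+}$ extraction as the paper does), the argument is complete.
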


\begin{proof}
From  the bilinear refinement of the Strichartz estimate \cite{Bo98, CKSTT}
and the transference principle, we have
\begin{align}
\| \P_{N_1}u_1 \P_{N_2}u_2\|_{L^2_{t,x}} 
\les N_1^{1-} N_2^{-\frac{1}{2}+} \| \P_{N_1} u_1 \|_{Y^0} \| \P_{N_2} u_2 \|_{Y^0} 
\label{biStr1}
\end{align}

\noi
for all $u_1, u_2 \in Y^0$.
See \cite{BOP2} for the proof of  \eqref{biStr1}.
On the other hand, it follows from H\"older's  and Sobolev's inequalities that
\begin{align}
\| \P_{N_1}u_1 \P_{N_2}u_2\|_{L^2_TL^2_x}
& \les T^\frac{1}{2} \| \P_{N_1} u_1 \|_{L^\infty_TL^4_x} \| \P_{N_2} u_2 \|_{L^\infty_TL^4_x} 
\notag \\
& \les T^\frac{1}{2} N_1^\frac{3}{4}N_2^\frac{3}{4} \| \P_{N_1} u_1 \|_{Y^0_T} \| \P_{N_2} u_2 \|_{Y^0_T} .
\label{biStr3}
\end{align}

\noi
Then, the  estimate \eqref{biStr2} follows
from interpolating \eqref{biStr1} and \eqref{biStr3}.
\end{proof}

\section{On the second order term $z_3$}\label{SEC:Z3}

In this and the next sections, 
we study the regularity properties
of the various stochastic terms that appear
in the iterative procedures.
Given  $\phi \in H^s(\R^3)$, let 
 $\phi^\o$ be 
 the Wiener randomization of $\phi$ defined in \eqref{rand}
and   set 
 \begin{align*}
z_1 = S(t) \phi^\o.
 \end{align*}

\noi
In this section,  we study the regularity properties of  the second order term:
\begin{align}
  z_3  = -i  \int_0^t S(t - t') |z_1|^2 z_1(t') dt',
\label{O-1}
\end{align}

\noi
We first present the proof of  Proposition \ref{PROP:Z3}.
We follow closely the argument in \cite{BOP2}.

\begin{proof}[Proof of  Proposition \ref{PROP:Z3}]

(i) 		
By Lemma \ref{LEM:lin}, 
the estimate \eqref{Z3_2} follows once we prove
\begin{align}
\bigg| \int_0^T \int_{\R^3}
\jb{\nb}^\s ( z_1 \cj{z_1} z_1 )\cj w  dx dt\bigg|
\leq T^\theta C(\o,  \|\phi\|_{H^s}) 
\label{O1}
\end{align}

\noi
for some almost surely finite constant $ C(\o, \|\phi\|_{H^s} ) > 0$ and $\theta > 0$,  
where $\|w\|_{Y^{0}_T} \leq 1$.
In the following, we drop the complex conjugate when it does not play any role.

Define $A^s_3(T)$ by 
\begin{align} 
\|z_1\|_{A^s_3(T)}: = \max\Big(
\|\jb{\nb}^s z_{1} \|_{L^{\frac{30}{7}+}_{T}L^\frac{30}{7}_x}, 
\| \jb{\nb}^sz_{1} \|_{L^{4}_{T, x}}, 
\| z_1(0)\|_{H^s} \Big). 
\label{O1a}
\end{align}

\noi
Then, 
by applying the dyadic decomposition, 
it suffices to prove
\begin{align}
\bigg| \int_0^T \int_{\R^3}
\P_{N_1} z_1 \cdot  \P_{N_2} z_1 \cdot N_3^\s \P_{N_3}  z_1 \cdot  \P_{N_4}  w  \, dx dt\bigg|
\leq  T^\theta N_{\max}^{0-} C(\|z_1\|_{A^s_3(T)})
\label{O2}
\end{align}

\noi
for all $N_1, \dots, N_4 \in 2^{\NB_0}$
with  $N_3 \geq N_2 \geq N_1$. 
Once we prove~\eqref{O2}, the desired estimate~\eqref{O1}
follows from summing~\eqref{O2} over dyadic blocks and applying Lemmas \ref{LEM:PStr} and  \ref{LEM:Hs}.
Recall our  shorthand notation:
$z_{1, N_j} = \P_{N_j} z_1$
and 
$w_{N_4} = \P_{N_4} w$. 

In the following, we assume
\begin{align}
 \s  < 2s
 \qquad \text{and}\qquad 
0 \leq  s < 1.
\label{O3}
\end{align}

\smallskip

\noi
{\bf $\bullet$  Case (1):} $N_2 \sim N_3$.
\newline
\indent
By H\"older's inequality and Lemma \ref{LEM:Str},
we have
\begin{align*}
\text{LHS of } \eqref{O2}
& \les \| z_{1, N_1} \|_{L^\frac{30}{7}_{T, x}}
\bigg(\prod_{j = 2}^3  \|\jb{\nb}^\frac{\s}{2} z_{1, N_j} \|_{L^{\frac{30}{7}}_{T, x}}\bigg)
 \| w_{N_4} \|_{L^\frac{10}{3}_{T, x}}\notag\\
& \leq T^{0+} N_{\max}^{0-} C(\|z_1\|_{A^s_3(T)}) \|w_{N_4}\|_{Y^0_T},  
\end{align*}

\noi
provided that \eqref{O3} holds.

\smallskip

\noi
{\bf $\bullet$ Case (2):} $N_3 \sim N_4 \gg N_1, N_2$.
\newline
\noi
$\circ$ \underline{Subcase (2.a):} 
$N_1, N_2 \ll N_3^\frac{1}{2}$.
\quad
\noi
By Cauchy-Schwarz' inequality and Lemma \ref{LEM:biStr} 
followed by Lemma \ref{LEM:lin},\footnote{In the remaining part of this paper, 
we repeatedly apply this argument when there is a frequency separation.
We shall simply refer to it as the ``bilinear Strichartz estimate'' argument.}
 we have
\begin{align*}
\text{LHS of } \eqref{O2}
&    \leq N_3^\s \|z_{1, N_1}   z_{1, N_3} \|_{L^2_{T, x}} \|z_{1, N_2}  w_{N_4}\|_{L^2_{T, x}}\\
& \les T^{0+} N_1^{1-s-} N_2^{1-s-} N_3^{\s - s - 1 +}
\bigg(\prod_{j = 1}^3
\|\P_{N_j} \phi^\o\|_{H^{s}}\bigg)
 \|w_{N_4}\|_{Y^0_T}\\
& \les T^{0+}  N_3^{\s - 2s  +}
\prod_{j = 1}^3
\|\P_{N_j} \phi^\o\|_{H^{s}}\\
& \leq 
T^{0+} N_{\max}^{0-} C(\|z_1\|_{A^s_3(T)}), 
\end{align*}

\noi
provided that \eqref{O3} holds.

\smallskip

\noi
$\circ$ \underline{Subcase (2.b):} $N_1, N_2\ges N_3^\frac{1}{2} $.
\quad 
By H\"older's inequality and Lemma \ref{LEM:Str}, 
we have
\begin{align*}
\text{LHS of } \eqref{O2}
& \le N_3^\s \bigg(\prod_{j = 1}^3 \| z_{1, N_j} \|_{L^\frac{30}{7}_{T, x}}\bigg)
 \| w_{N_4} \|_{L^\frac{10}{3}_{T, x}}\notag\\
& \le T^{0+}
N_1^{-s} N_2^{-s}N_3^{\s-s} 
 C(\|z_1\|_{A^s_3(T)})\notag\\
& \leq T^{0+} N_{\max}^{0-} C(\|z_1\|_{A^s_3(T)}), 
\end{align*}

\noi
provided that \eqref{O3} holds.

\smallskip

\noi
$\circ$ \underline{Subcase (2.c):} $N_2\ges N_3^\frac{1}{2} \gg N_1$.
\quad 
By the bilinear Strichartz estimate, 
 we have
\begin{align*}
\text{LHS of } \eqref{O2}
&  \le N_3^\s 
 \|z_{1, N_1}  w_{N_4} \|_{L^2_{T, x}}
\prod_{j = 2}^3 \|z_{1, N_j}\|_{L^4_{T, x}}
\\
& \les
T^{0+} N_1^{1-s-} N_2^{-s}
N_3^{\s-s-\frac 12+}
\|\P_{N_1}\phi^\o\|_{H^s}
\bigg(\prod_{j = 2}^3 \|\jb{\nb}^s z_{1, N_j} \|_{L^4_{T, x}}\bigg)
\|w_{N_4}\|_{Y^0_T}\\
& \le T^{0+}
N_3^{\s-2s+} 
 C(\|z_1\|_{A^s_3(T)})\\
 & \leq T^{0+} N_{\max}^{0-} C(\|z_1\|_{A^s_3(T)}), 
\end{align*}

\noi
provided that \eqref{O3} holds.

\smallskip

Therefore, putting all the cases together, we obtain \eqref{O2}.

\medskip

\noi
(ii) 
Given $N \gg 1$ and small $\l > 0$, consider the following deterministic initial condition
$\phi$ whose Fourier transform is given by 
\begin{align*}
 \ft \phi(\xi) = \ind_{ N e_1 + \l Q}(\xi) + \ind_{(N+100\l) e_1 + \l Q}(\xi) 
+ \ind_{ N e_1 +  100\l e_2 + \l Q}(\xi),
\end{align*}

\noi
where $Q = (-\frac 12, \frac 12]^3$, $e_1 = (1, 0, 0)$, and $e_2 = (0, 1, 0)$.
By taking $\l > 0$ sufficiently small, 
we have $\supp \ft \phi \subset N e_1 + Q$
and thus we can neglect the effect of the randomization in \eqref{rand}
since all the three terms on the right-hand side will be multiplied by 
a common random number $g_{N e_1}$.
Without loss of generality, we assume that 
$g_{N e_1} = 1$ in the following.

We estimate from below the contribution to 
$\ind_{Q_{N, \l}} (\xi) \ft z_3 (t, \xi) $,
where  \[Q_{N, \l}  = Ne_1 + 100 \l (e_1 + e_2) + \l Q.\] 
From \eqref{O-1}, we have
\begin{align}
\ind_{Q_{N, \l}} & (\xi) \ft z_3 (t, \xi) \notag \\
& = -i \ind_{Q_{N, \l}} (\xi) e^{-it |\xi|^2} \int_0^t \intt_{\xi = \xi_1 - \xi_2 + \xi_3} 
e^{i t' \Phi(\bar \xi)}  \ft{\phi^\o} (\xi_1) \cj{\ft{\phi^\o} (\xi_2)} \ft{\phi^\o} (\xi_3) d\xi_1 d\xi_2 dt',
\label{O6}
\end{align}

\noi
where the phase function $\Phi(\bar \xi)$ is given by 
\begin{align}
\Phi(\bar \xi) = \Phi(\xi, \xi_1, \xi_2, \xi_3)
= |\xi|^2 - |\xi_1|^2 + |\xi_2|^2 - |\xi_3|^2
= 2\jb{\xi - \xi_1, \xi - \xi_3}_{\R^3}.
\label{O7}
\end{align}

\noi
Then, it follows that the only non-trivial contribution to \eqref{O6}
appears if 
\begin{align}
\xi_1  \in (N+100\l) e_1 + \l Q, 
\qquad \xi_2  \in N e_1 + \l Q,
\qquad \xi_3  \in N e_1 +  100\l e_2 + \l Q
\label{O7a}
\end{align}

\noi
(up to the permutation $\xi_1 \leftrightarrow \xi_3$).
In this case, we have 
$|\Phi(\bar \xi)| \ll1 $
and thus  
\begin{align}
\Re e^{i t' \Phi(\bar \xi)} \geq \frac 12
\label{O8}
\end{align}

\noi
for all $t' \in [0, 1]$.

Now, recall the following lemma on the convolution.

\begin{lemma}\label{LEM:conv}
There exists $c>0$ such that 
\begin{align*}
\ind_{a + \l Q}* \ind_{b + \l Q} (\xi)\geq c \l^3 \ind_{a+b+\l Q}(\xi)
\end{align*}
		
\noi
for all $a, b, \xi \in \R^3$.
\end{lemma}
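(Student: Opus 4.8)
The plan is to reduce the claim to the model case $a=b=0$, $\l=1$ by exploiting the translation- and dilation-invariance of Lebesgue measure, and then to evaluate the resulting convolution by a direct coordinate-by-coordinate computation, since $Q$ is a product cube.

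First I would rewrite the convolution as the Lebesgue measure of an intersection of two cubes. Unwinding the definition,
\[
\ind_{a+\l Q}*\ind_{b+\l Q}(\xi)
=\int_{\R^3}\ind_{a+\l Q}(\eta)\,\ind_{b+\l Q}(\xi-\eta)\,d\eta
=\big|(a+\l Q)\cap(\xi-b-\l Q)\big|.
\]
Performing the change of variables $\eta=a+\l\zeta$ then gives
\[
\ind_{a+\l Q}*\ind_{b+\l Q}(\xi)
=\l^{3}\,\big|Q\cap\big(y-Q\big)\big|,\qquad y:=\l^{-1}(\xi-a-b).
\]
Since $y\in Q$ precisely when $\xi\in a+b+\l Q$, it suffices to produce a constant $c>0$ with $|Q\cap(y-Q)|\ge c$ for every $y\in Q$.

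For the last step I would use that $Q=\prod_{i=1}^{3}(-\tfrac12,\tfrac12]$ is a product set, so for $y=(y_1,y_2,y_3)\in Q$ the set $Q\cap(y-Q)$ factors as the product over $i=1,2,3$ of the one-dimensional sets $(-\tfrac12,\tfrac12]\cap\big(y_i-(-\tfrac12,\tfrac12]\big)$. Up to a single endpoint, each such set is $\{t:|t|<\tfrac12,\ |y_i-t|<\tfrac12\}$, which is an interval of length $1-|y_i|\ge\tfrac12$ because $|y_i|\le\tfrac12$. Multiplying the three coordinate contributions yields $|Q\cap(y-Q)|=\prod_{i=1}^{3}(1-|y_i|)\ge\tfrac18$, so the lemma holds with $c=\tfrac18$.

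I do not expect a genuine obstacle here: the statement is an elementary computation. The only points that call for a little care are bookkeeping the translation and dilation so that the rescaled variable $y$ lands in $Q$ exactly on the set $\{\xi\in a+b+\l Q\}$, together with the harmless observation that the half-open nature of $Q$ affects only sets of Lebesgue measure zero and hence changes none of the integrals above.
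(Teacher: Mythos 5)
Your proof is correct, and the argument is the standard one. Note that the paper does not actually supply a proof of this lemma: it is introduced in the middle of the proof of Proposition \ref{PROP:Z3}\,(ii) with the words ``recall the following lemma on the convolution,'' so it is treated as an elementary fact whose verification is left to the reader. Your computation fills in exactly what one would expect: rewrite the convolution as the measure of the overlap of two cubes, translate and rescale to normalize $a=b=0$, $\l=1$, and then use the product structure of $Q=(-\tfrac12,\tfrac12]^3$ to reduce to three independent one-dimensional overlap estimates, each of which is at least $\tfrac12$ when the rescaled point $y$ lies in $Q$. The bookkeeping is right, the scaling factor $\l^3$ comes out correctly from the change of variables, and the observation that the half-open endpoints only affect null sets disposes of the one spot where one might worry. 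Taking $c=\tfrac18$ works; in fact your computation gives the sharp value $|Q\cap(y-Q)|=\prod_i(1-|y_i|)$, which is bounded below by $\tfrac18$ precisely because $\max_i |y_i|\le\tfrac12$. There is no gap.
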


By applying Lemma \ref{LEM:conv} to \eqref{O6}
with \eqref{O7a} and \eqref{O8}, we obtain
\begin{align*}
|\ind_{Q_{N, \l}} (\xi) \ft z_3 (t, \xi) |
\ges t \l^6  \ind_{Q_{N, \l}} (\xi).
\end{align*}

\noi
Therefore, for any $\s > 0$, we have
\begin{align*}
\| z_3\|_{X^\s([0, 1])} \ges \| z_3 \|_{L^\infty_t([0, 1];  H^\s)}
\ges  \l^\frac{15}{2} N^\s \too \infty
\end{align*}

\noi
as $N \to \infty$, 
while $\| \phi\|_{L^2} \sim \l^\frac{3}{2}$ remains bounded.	
This in particular implies that
 when $s = 0$, 
 the estimate \eqref{Z3_2}
can not hold for any $\s > 0$.
This proves Part (ii).
\end{proof}

\begin{remark}\label{REM:Z3_2}
\rm

It follows from the proof of Proposition \ref{PROP:Z3} (i)
that 
\begin{align*}
\big\|\I( u_1, u_2, u_3)\big\|_{X^\s([0, 1])}
\les \prod_{j = 1}^3 \| u_j  \|_{A^s_3(T)},
\end{align*}

\noi
where $\I( u_1, u_2, u_3)$ is as in \eqref{Duhamel1}.
In particular, 
the left-hand side of \eqref{Z3_3} is finite
for $u_1, u_2, u_3 \in \mathcal{R}^s$.
The only probabilistic component
in the proof of Proposition \ref{PROP:Z3} (i)
appears in applying Lemmas~\ref{LEM:PStr}
and \ref{LEM:Hs}
to control the $A^s_3(T)$-norm of $z_1$
in terms of the $H^s$-norm of $\phi$.
In this sense, we exploit the randomization only at the linear level.

\end{remark}

\medskip

On the one hand, 
Proposition \ref{PROP:Z3} 
shows that $z_3$ controls  almost $2s$ derivatives.
On the other hand, we need to measure
$z_3$ in the $X^{2s-}$-norm,
which controls only the admissible space-time Lebesgue norms
(with $2s-$ derivatives) via Lemma \ref{LEM:Str}.
The following lemma breaks this rigidity by giving up a control on derivatives.
In particular, it allows us to control a wider range
of space-time Lebesgue norms of $z_3$.
The main idea is to use the dispersive estimate for the linear Schr\"odinger operator:
\begin{align}
\| S(t) f \|_{L^r_x} \les |t|^{-\frac{3}{2}(1 - \frac 2 r)} \| f\|_{L^{r'}_x}.
\label{O9}
\end{align}
	 
\noi
This allows us to reduce the analysis to a product of the random linear solution $z_1 = S(t) \phi^\o$
and apply Lemma \ref{LEM:PStr}.

\begin{lemma}\label{LEM:Z3_2}
Let $s \geq 0$.
Given  $\phi \in H^s(\R^3)$,  let $\phi^\o$ be its Wiener randomization
defined in~\eqref{rand}.
Then, for any finite $q, r \geq 1$, we have
\begin{align}
\| \P_N z_3\|_{L^q_T L^r_x}
\les \begin{cases}
\rule[-6mm]{0pt}{15pt} T^{\frac 3 r - \frac 12}\| z_1\|_{L^{3q}_T L^{3r'}_x}^3 & \text{when } 1 \leq r < 6, \\
 T^{0+} N^{\frac{1}{2} - \frac 3 r+}  \| z_1\|_{L^{3q}_T L^{\frac{18}{5}+}_x}^3 
&  \text{when }  r \geq  6, \\
\end{cases}
\label{O10}
\end{align}

\noi
for any $T > 0$ and $N \in 2^{\NB_0}$.	
Note that 
 the right-hand side of \eqref{O10} is almost surely finite thanks 
to the probabilistic Strichartz estimate (Lemma \ref{LEM:PStr}).
\end{lemma}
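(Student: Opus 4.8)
The plan is to express $\P_N z_3$ via the Duhamel formula, move the Littlewood--Paley projection onto the cubic product (which is harmless on $L^r_x$), and then estimate the $L^q_T L^r_x$-norm by Minkowski's inequality in time, writing
\[
\|\P_N z_3(t)\|_{L^r_x} \les \int_0^t \big\| S(t-t') \P_N\big(|z_1|^2 z_1\big)(t') \big\|_{L^r_x}\, dt'.
\]
The key device is the dispersive estimate \eqref{O9}: applying it with exponent $r$ in the range $1\le r<6$ gives $\| S(t-t') \P_N F(t')\|_{L^r_x} \les |t-t'|^{-\frac 32(1-\frac 2r)}\|F(t')\|_{L^{r'}_x}$, where the exponent $\frac32(1-\frac2r)\in[0,1)$ is integrable in $t'$ over $[0,T]$. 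Since $|z_1|^2 z_1$ is a pointwise product of three copies of $z_1$, H\"older's inequality in $x$ gives $\||z_1|^2 z_1(t')\|_{L^{r'}_x}\le \|z_1(t')\|_{L^{3r'}_x}^3$. Carrying out the $t'$-integral (a fractional-integration/Young-type bound in time, again via H\"older since $3q$ is the natural exponent so that $\|\,\|z_1(t')\|_{L^{3r'}_x}^3\,\|_{L^q_{t'}} = \|z_1\|_{L^{3q}_{T}L^{3r'}_x}^3$) produces the factor $T^{\frac3r-\frac12}$ coming from $\int_0^T |t-t'|^{-\frac32(1-\frac2r)}dt' \les T^{1-\frac32(1-\frac2r)} = T^{\frac3r-\frac12}$. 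This yields the first line of \eqref{O10}.

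For the regime $r\ge 6$, the exponent $\frac32(1-\frac2r)\ge 1$ makes the kernel $|t-t'|^{-\frac32(1-\frac2r)}$ non-integrable near the diagonal, so I would not apply \eqref{O9} directly at exponent $r$. Instead I would use the frequency localization $\P_N$ to trade integrability for derivatives: first apply Bernstein's inequality $\|\P_N g\|_{L^r_x}\les N^{3(\frac1{r_0}-\frac1r)}\|\P_N g\|_{L^{r_0}_x}$ to drop from $L^r_x$ down to $L^{r_0}_x$ for some $r_0$ slightly below $6$ (so that $\frac3{r}$ is replaced by $\frac{3}{r_0}=\frac12+$ and the time-kernel $|t-t'|^{-\frac32(1-\frac2{r_0})}$ becomes integrable, losing only a $T^{0+}$), and then run the same dispersive-estimate-plus-H\"older argument as before at exponent $r_0$. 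The Bernstein loss contributes $N^{3(\frac1{r_0}-\frac1r)} = N^{\frac12-\frac3r+}$, and the inner $L^{3r_0'}_x$-norm of $z_1$ becomes $L^{\frac{18}{5}+}_x$ since $3r_0' = \frac{18}{5}+$ when $r_0 = 6-$; this gives the second line of \eqref{O10}. Finally, the almost-sure finiteness of the right-hand sides follows immediately from Lemma \ref{LEM:PStr}, since $3q<\infty$ and $3r'$, $\frac{18}{5}+$ are finite (so we are in case (i), $s=0$, of that lemma).

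The main obstacle is purely bookkeeping with the exponents in the second regime: one must choose $r_0 = 6-$ carefully so that simultaneously (a) the dispersive time-kernel is integrable with only an $\eps$-loss, (b) the Bernstein exponent matches $\frac12-\frac3r$ up to $+$, and (c) the surviving spatial exponent on each $z_1$ factor is exactly $\frac{18}{5}+$; all three are consistent, and once the endpoint choice is pinned down the estimate is routine. There is no probabilistic input in the argument itself beyond the concluding appeal to Lemma \ref{LEM:PStr} to see the bounds are a.s.~finite.
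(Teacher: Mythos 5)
Your proposal follows essentially the same route as the paper's own proof: Minkowski in time, then the dispersive estimate combined with H\"older in $x$ for the $1\le r<6$ regime (with the $T$-power extracted from the integrable kernel via Young's convolution inequality), and a Bernstein/Sobolev reduction from $L^r_x$ to $L^{6-}_x$ followed by the same dispersive-plus-H\"older argument for $r\ge 6$. The paper phrases the Bernstein step as ``Sobolev's inequality'' for the frequency-localized output, but for $\P_N$-localized functions this is the same estimate; all exponents and loss factors agree with the paper's.
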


\begin{proof}

We first consider the case $r < 6$.
From \eqref{O-1} and \eqref{O9}, we have
\begin{align}
\| \P_N z_3\|_{L^q_T L^r_x}
& \leq \bigg\|\int_0^t \|\P_N S(t - t') |z_1|^2 z_1(t')\|_{L^r_x} dt'\bigg\|_{L^q_T}\notag \\
& \les \bigg\|\int_0^t \frac{1}{|t - t'|^{\frac{3}{2} - \frac 3 r}} \| z_1(t')\|_{L^{3r'}_x}^3 dt'\bigg\|_{L^q_T}\notag \\
& \les T^{\frac 3 r - \frac 12}\| z_1\|_{L^{3q}_T L^{3r'}_x}^3. 
\label{O11}
\end{align}

\noi
When $r \geq 6$, we proceed as in \eqref{O11}
but we apply Sobolev's inequality before applying~\eqref{O9}:
\begin{align*}
\| \P_N z_3\|_{L^q_T L^r_x}
& \leq \bigg\|\int_0^t \|\P_N S(t - t') |z_1|^2 z_1(t')\|_{L^r_x} dt'\bigg\|_{L^q_T}\notag \\
& \les N^{\frac{1}{2} - \frac 3 r+} \bigg\|\int_0^t \|\P_N S(t - t') |z_1|^2 z_1(t')\|_{L^{6-}_x} dt'\bigg\|_{L^q_T}\notag \\
& \les N^{\frac{1}{2} - \frac 3 r+} 
\bigg\|\int_0^t \frac{1}{|t - t'|^{1-}} \| z_1(t')\|_{L^{\frac{18}{5}+}_x}^3 dt'\bigg\|_{L^q_T}\notag \\
& \les T^{0+} N^{\frac{1}{2} - \frac 3 r+}  \| z_1\|_{L^{3q}_T L^{\frac{18}{5}+}_x}^3 .
\end{align*}

\noi
This completes the proof of Lemma \ref{LEM:Z3_2}.
\end{proof}

\section{On the higher order terms}\label{SEC:high}

In this section, we study the regularity properties of the higher order terms.

\subsection{On the third order term $z_5$}\label{SUBSEC:Z5}

In this subsection,  
we study the third order term:
\begin{align}
  z_5  = -i  \sum_{\substack{j_1 + j_2 + j_3 = 5\\j_1, j_2, j_3 \in \{1, 3\}  }}\int_0^t S(t - t') z_{j_1} \cj{z_{j_2}}z_{j_3} (t') dt'.
\label{P1}
\end{align}

\noi
The following lemma shows that 
the third order term $z_5$ enjoys a gain of extra $\frac 12s$ derivative
 as compared to the second order term
$z_3$ (Proposition~\ref{PROP:Z3}).

\begin{lemma}\label{LEM:Z5}
Given $0 < s < \frac 12 $, 
 let $\phi^\o$ be the Wiener randomization 
of $\phi \in H^s(\R^3)$
 defined in \eqref{rand}.
Then, for any $\s < \frac 52 s$, we have 
\begin{align*}
  z_5  \in X^\s_\textup{loc} , 
\end{align*}

\noi
almost surely.
In particular,  there exists an almost surely finite constant $ C(\o, \|\phi\|_{H^s} ) > 0$ and $\theta > 0$
such that 
\begin{align*}
\| z_5 \|_{X^{\s}([0, T])} \leq T^\theta C(\o,  \|\phi\|_{H^s}) 
\end{align*}

\noi
for any $T > 0$.
\end{lemma}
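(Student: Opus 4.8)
The plan is to reduce the estimate for $z_5$ to the case-by-case analysis already developed for $z_3$ (Proposition \ref{PROP:Z3}), leveraging the extra smoothing we have on $z_3 \in X^{2s-}_{\textup{loc}}$. As in the proof of Proposition \ref{PROP:Z3}, by Lemma \ref{LEM:lin} it suffices to bound
\[
\bigg| \int_0^T \int_{\R^3} \jb{\nb}^\s\big( z_{j_1} \cj{z_{j_2}} z_{j_3}\big)\, \cj w \, dx\, dt\bigg| \leq T^\theta C(\o, \|\phi\|_{H^s})
\]
for $\|w\|_{Y^0_T} \leq 1$, where $(j_1, j_2, j_3) = (1,1,3)$ up to permutations. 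So there are really two sub-cases according to whether the unique $z_3$-factor is complex-conjugated or not; both are handled the same way since the conjugation never matters in these multilinear estimates. After a dyadic decomposition one is left with proving a frequency-localized bound of the form
\[
\bigg| \int_0^T \int_{\R^3} \P_{N_1} z_1 \cdot \P_{N_2} z_1 \cdot N_3^\s \P_{N_3} z_3 \cdot \P_{N_4} w\, dx\, dt\bigg| \leq T^\theta N_{\max}^{0-} C,
\]
with $N_{\max}^{0-}$ decay so one can sum, and where $C$ is a product of the $A^s_3(T)$-type norm of $z_1$ (controlled almost surely by Lemma \ref{LEM:PStr}, Lemma \ref{LEM:Hs}) and suitable space-time norms of $z_3$.

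The key inputs are (i) the regularity gain $z_3 \in X^{2s-}_{\textup{loc}}$, which lets us place $z_3$ in $L^q_T W^{2s-,r}_x$ for admissible $(q,r)$ via Lemma \ref{LEM:Str}, and (ii) the gain of integrability for $z_3$ provided by Lemma \ref{LEM:Z3_2}, which trades derivatives for a wider range of Lebesgue exponents and is crucial when $z_3$ sits at high frequency. The heuristic budget: $z_1$ carries $s$ derivatives, $z_3$ carries $2s$, so the worst term $z_1 z_1 z_3$ should carry $s + s/2 + \cdots$ — more precisely, distributing the derivative $\jb{\nb}^\s$ onto the high-frequency factor and using Hölder with the admissible pairs $(\tfrac{30}{7}+, \tfrac{30}{7})$, $(4,4)$, $(\tfrac{10}{3},\tfrac{10}{3})$ together with the bilinear Strichartz estimate (Lemma \ref{LEM:biStr}) whenever there is frequency separation, one finds the constraint $\s < \tfrac52 s$. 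Concretely the cases are: $z_3$ at high frequency comparable to $w$ (use bilinear Strichartz on the two $z_1$'s paired against $z_3$ and $w$, or Lemma \ref{LEM:Z3_2} to put $z_3$ in a high-integrability space); $z_1$ at high frequency; and the balanced case where two of the top three frequencies are comparable (straight Hölder with admissible pairs). In each branch one checks the powers of the largest frequency sum to something strictly negative precisely under $\s < \tfrac52 s$ and $0 < s < \tfrac12$; the $T^\theta$ gain comes from the $T^{0+}$ factors in Lemmas \ref{LEM:Str}, \ref{LEM:biStr}, \ref{LEM:Z3_2}, plus the $T^\theta$ already built into $\|z_3\|_{X^{2s-}_T}$ from Proposition \ref{PROP:Z3}.

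The main obstacle I expect is the case where $z_3$ carries the largest frequency $N_3 \gtrsim N_1, N_2, N_4$ and simultaneously $N_3 \sim N_4$: here putting $\jb{\nb}^\s$ on $z_3$ costs $N_3^\s$, and we only have $2s-$ derivatives on $z_3$ to spend, so naive Hölder gives $N_3^{\s - 2s}$ which is not summable once $\s$ is close to $\tfrac52 s > 2s$. The resolution is to use Lemma \ref{LEM:Z3_2} in the high-integrability regime ($r \geq 6$) to absorb the high frequency $N_3$ of $z_3$ into a favorable power $N_3^{\frac12 - \frac3r +}$, which is negative for $r$ slightly above $6$, and to recover the remaining derivatives from the two $z_1$ factors via a bilinear Strichartz estimate pairing a low-frequency $z_1$ against $z_3$ or $w$; the arithmetic of combining these two gains is exactly what forces the threshold $\tfrac52 s$. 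A careful bookkeeping of which factor absorbs which derivative — and ensuring the probabilistic norms of $z_1$ that appear are all of the finite-$q$, finite-$r$ type covered by Lemma \ref{LEM:PStr} — is the technical heart; everything else is the routine Hölder-and-Strichartz casework modeled on the proof of Proposition \ref{PROP:Z3}.
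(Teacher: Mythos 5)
Your overall framework — reduce to a dual pairing against $w \in Y^0_T$, perform a dyadic decomposition, and run a case-by-case analysis combining H\"older with admissible pairs, the bilinear Strichartz estimate, and the $X^{2s-}$-regularity of $z_3$ — matches the paper's. However, you have misdiagnosed the binding case, and this is more than a cosmetic slip. You identify the ``main obstacle'' as the regime where $z_3$ carries the largest frequency with $N_3 \sim N_4$, and propose to resolve it with Lemma~\ref{LEM:Z3_2} in the $r \geq 6$ regime. In fact that case only produces the constraint $\s < 3s$: even when $N_3 \sim N_4 \sim N_{\max}$, the two $z_1$ factors at intermediate frequencies $N_1, N_2 \gtrsim N_3^{1/2}$ each contribute $N_j^{-s}$ from the $A^s_5$-norm, and combined with $N_3^{\s - 2s+}$ from $\|z_3\|_{X^{2s-}}$ this gives $N_3^{\s - 3s+}$ — no Lemma~\ref{LEM:Z3_2} required. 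The genuinely binding configuration is the one where a $z_1$ factor sits at the highest frequency paired with $w$ (i.e.\ $N_1 \sim N_4 \sim N_{\max} \gg N_2, N_3$), with both low-frequency factors $\ll N_1^{1/2}$. There the double bilinear estimate on $z_{1,N_1} z_{1,N_2}$ and $z_{3,N_3} w_{N_4}$ yields $N_1^{\s - s - \frac12+} N_2^{1-s-} N_3^{1-2s-} N_4^{-\frac12+}$, and optimizing at $N_2 \sim N_3 \sim N_1^{1/2}$ gives precisely $N_1^{\s - \frac52 s+}$, which is where the threshold $\s < \tfrac52 s$ comes from. Your bookkeeping as described would not isolate this case, because you have the wrong one flagged as the bottleneck.

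Two smaller issues: the paper's proof of this lemma does not invoke Lemma~\ref{LEM:Z3_2} at all (that is deferred to Lemma~\ref{LEM:Z7} and Proposition~\ref{PROP:NL1}); and the H\"older exponents you quote — $(\tfrac{30}{7}+, \tfrac{30}{7})$, $(4,4)$ — are the $A^s_3$-type norms from Proposition~\ref{PROP:Z3}, whereas the proof of Lemma~\ref{LEM:Z5} works with $A^s_5$, built on $L^{5+}_T L^5_x$ for $z_1$ and $L^{10/3}_{T,x}$ for $w$. The structural intuition you have is sound, but the quantitative analysis of which interaction determines the threshold is off, so the argument as proposed would not close at $\s = \tfrac52 s -$.
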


\begin{proof}
First, note that the only possible combination 
for $(j_1, j_2, j_3)$ in \eqref{P1} is $(1, 1, 3)$ up to permutations.
The complex conjugate does not play any role in the subsequent analysis
and hence we drop the complex conjugate sign 
and simply study
\begin{align}
  z_5  \sim   \int_0^t S(t - t') z_{1} z_1 z_{3} (t') dt'.
\label{P3}
\end{align}

\noi
By Lemma \ref{LEM:lin}, 
it suffices to prove 
\begin{align}
\bigg| \int_0^T \int_{\R^3}
\jb{\nb}^\s ( z_1 z_1 z_3 )w  dx dt\bigg|
\leq T^\theta C(\o,  \|\phi\|_{H^s}) 
\label{P4}
\end{align}

\noi
for some almost surely finite constant $ C(\o, \|\phi\|_{H^s} ) > 0$ and $\theta > 0$,  
where $\|w\|_{Y^{0}_T} \leq 1$.
Define $A^s_5(T)$ by 
\[ \|z_1\|_{A^s_5(T)}: = \max\Big(
\| \jb{\nb}^s z_{1} \|_{L^{5+}_T L^{5}_x}, 
\| z_1(0)\|_{H^s}, 
\Big). \]

\noi
Then, 
by applying the dyadic decomposition, 
it suffices to prove
\begin{align}
\bigg| \int_0^T \int_{\R^3}
N_{\max}^\s \P_{N_1} z_1 \cdot  \P_{N_2} z_1 
& \cdot \P_{N_3}  z_3 \cdot  
\P_{N_4}  w  \, dx dt\bigg| \notag\\
& \leq  T^\theta N_{\max}^{0-} C(\|z_1\|_{A^s_5(T)}, \|z_3\|_{X^{2s-}_T})
\label{P5}
\end{align}

\noi
for all $N_1, \dots, N_4 \in 2^{\NB_0}$.
Once we prove \eqref{P5}, the desired estimate \eqref{P4}
follows from summing over dyadic blocks and applying Lemmas \ref{LEM:PStr} and  \ref{LEM:Hs}
and Proposition \ref{PROP:Z3}.
In the following, we fix 
\begin{align*}
0 < s < \tfrac 12.
\end{align*}

\noi
Without loss of generality, we assume that $N_1 \geq N_2$.

\smallskip

\noi
{\bf $\bullet$  Case (1):} $N_1 \sim N_2 \sim N_{\max}$.
\newline
\noi
$\circ$ \underline{Subcase (1.a):} $N_3 \ll  N_1^\frac{1}{2}$.
\quad 
By the bilinear Strichartz estimate and Lemma \ref{LEM:Str}, 
 we have
\begin{align*}
\text{LHS of } \eqref{P5}
&  \les N_1^\s  \|z_{1, N_1}  z_{3, N_3} \|_{L^2_{T, x}}
  \| z_{1, N_2} \|_{L^{5}_{T, x}}
 \| w_{N_4} \|_{L^\frac{10}{3}_{T, x}}\notag\\
& \les T^{0+}
N_1^{\s - s - \frac 12+} N_2^{-s}
N_3^{1 - 2s-}
\|\P_{N_1}\phi^\o\|_{H^s}
 \|\jb{\nb}^s z_{1, N_2} \|_{L^5_{T, x}}
\|z_{3, N_3}\|_{Y^{2s-}_T}\\
& \le T^{0+}
N_1^{\s-3s+} 
 C(\|z_1\|_{A^s_5(T)}, \|z_{3}\|_{X^{2s-}_T}). 
\end{align*}

\noi
Hence, we obtain  
 \eqref{P5}, 
provided that $ \s  < 3s$.

%
%

\smallskip

\noi
$\circ$ \underline{Subcase (1.b):} $N_3\ges N_1^\frac{1}{2} $.
\quad 
By H\"older's inequality and Lemma \ref{LEM:Str}, 
we have
\begin{align*}
\text{LHS of } \eqref{P5}
& \les N_1^\s \| z_{1, N_1} \|_{L^{5}_{T, x}}
  \| z_{1, N_2} \|_{L^{5}_{T, x}}
  \| z_{3, N_3} \|_{L^{\frac{10}{3}}_{T, x}}
 \| w_{N_4} \|_{L^\frac{10}{3}_{T, x}}\notag\\
& \le T^{0+}
N_1^{\s -s} N_2^{-s}N_3^{-2s+} 
 C(\|z_1\|_{A^s_5(T)}, \|z_3\|_{X^{2s-}_T})\notag\\
& \le T^{0+}
N_1^{\s-3s+} 
 C(\|z_1\|_{A^s_5(T)}, \|z_{3}\|_{X^{2s-}_T}).
\end{align*}

\noi
Hence, we obtain  
 \eqref{P5}, 
provided that $ \s  < 3s$.

\smallskip

\noi
{\bf $\bullet$  Case (2):} $N_1 \sim N_3 \sim N_{\max} \gg N_2$.
\newline
\noi
$\circ$ \underline{Subcase (2.a):} $N_2 \ll N_1^\frac{1}{2}$.
\quad 
By the bilinear Strichartz estimate
and Lemma \ref{LEM:Str},  
 we have
\begin{align*}
\text{LHS of } \eqref{P5}
&  \les N_1^\s    \| z_{1, N_1} \|_{L^{5}_{T, x}} \|z_{1, N_2}  z_{3, N_3} \|_{L^2_{T, x}}
 \| w_{N_4} \|_{L^\frac{10}{3}_{T, x}}\notag\\
& \les
T^{0+}
N_1^{\s - s } N_2^{1-s-}
N_3^{- 2s-\frac 12 +}
 \|\jb{\nb}^s z_{1, N_1} \|_{L^5_{T, x}}
\|\P_{N_2}\phi^\o\|_{H^s}
\|z_{3, N_3}\|_{Y^{2s-}_T}\\
& \le T^{0+}
N_1^{\s-\frac 72s+} 
 C(\|z_1\|_{A^s_5(T)}, \|z_{3}\|_{X^{2s-}_T}).
\end{align*}

\noi
Hence, we obtain \eqref{P5}, 
provided that $ \s  < \frac 72s$.

%
%

\smallskip

\noi
$\circ$ \underline{Subcase (2.b):} $N_2\ges N_1^\frac{1}{2} $.
\quad 
By H\"older's inequality and Lemma \ref{LEM:Str}, 
we have
\begin{align*}
\text{LHS of } \eqref{P5}
& \les N_1^\s \| z_{1, N_1} \|_{L^{5}_{T, x}}
  \| z_{1, N_2} \|_{L^{5}_{T, x}}
  \| z_{3, N_3} \|_{L^{\frac{10}{3}}_{T, x}}
 \| w_{N_4} \|_{L^\frac{10}{3}_{T, x}}\notag\\
& \le T^{0+}
N_1^{\s -s} N_2^{-s}N_3^{-2s+} 
 C(\|z_1\|_{A^s_5(T)}, \|z_3\|_{X^{2s-}_T})\notag\\
& \le T^{0+}
N_1^{\s-\frac 72 s+} 
 C(\|z_1\|_{A^s_5(T)}, \|z_{3}\|_{X^{2s-}_T}).
\end{align*}

\noi
Hence, we obtain \eqref{P5}, 
provided that $ \s  < \frac 72s$.

\smallskip

\noi
{\bf $\bullet$  Case (3):} $N_1 \sim N_4 \sim N_{\max} \gg N_2, N_3$.
\newline
\noi
$\circ$ \underline{Subcase (3.a):} 
$N_2, N_3  \ll N_1^\frac{1}{2} $.
\quad 
By the bilinear Strichartz estimate, 
 we have
\begin{align*}
\text{LHS of } \eqref{P5}
&    \les N_1^\s\|z_{1, N_1}   z_{1, N_2} \|_{L^2_{T, x}} \|z_{3, N_3}  w_{N_4}\|_{L^2_{T, x}}\\
& \les T^{0+} N_1^{\s-s-\frac 12 + }N_2^{1-s - }  N_3^{1-  2s -}N_4^{-\frac{1}{2}+}
\bigg(\prod_{j = 1}^2
\|\P_{N_j} \phi^\o\|_{H^{s}}\bigg)
 \|z_{3, N_3}\|_{Y^{2s-}_T}
 \|w_{N_4}\|_{Y^0_T}\\
& \le T^{0+}  N_1^{\s - \frac 52 s  +}
C(\|z_1\|_{A^s_5(T)}, \|z_3\|_{X^{2s-}_T}).
\end{align*}

\noi
Hence, we obtain  \eqref{P5}, 
provided that 
$ \s  < \frac 52s$.

\noi
$\circ$ \underline{Subcase (3.b):} $N_2, N_3\ges N_1^\frac{1}{2} $.
\quad 
By H\"older's inequality and Lemma \ref{LEM:Str}, 
we have
\begin{align*}
\text{LHS of } \eqref{P5}
& \les N_1^\s \| z_{1, N_1} \|_{L^{5}_{T, x}}
  \| z_{1, N_2} \|_{L^{5}_{T, x}}
  \| z_{3, N_3} \|_{L^{\frac{10}{3}}_{T, x}}
 \| w_{N_4} \|_{L^\frac{10}{3}_{T, x}}\notag\\
& \le T^{0+}
N_1^{\s -s} N_2^{-s}N_3^{-2s+} 
 C(\|z_1\|_{A^s_5(T)}, \|z_3\|_{X^{2s-}_T})\notag\\
& \le T^{0+}
N_1^{\s-\frac 52 s+} 
 C(\|z_1\|_{A^s_5(T)}, \|z_{3}\|_{X^{2s-}_T}).
\end{align*}

\noi
Hence, we obtain  \eqref{P5}, 
provided that 
$ \s  < \frac 52s$.

\smallskip

\noi
$\circ$ \underline{Subcase (3.c):} $N_2\ges N_1^\frac{1}{2} \gg N_3$.
\quad 
By the bilinear Strichartz estimate
and Lemma \ref{LEM:Str},  
 we have
\begin{align*}
\text{LHS of } \eqref{P5}
&  \les N_1^\s  \|z_{1, N_1}  z_{3, N_3} \|_{L^2_{T, x}}  \| z_{1, N_2} \|_{L^{5}_{T, x}} 
 \| w_{N_4} \|_{L^\frac{10}{3}_{T, x}}\notag\\
& \le T^{0+}
N_1^{\s - s- \frac 12 +  } N_2^{-s}
N_3^{1- 2s-}
 C(\|z_1\|_{A^s_5(T)}, \|z_{3}\|_{X^{2s-}_T})\\
& \le T^{0+}
N_1^{\s-\frac 52s+} 
 C(\|z_1\|_{A^s_5(T)}, \|z_{3}\|_{X^{2s-}_T}).
\end{align*}

\noi
Hence, we obtain \eqref{P5}, 
provided that 
$ \s  < \frac 52s$.

\smallskip

\noi
$\circ$ \underline{Subcase (3.d):} $N_3\ges N_1^\frac{1}{2} \gg N_2$.
\quad 
By the bilinear Strichartz estimate
and Lemma \ref{LEM:Str},  
 we have
\begin{align*}
\text{LHS of } \eqref{P5}
&  \les N_1^\s    \| z_{1, N_1} \|_{L^{5}_{T, x}} 
 \| z_{3, N_3} \|_{L^\frac{10}{3}_{T, x}}
\|z_{1, N_2}  w_{N_4} \|_{L^2_{T, x}}
\notag\\
& \le T^{0+}
N_1^{\s - s } N_2^{1-s-}
N_3^{- 2s+} N_4^{-\frac 12 +}
 C(\|z_1\|_{A^s_5(T)}, \|z_{3}\|_{X^{2s-}_T})\\
& \le T^{0+}
N_1^{\s-\frac 52s+} 
 C(\|z_1\|_{A^s_5(T)}, \|z_{3}\|_{X^{2s-}_T}).
\end{align*}

\noi
Hence, we obtain \eqref{P5}, 
provided that 
$ \s  < \frac 52s$.

\smallskip

\noi
{\bf $\bullet$ Case (4):} $N_3 \sim N_4 \gg N_1 \geq  N_2$.
\newline
\noi
$\circ$ \underline{Subcase (4.a):} 
$N_1, N_2  \ll N_3^\frac{1}{2} $.
\quad 
By the bilinear Strichartz estimate, 
 we have
\begin{align*}
\text{LHS of } \eqref{P5}
&    \les N_3^\s\|z_{1, N_1}   z_{3, N_3} \|_{L^2_{T, x}} \|z_{1, N_2}  w_{N_4}\|_{L^2_{T, x}}\\
& \les T^{0+} N_1^{1-s- }N_2^{1-s - }  N_3^{\s-  2s -\frac 12 +}N_4^{-\frac{1}{2}+}
C(\|z_1\|_{A^s_5(T)}, \|z_3\|_{X^{2s-}_T})\\
& \le T^{0+}  N_1^{\s - 3 s  +}
C(\|z_1\|_{A^s_5(T)}, \|z_3\|_{X^{2s-}_T}).
\end{align*}

\noi
Hence, we obtain  \eqref{P5}, 
provided that $ \s  < 3s$.

\smallskip

\noi
$\circ$ \underline{Subcase (4.b):} $N_1, N_2\ges N_3^\frac{1}{2} $.
\quad 
By H\"older's inequality and Lemma \ref{LEM:Str}, 
we have
\begin{align*}
\text{LHS of } \eqref{P5}
& \les N_3^\s \| z_{1, N_1} \|_{L^{5}_{T, x}}
  \| z_{1, N_2} \|_{L^{5}_{T, x}}
  \| z_{3, N_3} \|_{L^{\frac{10}{3}}_{T, x}}
 \| w_{N_4} \|_{L^\frac{10}{3}_{T, x}}\notag\\
& \le T^{0+}
N_1^{ -s} N_2^{-s}N_3^{\s-2s+} 
 C(\|z_1\|_{A^s_5(T)}, \|z_3\|_{X^{2s-}_T})\notag\\
& \le T^{0+}
N_1^{\s-3 s+} 
 C(\|z_1\|_{A^s_5(T)}, \|z_{3}\|_{X^{2s-}_T}).
\end{align*}

\noi
Hence, we obtain \eqref{P5}, 
provided that $ \s  < 3s$.

\smallskip

\noi
$\circ$ \underline{Subcase (4.c):} $N_1\ges N_3^\frac{1}{2} \gg N_2$.
\quad 
By the bilinear Strichartz estimate
and Lemma \ref{LEM:Str}, 
 we have
\begin{align*}
\text{LHS of } \eqref{P5}
&  \les N_3^\s  \| z_{1, N_2} \|_{L^{5}_{T, x}}   \|z_{1, N_2}  z_{3, N_3} \|_{L^2_{T, x}} 
 \| w_{N_4} \|_{L^\frac{10}{3}_{T, x}}\notag\\
& \les T^{0+}
N_1^{ - s} N_2^{1-s-}
N_3^{\s- 2s- \frac 12+}
 C(\|z_1\|_{A^s_5(T)}, \|z_{3}\|_{X^{2s-}_T})\\
& \le T^{0+}
N_1^{\s- 3s+} 
 C(\|z_1\|_{A^s_5(T)}, \|z_{3}\|_{X^{2s-}_T}).
\end{align*}

\noi
Hence, we obtain \eqref{P5}, 
provided that $ \s  < 3s$.

\smallskip

Putting all the cases together, 
we conclude that \eqref{P5} holds, 
provided that  $\s < \frac 52 s$.
This completes the proof of Lemma \ref{LEM:Z5}.
\end{proof}

\subsection{On the fourth order term $z_7$}\label{SUBSEC:Z7}

Next,  we study the following fourth order term:
\begin{align}
  z_7  = -i  \sum_{\substack{j_1 + j_2 + j_3 = 7\\j_1, j_2, j_3 \in \{1, 3, 5\}  }}\int_0^t S(t - t') z_{j_1} \cj{z_{j_2}}z_{j_3} (t') dt'.
\label{Q1}
\end{align}

\noi
In this case, 
there are two possibilities
for $(j_1, j_2, j_3)$ in \eqref{Q1}:
$(1, 3, 3)$ and $(1, 1, 5)$ up to permutations.
We denote by $\wt z_7$ 
the contribution to $z_7$ 
from 
$(j_1, j_2, j_3) = (1, 3, 3)$  (up to permutations).
Note that the contribution to $z_7$ from 
$(j_1, j_2, j_3) = (1, 1, 5)$  (up to permutations)
corresponds to $\zeta_7$ defined in \eqref{zeta2}.
Dropping 
the complex conjugate, we have
\begin{align}
  \wt z_7 &  \sim   \int_0^t S(t - t') z_{1} z_3 z_{3} (t') dt',   \label{Q2}\\
\zeta_7  & \sim   \int_0^t S(t - t') z_{1} z_1 z_{5} (t') dt'.
\label{Q3}
\end{align}

\noi
The following lemma shows that 
$\wt z_7$ and $\zeta_7$ enjoy a further gain of derivatives
as compared to $z_1$, $z_3$, and $z_5$.

\begin{lemma}\label{LEM:Z7}
Given   $\phi \in H^s(\R^3)$, 
let  $\phi^\o$ be the Wiener randomization  of $\phi$
 defined in \eqref{rand}.

\smallskip

\noi
\textup{(i)}
Let $ 0 < s < \frac 12$.
Then, given any $\s < 3s$, we have 
\begin{align*}
   \wt z_7  \in X^\s_\textup{loc} ,
\end{align*}

\noi
 almost surely.

\smallskip

\noi
\textup{(ii)}
Let $ 0 < s < \frac 25$.
Then, given any $\s < \frac{11}{4}s$, we have 
\begin{align}
\zeta_7  \in X^\s_\textup{loc} ,
\label{Q4}
\end{align}

\noi
 almost surely.

\end{lemma}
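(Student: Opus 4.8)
The plan is to follow the template of Proposition~\ref{PROP:Z3} and Lemma~\ref{LEM:Z5}. By Lemma~\ref{LEM:lin}, in both parts it suffices to control the trilinear pairing of $\jb{\nb}^\s$ applied to the relevant product against a test function $w$ with $\|w\|_{Y^0_T}\le 1$; dropping the complex conjugates (which play no role, cf.~\eqref{Q2}--\eqref{Q3}), this reduces part \textup{(i)} to
\[
\bigg| \int_0^T \int_{\R^3} \jb{\nb}^\s\big( z_1 z_3 z_3\big)\, w\, dx\, dt\bigg|
\le T^\theta C\big( \|z_1\|_{A^s(T)}, \|z_3\|_{X^{2s-}_T}\big)
\]
and part \textup{(ii)} to the same estimate with $z_1 z_3 z_3$ replaced by $z_1 z_1 z_5$ and $\|z_3\|_{X^{2s-}_T}$ replaced by $\|z_5\|_{X^{\frac52 s-}_T}$, where $A^s(T)$ is an auxiliary norm of $z_1$ built from a handful of (admissible, and, since $s>0$, also $L^\infty_x$-) Strichartz norms of $\jb{\nb}^s z_1$, modeled on \eqref{O1a} and on the definition of $A^s_5(T)$. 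By Lemmas~\ref{LEM:PStr} and~\ref{LEM:Hs}, $\|z_1\|_{A^s(T)}$ is almost surely finite with a positive power of $T$ to spare, while $\|z_3\|_{X^{2s-}_T}$ and $\|z_5\|_{X^{\frac52 s-}_T}$ are almost surely finite by Proposition~\ref{PROP:Z3} and Lemma~\ref{LEM:Z5}. Inserting a Littlewood--Paley decomposition in the three input factors (frequencies $N_1,N_2,N_3$) and in $w$ (frequency $N_4$), it then suffices to prove the frequency-localized version of the above with an extra gain $N_{\max}^{0-}$, which sums.

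Next I would run the standard case-by-case analysis on $N_1,N_2,N_3,N_4$, distinguishing according to which factors sit at the top frequency $N_{\max}$. When there is a genuine separation --- one factor at $N_{\max}\sim N_4$ and the remaining two at frequencies $\ll N_{\max}^{1/2}$ --- I would pair the top factor and $w$ each with a low-frequency factor and apply the bilinear refinement (Lemma~\ref{LEM:biStr}), using $\|\P_N z_1\|_{Y^0_T}\les N^{-s}\|\P_N\phi^\o\|_{H^s}$, $\|\P_N z_3\|_{Y^0_T}\les N^{-2s+}\|z_3\|_{X^{2s-}_T}$, and $\|\P_N z_5\|_{Y^0_T}\les N^{-\frac52 s+}\|z_5\|_{X^{\frac52 s-}_T}$. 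A short power count shows that the extremal configuration in part \textup{(i)} is $N_4\sim N_1\sim N_{\max}\gg N_2,N_3$ (with $z_1$ at $N_1$ and the two $z_3$'s at $N_2,N_3$): there one gains $N_2^{1-2s}N_3^{1-2s}N_{\max}^{-1-s}$, absorbs $N_2^{1-2s},N_3^{1-2s}\le N_{\max}^{(1-2s)/2}$ using $N_2,N_3\ll N_{\max}^{1/2}$ (legitimate since $1-2s>0$ as $s<\tfrac12$), and is left with $N_{\max}^{\s-3s}$, i.e.\ the threshold $\s<3s$. For part \textup{(ii)} the extremal configuration is again $N_4\sim N_1\sim N_{\max}\gg N_2,N_3$ (with $z_1$ at $N_1,N_2$ and $z_5$ at $N_3$): one gains $N_2^{1-s}N_3^{1-\frac52 s}N_{\max}^{-1-s}$, absorbs $N_2^{1-s}\le N_{\max}^{(1-s)/2}$ and $N_3^{1-\frac52 s}\le N_{\max}^{(1-\frac52 s)/2}$, and is left with $N_{\max}^{\s-\frac{11}{4}s}$. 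In all other configurations (two or more input factors comparable to $N_{\max}$, or no useful separation) I would instead use H\"older's inequality with Lemma~\ref{LEM:Str} --- exploiting the $L^p_T L^\infty_x$-control of $z_1$ (available since $s>0$), or the gain of integrability for $z_3$ from Lemma~\ref{LEM:Z3_2} on one of the two $z_3$-factors; since several factors then carry their full regularity gain, these close with a strictly weaker restriction ($\s<3s$, resp.\ $\s<\tfrac72 s$, or better) and are harmless.

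Two points will need the most care. First, the combinatorial bookkeeping: as in the proof of Lemma~\ref{LEM:Z5}, the $N_{\max}$-versus-$N_{\max}^{1/2}$ dichotomy must be organized into a finite list of subcases, and in each one must check that the Strichartz/bilinear-Strichartz exponents and the Lebesgue exponents balance --- the only real choice being which factor goes into which norm (this is where, for part \textup{(i)}, Lemma~\ref{LEM:Z3_2} is convenient). Second, and conceptually the heart of part \textup{(ii)}: in the extremal bilinear-Strichartz configuration the low-frequency $z_5$-factor is bounded by $N_3^{1-\frac52 s}$ and absorbed into $N_{\max}^{(1-\frac52 s)/2}$, which requires $1-\tfrac52 s>0$, i.e.\ exactly $s<\tfrac25$ --- the precise hypothesis of part \textup{(ii)}, mirroring the role of $s<\tfrac12$ in Lemma~\ref{LEM:Z5}. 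Summing the frequency-localized estimates over all dyadic blocks and invoking Lemmas~\ref{LEM:PStr}, \ref{LEM:Hs}, Proposition~\ref{PROP:Z3}, and Lemma~\ref{LEM:Z5} then yields \eqref{Q4} and its analogue for $\wt z_7$, the power $T^\theta$ coming from the $T$-dependence in those lemmas and in Lemma~\ref{LEM:biStr}.
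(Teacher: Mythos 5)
Your proposal is correct and follows essentially the same route as the paper: reduction to the trilinear dual pairing via Lemma~\ref{LEM:lin}, Littlewood--Paley case analysis, bilinear Strichartz (Lemma~\ref{LEM:biStr}) in the frequency-separated regimes and H\"older + Lemma~\ref{LEM:Str} (with Lemma~\ref{LEM:Z3_2} for the $z_3$-factors) in the comparable regimes, then summing via Lemmas~\ref{LEM:PStr}, \ref{LEM:Hs}, Proposition~\ref{PROP:Z3}, and Lemma~\ref{LEM:Z5}. You correctly pin down the extremal configuration $N_1\sim N_4\sim N_{\max}\gg N_2,N_3$ in both parts (yielding $\s<3s$ and $\s<\tfrac{11}{4}s$, respectively) and you correctly trace the hypothesis $s<\tfrac25$ in part (ii) to the sign of $1-\tfrac52 s$ in the bilinear-Strichartz absorption step, which is exactly the source in the paper; the only minor cosmetic discrepancy is that the auxiliary $A^s$-norms the paper actually uses (e.g.\ $L^{10+}_T L^{10}_x$, $L^{15}_T L^{15/4}_x$, $L^{3q}_T L^{18/5+}_x$) are finite-$r$, non-admissible probabilistic Strichartz norms rather than $L^\infty_x$-type norms, but this does not affect the argument.
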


\begin{proof}
(i) We first estimate $\wt z_7$ in \eqref{Q2}.
Fix $0 < s < \frac 12$.
We proceed as in  the proofs of Proposition \ref{PROP:Z3} and Lemma~\ref{LEM:Z5}.
In view of 
 Lemmas \ref{LEM:PStr} and  \ref{LEM:Hs}
and Proposition \ref{PROP:Z3}, 
it suffices to prove that there exists $\theta > 0$ such that 
\begin{align}
\bigg| \int_0^T \int_{\R^3}
N_{\max}^\s \P_{N_1} z_1 \cdot  \P_{N_2} z_3 
& \cdot \P_{N_3}  z_3 \cdot  \P_{N_4}  w  \, dx dt\bigg|\notag\\
& \leq  T^\theta N_{\max}^{0-} C(\|z_1\|_{A^s_7(T)}, \|z_3\|_{X^{2s-}_T}).
\label{Q5}
\end{align}

\noi
for all $N_1, \dots, N_4 \in 2^{\NB_0}$
and  $\|w\|_{Y^{0}_T} \leq 1$, 
where  $A^s_7(T)$ is given by 
\[ \|z_1\|_{A^s_7(T)}: = \max\Big(
\| \jb{\nb}^s z_{1} \|_{L^{5}_{T, x}}, 
\| \jb{\nb}^s z_{1} \|_{L^{10+}_T L^{10}_x}, 
\| \jb{\nb}^s z_1\|_{L^{15}_T L^{\frac{15}{4}}_x}, 
\| z_1(0)\|_{H^s}
\Big). \]

\noi
Without loss of generality, 
we assume that $N_2 \geq N_3$.

\smallskip

\noi
{\bf $\bullet$  Case (1):} $N_1 \sim N_2 \sim N_{\max}$.
\newline
\noi
$\circ$ \underline{Subcase (1.a):} $N_3 \ll N_1^\frac{1}{2}$.
\quad 
By the bilinear Strichartz estimate and 
 Lemma \ref{LEM:Str}, 
 we have
\begin{align*}
\text{LHS of } \eqref{Q5}
&  \les N_1^\s   \| z_{1, N_1} \|_{L^{5}_{T, x}}  \|z_{3, N_2}  z_{3, N_3} \|_{L^2_{T, x}}
 \| w_{N_4} \|_{L^\frac{10}{3}_{T, x}}\notag\\
& \les
T^{0+}
N_1^{\s - s } N_2^{-2s - \frac 12 + }
N_3^{1 - 2s-}
 \|\jb{\nb}^s z_{1, N_1} \|_{L^5_{T, x}}
\bigg(\prod_{j = 2}^3 \|z_{3, N_j}\|_{Y^{2s-}_T}\bigg)
\\
& \le T^{0+}
N_1^{\s-4s+} 
 C(\|z_1\|_{A^s_7(T)}, \|z_{3}\|_{X^{2s-}_T}).
\end{align*}

\noi
Hence, we obtain  \eqref{Q5}, 
provided that $\s < 4s$. 

%
%

\smallskip

\noi
$\circ$ \underline{Subcase (1.b):} $N_3\ges N_1^\frac{1}{2} $.
\quad 
By H\"older's inequality and Lemma \ref{LEM:Str}, 
we have
\begin{align*}
\text{LHS of } \eqref{Q5}
& \les N_1^\s \|  z_{1, N_1} \|_{L^{10}_{T, x}}
  \|   z_{3, N_2} \|_{L^\frac{10}{3}_{T, x}}
  \| z_{3, N_3} \|_{L^{\frac{10}{3}}_{T, x}}
 \| w_{N_4} \|_{L^\frac{10}{3}_{T, x}}\notag\\
& \le T^{0+}
N_1^{\s -s} N_2^{-2s+}N_3^{-2s+} 
 C(\|z_1\|_{A^s_7(T)}, \|z_3\|_{X^{2s-}_T})\notag\\
& \le T^{0+}
N_1^{\s-4s+} 
 C(\|z_1\|_{A^s_7(T)}, \|z_{3}\|_{X^{2s-}_T}).
\end{align*}

\noi
Hence, we obtain  \eqref{Q5}, 
provided that $\s < 4s$.

\smallskip

\noi
{\bf $\bullet$  Case (2):} $N_1 \sim N_4 \sim N_{\max} \gg N_2 \geq  N_3$.
\newline
\noi
$\circ$ \underline{Subcase (2.a):} 
$N_2, N_3  \ll N_1^\frac{1}{2} $.
\quad 
By the bilinear Strichartz estimate, 
 we have
\begin{align*}
\text{LHS of }   \eqref{Q5}
&     \les N_1^\s\|z_{1, N_1}   z_{3, N_2} \|_{L^2_{T, x}} \|z_{3, N_3}  w_{N_4}\|_{L^2_{T, x}}\\
& \les T^{0+} N_1^{\s-s-\frac 12 + }N_2^{1-2s - }  N_3^{1-  2s -}N_4^{-\frac{1}{2}+}
\|\P_{N_1} \phi^\o\|_{H^{s}}
\bigg(\prod_{j = 2}^3
 \|z_{3, N_j}\|_{Y^{2s-}_T}
\bigg)
\\
& \le T^{0+}  N_1^{\s -  3 s  +}
C(\|z_1\|_{A^s_7(T)}, \|z_3\|_{X^{2s-}_T}).
\end{align*}

\noi
Hence, we obtain 
 \eqref{Q5}, 
provided that $\s < 3s$.

\smallskip

\noi
$\circ$ \underline{Subcase (2.b):} $N_2, N_3\ges N_1^\frac{1}{2} $.
\quad 
By $L^{10}_{T, x}, L^{\frac{10}{3}}_{T, x}, 
L^{\frac{10}{3}}_{T, x}, L^{\frac{10}{3}}_{T, x}$-H\"older's
 inequality and Lemma~\ref{LEM:Str}, 
we have
\begin{align*}
\text{LHS of } \eqref{Q5}
& \le T^{0+}
N_1^{\s -s} N_2^{-2s+}N_3^{-2s+} 
 C(\|z_1\|_{A^s_7(T)}, \|z_3\|_{X^{2s-}_T})\notag\\
& \le T^{0+}
N_1^{\s-3 s+} 
 C(\|z_1\|_{A^s_7(T)}, \|z_{3}\|_{X^{2s-}_T}).
\end{align*}

\noi
Hence, we obtain  \eqref{Q5}, 
provided that $\s < 3s$.

\smallskip

\noi
$\circ$ \underline{Subcase (2.c):} $N_2\ges N_1^\frac{1}{2} \gg N_3$.
\quad 
By the bilinear Strichartz estimate
and Lemma \ref{LEM:Str},  
 we have
\begin{align*}
\text{LHS of } \eqref{Q5}
&  \les N_1^\s    \| z_{1, N_1} \|_{L^{5}_{T, x}} 
 \| z_{3, N_2} \|_{L^\frac{10}{3}_{T, x}}
 \|z_{3, N_3}  w_{N_4} \|_{L^2_{T, x}}
 \notag\\
& \le T^{0+}
N_1^{\s - s  } N_2^{-2s+}
N_3^{1- 2s-}N_4^{-\frac{1}{2}+}
 C(\|z_1\|_{A^s_7(T)}, \|z_{3}\|_{X^{2s-}_T})\\
& \le T^{0+}
N_1^{\s- 3s+} 
 C(\|z_1\|_{A^s_7(T)}, \|z_{3}\|_{X^{2s-}_T}).
\end{align*}

\noi
Hence, we obtain \eqref{Q5}, 
provided that $\s < 3s$.

\smallskip

\noi
{\bf $\bullet$  Case (3):} $N_2 \sim N_3 \sim N_{\max} \gg N_1$.
\newline
\noi
$\circ$ \underline{Subcase (3.a):} $N_1 \ll N_2^\frac{1}{2}$.
\quad 
By the bilinear Strichartz estimate 
and Lemma \ref{LEM:Str}, 
 we have
\begin{align*}
\text{LHS of } \eqref{Q5}
&  \les N_2^\s  
 \|z_{1, N_1}  z_{3, N_2} \|_{L^2_{T, x}}
 \| z_{3, N_3} \|_{L^{5}_{T, x}} 
 \| w_{N_4} \|_{L^\frac{10}{3}_{T, x}}\notag\\
& \les
T^{0+}
N_1^{ 1- s -  } N_2^{\s -2s - \frac 12 + }
N_3^{-s}
\| \P_{N_1} \phi^\o\|_{H^s}
 \|z_{3, N_2}\|_{Y^{2s-}_T}
\| \jb{\nb}^s z_{3, N_3} \|_{L^{5}_{T, x}}\\
\intertext{By applying Lemma \ref{LEM:Z3_2}
and the fractional Leibniz rule,} 
& \les
T^{\frac{1}{10}+}
N_2^{\s-\frac 72 s+} 
\| \P_{N_1} \phi^\o\|_{H^s}
 \|z_{3, N_2}\|_{Y^{2s-}_T}
\| \jb{\nb}^s z_1\|_{L^{15}_T L^{\frac{15}{4}}_x}
\| z_1\|_{L^{15}_T L^{\frac{15}{4}}_x}^2\\
& \le T^{\frac{1}{10}+}
N_2^{\s-\frac 72s+} 
 C(\|z_1\|_{A^s_7(T)}, \|z_{3}\|_{X^{2s-}_T}).
\end{align*}

\noi
Hence, we obtain 
 \eqref{Q5}, 
provided that $\s < \frac 72s$.

%
%

\smallskip

\noi
$\circ$ \underline{Subcase (3.b):} $N_1\ges N_2^\frac{1}{2} $.
\quad 
By $L^{10}_{T, x}, L^{\frac{10}{3}}_{T, x}, 
L^{\frac{10}{3}}_{T, x}, L^{\frac{10}{3}}_{T, x}$-H\"older's
 inequality and Lemma~\ref{LEM:Str}, 
we have
\begin{align*}
\text{LHS of } \eqref{Q5}
& \le T^{0+}
N_1^{ -s} N_2^{\s -2s+}N_3^{-2s+} 
 C(\|z_1\|_{A^s_7(T)}, \|z_3\|_{X^{2s-}_T})\notag\\
& \le T^{0+}
N_1^{\s-\frac 92s+} 
 C(\|z_1\|_{A^s_7(T)}, \|z_{3}\|_{X^{2s-}_T}).
\end{align*}

\noi
Hence, we obtain  \eqref{Q5}, 
provided that $\s < \frac 92s$.

\smallskip

\noi
{\bf $\bullet$ Case (4):} $N_2 \sim N_4 \gg N_1, N_3$.
\newline
\noi
$\circ$ \underline{Subcase (4.a):} 
$N_1, N_3  \ll N_2^\frac{1}{2} $.
\quad 
By the bilinear Strichartz estimate, 
 we have
\begin{align*}
\text{LHS of } \eqref{Q5}
&    \les N_2^\s\|z_{1, N_1}   z_{3, N_2} \|_{L^2_{T, x}} \|z_{3, N_3}  w_{N_4}\|_{L^2_{T, x}}\\
& \le T^{0+} N_1^{1-s- }N_2^{\s -2s -\frac 12+ }  N_3^{1-  2s-}N_4^{-\frac{1}{2}+}
C(\|z_1\|_{A^s_7(T)}, \|z_3\|_{X^{2s-}_T})\\
& \le T^{0+}  N_1^{\s - \frac 72  s  +}
C(\|z_1\|_{A^s_7(T)}, \|z_3\|_{X^{2s-}_T}).
\end{align*}

\noi
Hence, we obtain  \eqref{Q5}, 
provided that $\s < \frac 72s$.

\smallskip

\noi
$\circ$ \underline{Subcase (4.b):} $N_1, N_3\ges N_2^\frac{1}{2} $.
\quad 
By $L^{10}_{T, x}, L^{\frac{10}{3}}_{T, x}, 
L^{\frac{10}{3}}_{T, x}, L^{\frac{10}{3}}_{T, x}$-H\"older's
 inequality and Lemma~\ref{LEM:Str}, 
we have
\begin{align*}
\text{LHS of } \eqref{Q5}
& \le T^{0+}
N_1^{ -s} N_2^{\s -2s+}N_3^{-2s+} 
 C(\|z_1\|_{A^s_7(T)}, \|z_3\|_{X^{2s-}_T})\notag\\
& \le T^{0+}
N_2^{\s-\frac 72 s+} 
 C(\|z_1\|_{A^s_7(T)}, \|z_{3}\|_{X^{2s-}_T}).
\end{align*}

\noi
Hence, we obtain  \eqref{Q5}, 
provided that $\s < \frac 72s$.

\smallskip

\noi
$\circ$ \underline{Subcase (4.c):} $N_1\ges N_2^\frac{1}{2} \gg N_3$.
\quad 
By the bilinear Strichartz estimate and Lemma \ref{LEM:Str}, 
 we have
\begin{align*}
\text{LHS of } \eqref{Q5}
&  \les N_2^\s  \| z_{1, N_1} \|_{L^{5}_{T, x}}   \|z_{3, N_2}  z_{3, N_3} \|_{L^2_{T, x}} 
 \| w_{N_4} \|_{L^\frac{10}{3}_{T, x}}\notag\\
& \le T^{0+}
N_1^{ - s} N_2^{\s -2s-\frac 12 + }
N_3^{1- 2s-}
 C(\|z_1\|_{A^s_7(T)}, \|z_{3}\|_{X^{2s-}_T})\\
& \le T^{0+}
N_1^{\s-\frac 72s+} 
 C(\|z_1\|_{A^s_7(T)}, \|z_{3}\|_{X^{2s-}_T}).
\end{align*}

\noi
Hence, we obtain  \eqref{Q5}, 
provided that $\s < \frac 72s$.

\noi
$\circ$ \underline{Subcase (4.d):} $N_3\ges N_2^\frac{1}{2} \gg N_1$.
\quad 
By the bilinear Strichartz estimate and Lemma \ref{LEM:Str}, 
 we have
\begin{align*}
\text{LHS of } \eqref{Q5}
&  \les N_2^\s 
  \|z_{1, N_1}  z_{3, N_2} \|_{L^2_{T, x}} 
 \| z_{3, N_3} \|_{L^{5}_{T, x}} 
 \| w_{N_4} \|_{L^\frac{10}{3}_{T, x}}\notag\\
& \les T^{0+}N_1^{ 1- s -  } N_2^{\s -2s - \frac 12 + }
N_3^{-s}
\| \P_{N_1} \phi^\o\|_{H^s}
 \|z_{3, N_2}\|_{Y^{2s-}_T}
\| \jb{\nb}^s z_{3, N_3} \|_{L^{5}_{T, x}}\\
\intertext{Proceeding as in Subcase (3.a) with Lemma \ref{LEM:Z3_2},}
& \le T^{\frac{1}{10}}
N_1^{\s- 3s+} 
 C(\|z_1\|_{A^s_7(T)}, \|z_{3}\|_{X^{2s-}_T}).
\end{align*}

\noi
Hence, we obtain  \eqref{Q5}, 
provided that $\s < 3s$.

\smallskip

Putting all the cases together, 
we conclude that \eqref{Q5} holds
when 
$ \s  < 3s$.

\medskip

\noi
(ii) 
Next, we estimate $\zeta_7$ in \eqref{Q3}.
This term has a similar structure
to $z_5$ in~\eqref{P3};
the only difference appears in the third factor.
Hence,  we can estimate $\zeta_7$ 
simply by replacing the regularity $2s-$ (for $z_3$; see Proposition \ref{PROP:Z3}) with 
$\frac{5}{2} s-$ (for $z_5$; see Lemma \ref{LEM:Z5})
in the proof  of Lemma~\ref{LEM:Z5}.
In the following, 
we only indicate the necessary modifications
on the powers of dyadic parameters
in the proof of Lemma \ref{LEM:Z5}.

\smallskip

\noi
{\bf $\bullet$  Case (1):} $N_1 \sim N_2 \sim N_{\max}$.
\newline
\noi
$\circ$ \underline{Subcase (1.a):} 
$N_3 \ll N_1^\frac{1}{2}$.
\quad It suffices to note that 
\begin{align*}
N_1^{\s - s - \frac 12+} N_2^{-s}
N_3^{1 - \frac52 s-}
%
& \les   N_1^{\s - \frac{13}{4} s  +}
\les  N_{\max}^{0-} , 
\end{align*}

\noi
provided that 
$ \s  < \frac{13}{4} s$
and 
$0 <  s < \frac 25$.
The modification for Subcase (1.b)
is straightforward
under the same 
regularity restriction.

\smallskip

\noi
{\bf $\bullet$  Case (2):} $N_1 \sim N_3 \sim N_{\max} \gg N_2$.
\newline
\noi
$\circ$ \underline{Subcase (2.a):} 
$N_2  \ll N_1^\frac{1}{2} $.
\quad 
It suffices to note that 
\begin{align*}
N_1^{\s - s } N_2^{1-s-}
N_3^{- \frac 52s-\frac 12 +}
%
 \les  N_1^{\s - 4 s  +}
\les  N_{\max}^{0-} , 
\end{align*}

\noi
provided that 
$\s < 4s$ and $0 \leq  s < 1$.\footnote{The lower bound on $s$ is needed only for Subcase (2.b).
Similar comments apply to the following cases and also to  the proof of Proposition \ref{PROP:Zk}.}
The modification for Subcase (2.b) 
is straightforward
under the same regularity restriction.

\smallskip

\noi
{\bf $\bullet$  Case (3):} $N_1 \sim N_4 \sim N_{\max} \gg N_2, N_3$.
\newline
\noi
$\circ$ \underline{Subcase (3.a):} 
$N_2, N_3  \ll N_1^\frac{1}{2} $.
\quad 
It suffices to note that  
\begin{align*}
N_1^{\s-s-\frac 12 + }N_2^{1-s - }  N_3^{1-  \frac 52s -}N_4^{-\frac{1}{2}+}
 \les  N_1^{\s - \frac {11}4 s  +}
\les  N_{\max}^{0-} , 
\end{align*}

\noi
provided that 
\begin{align}
 \s  < \frac {11}4s
 \qquad \text{and}\qquad 
0 <  s < \frac 25.
\label{QQ3}
\end{align}

\noi
The modifications for Subcases (3.b), (3.c),  and (3.d)
are straightforward
under the regularity restriction \eqref{QQ3}.

\smallskip

\noi
{\bf $\bullet$ Case (4):} $N_3 \sim N_4 \gg N_1 \geq  N_2$.
\newline
\noi
$\circ$ \underline{Subcase (4.a):} 
$N_1, N_2  \ll N_3^\frac{1}{2} $.
\quad 
It suffices to note that 
\begin{align*}
N_1^{1-s- }N_2^{1-s - }  N_3^{\s-  \frac 52 s -\frac 12 +}N_4^{-\frac{1}{2}+}
 \les   N_1^{\s - \frac 72 s  +}
\les  N_{\max}^{0-} , 
\end{align*}

\noi
provided that $\s < \frac 72 s$ and $0 \leq s < 1$.
The modifications for Subcases (4.b) and  (4.c)
are straightforward
under the same regularity restriction.

\smallskip

Putting all the cases together, 
we conclude that \eqref{Q4} holds
under the regularity restriction~\eqref{QQ3}.
This completes the proof of Lemma \ref{LEM:Z7}.
\end{proof}

\subsection{On the ``unbalanced'' higher order terms $\zeta_{2k-1}$}\label{SUBSEC:unbalanced}

In this subsection, 
we study the regularity properties of the
unbalanced higher order terms $\zeta_{2k-1}$ defined in \eqref{zeta2}.
Note that $(j_1, j_2, j_3) = (1, 1, 2k-3)$ up to permutations.
Then, by dropping the complex conjugate, we have
\begin{align}
\zeta_{2k-1}\sim    \int_0^t S(t - t') z_{1} z_1 \zeta_{2k-3} (t') dt'.
\label{QQQ2}
\end{align}

We first present the proof of Proposition \ref{PROP:Zk}.

\begin{proof}[Proof of Proposition \ref{PROP:Zk}]

We proceed by induction.
From \eqref{intro1}, 
we see that the recursive relation \eqref{al1} is satisfied
when $k = 2, 3, 4$.
In the following, by assuming
\begin{align*}
  \zeta_{2k -3} \in X^\s_\textup{loc} 
\end{align*}

\noi
for $\s < \al_{k-1} \cdot s $  almost surely,
where $\al_{k-1}$ satisfies \eqref{al2}, 
we  prove \eqref{al1} and \eqref{QQQ3a}. 
As in the proof of Lemma \ref{LEM:Z7} (ii), the proof follows from the proof of Lemma \ref{LEM:Z5}
by replacing $z_3$ and $2s-$ with  $\zeta_{2k-3}$ and  $\al_{k-1} \cdot s-$, respectively.
Therefore, 
we only indicate the necessary modifications
on the powers of dyadic parameters
in the proof of Lemma \ref{LEM:Z5}.

\smallskip

\noi
{\bf $\bullet$  Case (1):} $N_1 \sim N_2 \sim N_{\max}$.
\newline
\noi
$\circ$ \underline{Subcase (1.a):} 
$N_3 \ll N_1^\frac{1}{2}$.
\quad It suffices to note that 
\begin{align*}
N_1^{\s - s - \frac 12+} N_2^{-s}
N_3^{1 -  \al_{k-1} s-}
%
& \les   N_1^{\s - \frac{\al_{k-1}+4}{2} s  +}
\les  N_{\max}^{0-} , 
\end{align*}

\noi
provided that 
\begin{align}
 \s  < \frac{\al_{k-1}+4}{2} s
 \qquad \text{and}
 \qquad 0 < s < \al_{k-1}^{-1}.
\label{QQQ6}
\end{align}

\noi
The modification for Subcase (1.b)
is straightforward,
giving the regularity restriction~\eqref{QQQ6}.

\smallskip

\noi
{\bf $\bullet$  Case (2):} $N_1 \sim N_3 \sim N_{\max} \gg N_2$.
\newline
\noi
$\circ$ \underline{Subcase (2.a):} 
$N_2 \ll N_1^\frac{1}{2} $.
\quad 
It suffices to note that 
\begin{align*}
N_1^{\s - s } N_2^{1-s-}
N_3^{- \al_{k-1}s-\frac 12 +}
%
 \les  N_1^{\s - \frac{2 \al_{k-1} + 3}{2} s  +}
\les  N_{\max}^{0-} , 
\end{align*}

\noi
provided that 
\begin{align}
 \s  <  \frac{2\al_{k-1}+3}{2}s
  \qquad \text{and}
 \qquad 0 \leq s < 1.
\label{QQQ7}
\end{align}

\noi
The modification for Subcase (2.b)
is straightforward,
giving the regularity restriction~\eqref{QQQ7}.

\smallskip

\noi
{\bf $\bullet$  Case (3):} $N_1 \sim N_4 \sim N_{\max} \gg N_2, N_3$.
\newline
\noi
$\circ$ \underline{Subcase (3.a):} 
$N_2, N_3  \ll N_1^\frac{1}{2} $.
\quad 
It suffices to note that  
\begin{align*}
N_1^{\s-s-\frac 12 + }N_2^{1-s - }  N_3^{1-  \al_{k-1}s -}N_4^{-\frac{1}{2}+}
 \les  N_1^{\s - \frac {\al_{k-1}+3}2 s  +}
\les  N_{\max}^{0-} , 
\end{align*}

\noi
provided that 
\begin{align}
 \s  < \frac {\al_{k-1}+3}2s
  \qquad \text{and}
 \qquad 0 < s < \al_{k-1}^{-1}.
\label{QQQ8}
\end{align}

\noi
The modifications for Subcases (3.b), (3.c),  and (3.d)
are straightforward,
giving the regularity restriction \eqref{QQQ8}.

\smallskip

\noi
{\bf $\bullet$ Case (4):} $N_3 \sim N_4 \gg N_1 \geq  N_2$.
\newline
\noi
$\circ$ \underline{Subcase (4.a):} 
$N_1, N_2  \ll N_3^\frac{1}{2} $.
\quad 
It suffices to note that 
\begin{align*}
N_1^{1-s- }N_2^{1-s - }  N_3^{\s-  \al_{k-1} s -\frac 12 +}N_4^{-\frac{1}{2}+}
 \les   N_1^{\s - (\al_{k-1}+1) s  +}
\les  N_{\max}^{0-} , 
\end{align*}

\noi
provided that 
\begin{align}
 \s  < (\al_{k-1}+1)s 
 \qquad \text{and}
 \qquad 0 \leq s < 1.
\label{QQQ9}
\end{align}

\noi
The modifications for Subcases (4.a) and  (4.b)
are straightforward,
giving the regularity restriction \eqref{QQQ9}.

Since $\al_{k-1}$ satisfies \eqref{al2}, 
 we have $\al_{k-1} \geq 1$,
 which in turn implies 
 $\frac {\al_{k-1}+3}2 \leq \al_{k-1} + 1$.
 Therefore, 
we conclude \eqref{al1} and \eqref{QQQ3a}.
\end{proof}

We conclude this section by 
proving a gain of space-time  integrability for $\zeta_{2k-1}$
analogous to Lemma \ref{LEM:Z3_2}.

\begin{lemma}\label{LEM:Zk_2}
Let $s \geq 0$.
Given  $\phi \in H^s(\R^3)$,  let $\phi^\o$ be its Wiener randomization
defined in~\eqref{rand}.
Fix an integer $k \geq 3$.
Then, for any finite $q, r > 2$, 
there exist $\theta_k = \theta_k(q, r) > 0$, 
$\dl_k  = \dl_k(q, r) \in (0, 1)$, and  
$1\leq q_k< \infty$ such that 
\begin{align}
\| \P_N \zeta_{2k-1}\|_{L^q_T L^r_x}
\les \begin{cases}
\rule[-6mm]{0pt}{15pt} T^{\theta_k} \| z_1\|_{L^{q_k}_T L^{2}_x}^{(2k-1)(1-\dl_k)} 
\| z_1\|_{L^{q_k}_T L^{6}_x}^{(2k-1)\dl_k} 
& \text{when } 1 \leq r < 6, \\
 T^{\theta_k} N^{\frac{1}{2} - \frac 3 r+}   \| z_1\|_{L^{q_k}_T L^{2}_x}^{(2k-1)(1-\dl_k)} 
\| z_1\|_{L^{q_k}_T L^{6}_x}^{(2k-1)\dl_k} 
&  \text{when }  r \geq  6, \\
\end{cases}
\label{QQQ10}
\end{align}

\noi
for any $T > 0$ and $N \in 2^{\NB_0}$.	
Note that 
 the right-hand side of \eqref{QQQ10} is almost surely finite thanks 
to the probabilistic Strichartz estimate (Lemma \ref{LEM:PStr}).
\end{lemma}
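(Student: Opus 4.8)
The plan is to mimic the proof of Lemma \ref{LEM:Z3_2}, but now iterating the dispersive estimate down the recursive structure \eqref{QQQ2} of $\zeta_{2k-1}$. Recall that $\zeta_{2k-1} \sim \int_0^t S(t-t') z_1 z_1 \zeta_{2k-3}(t')\,dt'$. First I would treat the case $1 \le r < 6$. Applying the dispersive estimate \eqref{O9} inside the Duhamel integral gives
\begin{align*}
\| \P_N \zeta_{2k-1}\|_{L^q_T L^r_x}
\les \bigg\| \int_0^t \frac{1}{|t-t'|^{\frac32 - \frac3r}} \| z_1(t')\|_{L^{\rho}_x}^2 \|\zeta_{2k-3}(t')\|_{L^{\rho}_x}\, dt' \bigg\|_{L^q_T}
\end{align*}
for a suitable exponent $\rho$ with $\frac{3}{\rho} = \frac{3}{r'}$ (so that H\"older in $x$ closes: $\frac{1}{r} = \frac{3}{\rho'}$, i.e.\ $\rho = 3r'$). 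Then Young's (or just H\"older's) inequality in $t$ on $[0,T]$ absorbs the singular kernel $|t-t'|^{-(3/2 - 3/r)}$, which is locally integrable since $r < 6$, producing a positive power $T^{\theta}$. This reduces matters to estimating $\| z_1 \|_{L^{a}_T L^{3r'}_x}$ and $\|\zeta_{2k-3}\|_{L^{b}_T L^{3r'}_x}$ for appropriate $a, b$. The point is then to recurse: $\zeta_{2k-3}$ itself is a Duhamel integral of $z_1 z_1 \zeta_{2k-5}$, and we apply \eqref{O9} again. Unwinding the recursion $k-1$ times down to $\zeta_1 = z_1$ expresses everything as a product of $2k-1$ factors, each of which is a space-time Lebesgue norm of $z_1$. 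At each stage where the intermediate spatial exponent lands in $[1,6)$ the dispersive kernel is integrable; the tracking of exponents is a routine (if tedious) bookkeeping exercise.

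The case $r \ge 6$ is handled exactly as in the second half of the proof of Lemma \ref{LEM:Z3_2}: before invoking \eqref{O9} I would first apply Sobolev's inequality $W^{\frac12 - \frac3r +, 6-} \hookrightarrow L^r$ to trade the loss $N^{\frac12 - \frac3r +}$ for a reduction of the spatial exponent to $6-$, and then run the same dispersive-estimate recursion at exponent $6-$, where the kernel $|t-t'|^{-1+}$ is (barely) integrable, again yielding a positive power $T^{\theta_k}$. In both cases, once all $z_1$ factors have been isolated, I would interpolate each $L^{q_k}_T L^{p}_x$ norm with $2 \le p \le 6$ between $L^{q_k}_T L^2_x$ and $L^{q_k}_T L^6_x$ — this is where the split into the $(1-\dl_k)$ and $\dl_k$ exponents in \eqref{QQQ10} comes from, with $\dl_k \in (0,1)$ determined by the interpolation weight — and choose $q_k$ large enough to accommodate all the temporal exponents simultaneously via H\"older in $t$ on $[0,T]$ (which only helps, contributing yet more positive powers of $T$ that I fold into $\theta_k$). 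Finiteness of the right-hand side then follows from Lemma \ref{LEM:PStr} applied with $q = q_k$ and $r = 2$ or $6$.

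The main obstacle I anticipate is not any single hard estimate but rather the careful verification that, at \emph{every} level of the $k-1$-fold recursion, the intermediate spatial exponent that one is forced into stays in the range where the dispersive kernel remains locally integrable in time — i.e.\ strictly below $6$ (or equal to $6-$ after a Sobolev step), and at least $1$ — so that one never loses a power of $T$ and the constants $\theta_k, \dl_k, q_k$ can actually be produced. One should set up the recursion so that the two ``fresh'' $z_1$ factors introduced at each step carry most of the integrability burden while the $\zeta$-factor is passed down at a mild exponent; a clean way to organize this is to prove by induction on $k$ the statement that $\zeta_{2k-1}$ obeys a bound of the form \eqref{QQQ10} with the exponents evolving by an explicit affine rule in $k$, using the $k=3$ (and $\zeta_3 = z_3$, Lemma \ref{LEM:Z3_2}) case as the base. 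I would not grind through the explicit values of $\theta_k$, $\dl_k$, $q_k$ here, since only their existence is asserted in the statement.
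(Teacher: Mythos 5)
Your plan matches the paper's argument: induction on $k$, one application of the dispersive estimate \eqref{O9} per step using the recursion \eqref{QQQ2}, H\"older in $x$ at exponent $3r'$, the inductive hypothesis applied to $\zeta_{2k-3}$ at exponents $(3q,3r')$, and a final interpolation of $L^{3r'}_x$ between $L^2_x$ and $L^6_x$ plus H\"older in $t$ to land on the stated form, with the $r\ge 6$ case handled by a preliminary Sobolev embedding exactly as in Lemma~\ref{LEM:Z3_2}. The ``main obstacle'' you anticipate about exponent bookkeeping is not one: since the lemma already assumes $r>2$, you automatically have $3r'\in(3,6)$, so after a single dispersive step the new spatial exponent is again strictly between $2$ and $6$, the inductive hypothesis (first case) applies directly, and the kernel $|t-t'|^{-(3/2-3/r)}$ stays locally integrable; the only small slip is that the base case is $k=2$ (since $\zeta_3=z_3$), not $k=3$.
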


\begin{proof}
We prove \eqref{QQQ10} by induction.
We first consider the case $r < 6$.
In Lemma \ref{LEM:Z3_2}, 
we proved \eqref{QQQ10}
for $k = 2$.
Now, suppose that \eqref{QQQ10} holds for  $k - 1$.
Then, 
from \eqref{QQQ2} and   the dispersive estimate \eqref{O9}, 
we have
\begin{align}
\| \P_N \zeta_{2k-1}\|_{L^q_T L^r_x}
& \leq \bigg\|\int_0^t \|\P_N S(t - t') z_1^2 \zeta_{2k-3} (t')\|_{L^r_x} dt'\bigg\|_{L^q_T}\notag \\
& \les \bigg\|\int_0^t \frac{1}{|t - t'|^{\frac{3}{2} - \frac 3 r}}
 \| z_1\|_{L^{3r'}_x}^2 \| \zeta_{2k-3}\|_{L^{3r'}_x} dt'\bigg\|_{L^q_T}\notag \\
\intertext{Noting  $3r' < 6$
and applying the inductive hypothesis
with $\theta_{k-1} = \theta_{k-1}(3q, 3r')$, 
 $\dl_{k-1} = \dl_{k-1}(3q, 3r')$,
and $q_{k-1} = q_{k-1}(3q, 3r')$,}
& \les T^{\frac 3 r - \frac 12 + \theta_{k-1} }\| z_1\|_{L^{3q}_T L^{3r'}_x}^2
 \| z_1\|_{L^{q_{k-1}}_T L^{2}_x}^{(2k-3)(1-\dl_{k-1})} 
\| z_1\|_{L^{q_{k-1}}_T L^{6}_x}^{(2k-3)\dl_{k-1}} \notag\\
\intertext{By interpolation in $x$ and H\"older's inequality in $t$,}
& \les T^{\theta_k }
 \| z_1\|_{L^{q_k}_T L^{2}_x}^{(2k-1)(1-\dl_{k})} 
\| z_1\|_{L^{q_k}_T L^{6}_x}^{(2k-1)\dl_{k}} .
\label{QQQ11}
\end{align}

\noi
When $r \geq 6$, 
\eqref{QQQ10} follows from 
 Sobolev's inequality and \eqref{QQQ11}
 as in the proof of Lemma~\ref{LEM:Z3_2}.
\end{proof}

\section{Proof of Theorem \ref{THM:1}}

\label{SEC:NL1}

In this section, 
we study the fixed point problem \eqref{NLS5} around the second order expansion \eqref{v2}
and present the proof of Theorem \ref{THM:1}.
Given $ \frac 12 \leq  \s\leq  1$, 
let   $ \frac 25 \s  < s < \frac 12 $.
Given $\phi \in H^s(\R^3)$, let $\phi^\o$
be its Wiener randomization defined in \eqref{rand}
and let $z_1$ and $z_3$ be as in \eqref{introZ1} and~\eqref{introZ3}.
Define $\G$ by 
\begin{equation*}
\G v(t) = - i \int_0^t S(t-t')\big\{\N(v + z_1 + z_3) - \N(z_1)\big\}(t') dt'.
\end{equation*}

\noi
Then, we have the following nonlinear estimates.

\begin{proposition}\label{PROP:NL1}
Given $ \frac 12 \leq  \s\leq 1$, 
let   $\frac 25 \s < s < \frac{1}{2}$.
Then,  there exist $\theta > 0$,  $C_1, C_2> 0$, 
and an almost surely finite constant $R = R(\o) >0$ such that 
\begin{align}
\|\G v\|_{X^\s([0, T])}
& \leq C_1
\big(\|v\|_{X^\s([0, T])} ^3 + T^\theta R(\o)\big),  \label{nl1a}\\
\|\G v_1 - \G v_2  \|_{X^\s([0, T])}
& \leq C_2
\Big(\sum_{j = 1}^2 \|v_j\|_{X^\s([0, T])} ^2
+ T^\theta R(\o)\Big)
\|v_1 -v_2 \|_{X^\s([0, T])},
\label{nl1b}
\end{align}

\noi
for all $v, v_1, v_2 \in X^{\s}([0, T])$
and $0 < T\leq 1$.

\end{proposition}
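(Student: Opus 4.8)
The plan is to decompose the cubic difference into its trilinear constituents and estimate each one in $N^\s([0,T])$. Writing $\N(u)=u^2\cj u$, the term $z_1^2\cj{z_1}=\N(z_1)$ cancels, so
\begin{align*}
\N(v+z_1+z_3)-\N(z_1)=\sum_{\substack{w_i\in\{v,z_1,z_3\}\\ \text{not all }w_i=z_1}}w_1\,\cj{w_2}\,w_3 ,
\end{align*}
where the complex conjugate placement is irrelevant below and is suppressed. By the second estimate in Lemma~\ref{LEM:lin}, bounding $\|w_1\cj{w_2}w_3\|_{N^\s([0,T])}$ reduces to controlling the quadrilinear form $\big|\int_0^T\!\int_{\R^3}\jb{\nb}^\s(w_1\cj{w_2}w_3)\,\cj w\,dxdt\big|$ over $\|w\|_{Y^0_T}\le1$; after a dyadic decomposition this becomes, exactly as in the proofs of Proposition~\ref{PROP:Z3} and Lemma~\ref{LEM:Z5}, a family of frequency-localized estimates dispatched by the bilinear Strichartz estimate (Lemma~\ref{LEM:biStr}) when the frequencies separate and by H\"older's inequality with the Strichartz estimates (Lemma~\ref{LEM:Str}) otherwise. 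I would organize the sum according to the number of factors equal to $v$.

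First, the term $v\cj vv$ is the deterministic cubic term; the standard trilinear estimate in the $U^p$--$V^p$ framework used in \cite{BOP2} (which is no worse for $\s\in[\tfrac12,1]$ than at the critical exponent $\s=\tfrac12$) gives $\|v\cj vv\|_{N^\s([0,T])}\les\|v\|_{X^\s([0,T])}^3$, producing the $\|v\|^3$ term in \eqref{nl1a}. Next, the terms with no factor of $v$ are purely stochastic: up to permutations they are $z_{j_1}\cj{z_{j_2}}z_{j_3}$ with $(j_1,j_2,j_3)$ equal to $(1,1,3)$, $(1,3,3)$, or $(3,3,3)$. The Duhamel integral of the $(1,1,3)$ piece is precisely $z_5$ from \eqref{introZ5}, so Lemma~\ref{LEM:Z5} controls it in $X^\s([0,T])$ exactly when $\s<\tfrac52s$, i.e. $\tfrac25\s<s$ --- this is the binding restriction in the proposition; the $(1,3,3)$ piece is $\wt z_7$, handled by Lemma~\ref{LEM:Z7}(i) under the weaker condition $\s<3s$, and $z_3\cj{z_3}z_3$ enjoys still more smoothing. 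Thus all such terms are bounded by $T^\theta R_0(\o)$ with $R_0(\o)$ almost surely finite. Finally, the terms with one or two factors of $v$ are treated by the same dyadic case-by-case analysis, placing $v$ in $X^\s([0,T])\embeds Y^\s([0,T])$, measuring each $z_1$ in the probabilistic Strichartz norms of Lemma~\ref{LEM:PStr} (with $\|z_1(0)\|_{H^s}$ controlled by Lemma~\ref{LEM:Hs}), and measuring each $z_3$ either in $X^{2s-}([0,T])$ via Proposition~\ref{PROP:Z3} --- when it carries the top frequency and derivatives must be extracted --- or, when a broader range of space-time Lebesgue exponents is required, in the gained-integrability norms of Lemma~\ref{LEM:Z3_2}; the resulting frequency configurations are as in the proofs of Lemmas~\ref{LEM:Z5} and~\ref{LEM:Z7} and close for $\tfrac25\s<s<\tfrac12$, each term being bounded by $T^\theta\|v\|_{X^\s([0,T])}^a R_0(\o)^{3-a}$ with $a\in\{1,2\}$.

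To reach the stated form of \eqref{nl1a} I would absorb the sub-cubic contributions using Young's inequality and $T\le1$: $T^\theta\|v\|^2R_0\les\|v\|^3+T^\theta R_0^3$ and $T^\theta\|v\|R_0\les\|v\|^3+T^\theta R_0^{3/2}$. Taking $R(\o)$ to be the resulting polynomial in $R_0(\o)$ --- almost surely finite by Lemmas~\ref{LEM:PStr}, \ref{LEM:Hs}, and \ref{LEM:Z3_2}, Proposition~\ref{PROP:Z3}, and Lemma~\ref{LEM:Z5} --- yields \eqref{nl1a}. For the difference estimate \eqref{nl1b} one writes $\G v_1-\G v_2$ as a telescoping sum of trilinear terms in which exactly one factor is $v_1-v_2$ and the remaining factors are chosen among $v_1$, $v_2$, $z_1$, $z_3$, and reruns the same analysis: the cubic-in-$v$ contributions give $\big(\sum_{j}\|v_j\|_{X^\s([0,T])}^2\big)\|v_1-v_2\|_{X^\s([0,T])}$ and all remaining contributions give $T^\theta R(\o)\|v_1-v_2\|_{X^\s([0,T])}$.

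The main obstacle is the treatment of the terms in which $z_3$ appears as a high-frequency factor: since $z_3$ only carries $2s-$ derivatives, which is below $\s$ once $\s>2s$, its rough part cannot be handled by the $X^{2s-}$-bound alone and must be compensated by the extra space-time integrability of Lemma~\ref{LEM:Z3_2}. Beyond this, the only genuine constraint on the regularity comes from the single purely-stochastic term whose Duhamel integral is $z_5$, so that the admissible range $\tfrac25\s<s<\tfrac12$ is exactly what Lemma~\ref{LEM:Z5} permits.
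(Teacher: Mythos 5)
Your proposal is correct and follows essentially the same route as the paper: reduce to the quadrilinear duality form via Lemma~\ref{LEM:lin}, decompose dyadically, dispatch frequency-separated configurations with the bilinear Strichartz estimate and the rest with H\"older plus Lemma~\ref{LEM:Str}, absorb the purely stochastic $(1,1,3)$ and $(1,3,3)$ pieces via Lemmas~\ref{LEM:Z5} and \ref{LEM:Z7}\,(i), measure $z_3$ in $X^{2s-}$ (Proposition~\ref{PROP:Z3}) when it carries the derivatives and in the gained-integrability norms of Lemma~\ref{LEM:Z3_2} otherwise, and close the mixed $v$-terms in the same way. You correctly identify Case~(A), i.e.\ $z_5$, as the binding constraint $\s<\tfrac52 s$; the paper enumerates Cases (A)--(I) explicitly rather than grouping by the number of $v$-factors, but the substance, the lemmas used, and the resulting regularity range are the same.
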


Once we prove Proposition \ref{PROP:NL1}, 
Theorem \ref{THM:1}
immediately follows from a standard argument
and thus we omit details.
See Section 5 in \cite{BOP2}.

\begin{proof}[Proof of Proposition \ref{PROP:NL1}]
Let $0<T \leq 1$.
We only prove \eqref{nl1a} since  \eqref{nl1b} follows in a  similar manner.
Arguing as in the proof of Proposition 4.1 in \cite{BOP2}, 
it suffices to perform 
a case-by-case analysis of expressions of the form:
\begin{align*}
\bigg| \int_0^T \int_{\R^3}
\jb{\nb}^\s ( w_1 w_2 w_3 )w dx dt\bigg|,
\end{align*}

\noi
where $\|w\|_{Y^{0}_T} \leq 1$
and $w_j=  v$, $z_1$, or $z_3$,  $j = 1, 2, 3$, 
but not all $z_1$.
Note that we have dropped  the complex conjugate sign
on $w_2$ 
since it does not play any essential role.
Then,  we need to consider the following cases:
\begin{center}
\begin{tabular}{clclcl}
(A)& $z_1 z_1 z_3$
\qquad\qquad  
& \text{(D)}& $v v z_1$             
\qquad \qquad 
& \text{(G)}& $v z_3 z_3$            
\rule[-3mm]{0pt}{0pt} 
 \\
 \text{(B)}& $z_1 z_3 z_3$     
\qquad \qquad
& \text{(E)}& $v v z_3$
\qquad \qquad
& \text{(H)}& $v z_3 z_1$
\rule[-3mm]{0pt}{0pt} 

\\
 \text{(C)}& $z_3 z_3 z_3$
\qquad \qquad
& \text{(F)}& $v z_1 z_1$
\qquad \qquad
& \text{(\,I\,)}& $v v v$

\end{tabular}
\end{center}

\noi
We already treated 
Cases (A) and (B) in 
Lemmas \ref{LEM:Z5} and \ref{LEM:Z7}.
In particular, Case (A) imposes the regularity restriction:
\[ \s < \frac 52 s.\]

\noi
As we see below, 
 Cases (B) - (I) 
only impose  a milder regularity restriction:
$\s < 3s$.

Given $ \frac 12 \leq  \s\leq 1$, 
let  $\frac 25 \s < s < \frac{1}{2}$.
Define the $B^s(T)$-norm by 
\begin{align*}
 \|z_1\|_{B^s(T)}: = \max\Big(
 \| \jb{\nb}^s z_{1} \|_{L^{5+}_T L^{5}_x}, 
& \| \jb{\nb}^s z_{1} \|_{L^{10+}_T L^{10}_x}, \\
&  \|\jb{\nb}^s z_1\|_{L^{3q}_T L^{\frac{18}{5}+}_x}, 
\| \jb{\nb}^s z_1\|_{L^{15}_T L^{\frac{15}{4}}_x}, 
\| z_1(0)\|_{H^s}
\Big), 
\end{align*}

\noi
where $q \gg 1$ is defined in \eqref{XX2} below.
Then, by applying the dyadic decomposition, we prove 
the following estimate:\footnote{In Case (I), we do not perform the dyadic decomposition
and hence there is no need to have the factor $N_{\max}^{0-}$ on the right-hand side of \eqref{XX1}.}
\begin{align}
\bigg| \int_0^T \int_{\R^3}
N_{\max}^\s ( \P_{N_1} w_1 & \P_{N_2}  w_2 \P_{N_3} w_3 ) \P_{N_4} w \, dx dt\bigg|\notag \\
& \leq C_1
 N_{\max}^{0-}
\big(\|v\|_{X^\s([0, T])} ^3 + T^\theta
C( \|z_1\|_{B^s(T)}, \|z_3\|_{X^{2s-}(T)})\big) 
\label{XX1}
\end{align}

\noi
for all $N_1, \dots, N_4 \in 2^{\NB_0}$.
Once we prove~\eqref{XX1}, the desired estimate~\eqref{nl1a}
follows from summing~\eqref{XX1} over dyadic blocks and applying Lemmas \ref{LEM:PStr} and  \ref{LEM:Hs}
and Proposition \ref{PROP:Z3}.

\smallskip
\noi
{\bf  Case (C):} $z_3z_3z_3 $ case.

By symmetry, we assume $N_1 \leq N_2 \leq N_3$.
Note that we have $N_3 \sim N_{\max}$.

\smallskip

\noi
{\bf $\bullet$  Subcase (C.1):} $N_2 \sim N_3 \sim N_{\max}$.

\quad 
By H\"older's inequality with
\begin{align}
\begin{cases}
\text{time: } \frac{1}{q} + \frac{1}{2+} + \frac{1}{2+} + \frac{1}{\infty} =1  
\text{ for some }q \gg 1, \rule[-3mm]{0pt}{0pt}\\
\text{space: } 
\frac{1}{6+} + \frac{1}{6-} + \frac{1}{6-} + \frac{1}{2} =1
\end{cases}
\label{XX2}
\end{align}

\noi
such that $(2+, 6-)$ is admissible and 
applying Lemmas \ref{LEM:Str} and  \ref{LEM:Z3_2}, 
we have
\begin{align*}
& \text{LHS of }  \eqref{XX1}
 \les N_3^\s \|  z_{3, N_1} \|_{L^{q}_T L^{6+}_x}
\bigg(\prod_{j = 2}^3   \|   z_{3, N_j} \|_{L^{2+}_T L^{6-}_x}\bigg)
 \| w_{N_4} \|_{L^\infty_T L^2_x}\notag\\
& \hphantom{X}
\les T^{0+}
N_1^{ -s +} N_2^{-2s+}N_3^{\s -2s+} 
\|\jb{\nb}^s z_1\|_{L^{3q}_T L^{\frac{18}{5}+}_x}
\| z_1\|_{L^{3q}_T L^{\frac{18}{5}+}_x}^2
\bigg(\prod_{j = 2}^3
 \|z_{3, N_j}\|_{Y^{2s-}_T}\bigg)
 \|w_{N_4}\|_{Y^0_T}\\
 \notag\\
 & \hphantom{X}
\le T^{0+}
N_3^{\s-4s+} 
 C(\|z_1\|_{B^s(T)}, \|z_{3}\|_{X^{2s-}_T}).
\end{align*}

\noi
This yields  \eqref{XX1}, 
provided that 
$\s < 4s$ and $ s > 0$.

\smallskip

\noi
{\bf $\bullet$  Subcase (C.2):} $N_3 \sim N_4 \sim N_{\max} \gg N_2 \geq N_1$.

\noi
$\circ$ \underline{Subsubcase (C.2.a):} 
$N_1, N_2 \ll N_3^\frac{1}{2}$.
\quad 
By the bilinear Strichartz estimate, 
 we have
\begin{align*}
\text{LHS of }   \eqref{XX1}
&    \les N_3^\s\|z_{3, N_1}   z_{3, N_3} \|_{L^2_{T, x}} \|z_{3, N_2}  w_{N_4}\|_{L^2_{T, x}}\\
& \les T^{0+} N_1^{1-2s- }  N_2^{1-2s -}N_3^{\s-  2s  -\frac 12+}N_4^{-\frac{1}{2}+}
\bigg(\prod_{j = 1}^3
 \|z_{3, N_j}\|_{Y^{2s-}_T}\bigg)\\
& \les T^{0+}  N_3^{\s - 4s  +}
 C( \|z_3\|_{X^{2s-}_T}).
\end{align*}

\noi
This yields \eqref{XX1}, 
provided that $\s < 4s$ and $s < \frac 12$.

\smallskip

\noi
$\circ$ \underline{Subsubcase (C.2.b):} $N_1, N_2 \ges N_3^\frac{1}{2} $.
\quad 
Proceeding as in Subcase (C.1), 
we have
\begin{align*}
\text{LHS of } \eqref{XX1}
& \les N_3^\s \|  z_{3, N_1} \|_{L^{q}_T L^{6+}_x}
\bigg(\prod_{j = 2}^3   \|   z_{3, N_2} \|_{L^{2+}_T L^{6-}_x}\bigg)
 \| w_{N_4} \|_{L^\infty_T L^2_x}\notag\\
& \les  T^{0+}
N_1^{ -s +} N_2^{-2s+}N_3^{\s -2s+} 
\| \jb{\nb}^s z_1\|_{L^{3q}_T L^{\frac{18}{5}+}_x}
\| z_1\|_{L^{3q}_T L^{\frac{18}{5}+}_x}^2
\bigg(\prod_{j = 2}^3
 \|z_{3, N_j}\|_{Y^{2s-}_T}\bigg)\\
& \le T^{0+}
N_3^{\s-\frac{7}{2} s+} 
 C(\|z_1\|_{B^s(T)}, \|z_{3}\|_{X^{2s-}_T}).
\end{align*}

\noi
This yields \eqref{XX1}, 
provided that $\s < \frac 72 s$ and $s > 0$.

\smallskip

\noi
$\circ$ \underline{Subsubcase (C.2.c):} $N_2\ges N_3^\frac{1}{2} \gg N_1$.
\quad 
By 
Lemmas~\ref{LEM:Str}
and \ref{LEM:biStr}, 
 we have
\begin{align*}
\text{LHS of } \eqref{XX1}
&  \les N_1^\s     \|z_{3, N_1}  z_{3, N_3} \|_{L^2_{T, x}}
\| z_{3, N_2} \|_{L^{5}_{T, x}}
 \| w_{N_4} \|_{L^\frac{10}{3}_{T, x}}\notag\\
& \le
T^{0+}
N_1^{1 - 2s -} N_2^{-s}
N_3^{\s - 2s- \frac 12+}
C( \|z_{3}\|_{X^{2s-}_T})
 \|\jb{\nb}^s z_{3, N_2} \|_{L^5_{T, x}}
\\
\intertext{By applying Lemma   \ref{LEM:Z3_2},}
& \le
T^{\frac{1}{10}+}
N_1^{1 - 2s -} N_2^{-s}
N_3^{\s - 2s- \frac 12+}
C( \|z_{3}\|_{X^{2s-}_T})
\| \jb{\nb}^s z_1\|_{L^{15}_T L^{\frac{15}{4}}_x}
\| z_1\|_{L^{15}_T L^{\frac{15}{4}}_x}^2\\
& \le T^{\frac{1}{10}+}
N_3^{\s-\frac 72 s+} 
 C(\|z_1\|_{B^s(T)}, \|z_{3}\|_{X^{2s-}_T}).
\end{align*}

\noi
This yields \eqref{XX1}, 
provided that $\s < \frac 72 s$
and $ 0 \leq s < \frac 12$.

\medskip
\noi
{\bf  Case (D):} $v v z_1$ case.
\newline
\indent
By symmetry, we assume $N_1 \geq N_2$.

\smallskip

\noi
{\bf $\bullet$  Subcase (D.1):} $N_1 \ges N_3$.
\newline
\indent
In this case, we have $N_1 \sim N_{\max}$.

\noi
$\circ$ \underline{Subsubcase (D.1.a):} 
$\max(N_2, N_3) \geq N_1^\frac{1}{10}$.
\quad
By H\"older's inequality and Lemma \ref{LEM:Str},
we have
\begin{align*}
\text{LHS of } \eqref{XX1}
& \les N_1^\s \| v_{N_1} \|_{L^\frac{10}{3}_{T, x}}
\| v_{N_2} \|_{L^\frac{10}{3}_{T, x}}
\| z_{1, N_3} \|_{L^{10}_{T, x}}
 \| w_{N_4} \|_{L^\frac{10}{3}_{T, x}}\notag\\
& \leq T^{0+} N_2^{-\s} N_3^{-s} C(\|z_1\|_{B^s(T)}) \| v\|_{X^\s_T}^2\\
& \leq T^{0+} N_{\max}^{0-} C(\|z_1\|_{B^s(T)}) \| v\|_{X^\s_T}^2, 
\end{align*}

\noi
provided that $\s > 0$ and $ s > 0$.

\smallskip

\noi
$\circ$ \underline{Subsubcase (D.1.b):} 
$\max(N_2, N_3) \ll N_1^\frac{1}{10}$.
\quad
In this case, we have $N_1\sim N_4 \sim N_{\max}$.
By  Lemmas \ref{LEM:Str} and \ref{LEM:biStr},
 we have
\begin{align*}
\text{LHS of } \eqref{XX1}
&  \les N_1^\s 
 \|v_{N_1}\|_{L^\frac{10}{3}_{T, x}}
 \|z_{1, N_3}\|_{L^5_{T, x}}
 \|v_{ N_2}  w_{N_4} \|_{L^2_{T, x}}
\\
& \le
T^{0+}
 N_2^{1-\s-}
N_3^{-s}N_4^{-\frac{1}{2}+}
C(\|z_1\|_{B^s(T)}) \| v\|_{X^\s_T}^2\\
& \leq 
T^{0+} N_{\max}^{0-} C(\|z_1\|_{B^s(T)}) \| v\|_{X^\s_T}^2,
\end{align*}

\noi
provided that $ \s \geq 0$ and $ s \geq 0$.

\smallskip

\noi
{\bf $\bullet$ Subcase (D.2):} $N_3 \sim N_4 \gg N_1\geq  N_2$.
\newline
\noi
$\circ$ \underline{Subsubcase (D.2.a):} 
$N_1, N_2 \ll N_3^\frac{1}{2}$.
\quad
By   the bilinear Strichartz estimate, 
 we have
\begin{align*}
\text{LHS of } \eqref{XX1}
&    \les N_3^\s \|v_{N_1}   z_{1, N_3} \|_{L^2_{T, x}} \|v_{N_2}  w_{N_4}\|_{L^2_{T, x}}\\
& \les T^{0+} N_1^{1-\s-} N_2^{1-\s-} N_3^{\s - s - \frac{1}{2} +}N_4^{-\frac{1}{2}+}
\bigg(\prod_{j = 1}^2 \| v_{N_j}\|_{Y^\s_T}\bigg)
\|\P_{N_3} \phi^\o\|_{H^{s}}\\
& \le T^{0+}  N_3^{ - s  +}
C(\|z_1\|_{B^s(T)}) \| v\|_{X^\s_T}^2\\
& \leq 
T^{0+} N_{\max}^{0-} C(\|z_1\|_{B^s(T)}) \| v\|_{X^\s_T}^2,
\end{align*}

\noi
provided that $\s \leq 1$ and  $s>0$.

\smallskip

\noi
$\circ$ \underline{Subsubcase (D.2.b):} $N_1, N_2\ges N_3^\frac{1}{2} $.
\quad 
Proceeding as in Subsubcase (D.1.a) with 
$L^\frac{10}{3}_{T, x}, 
L^\frac{10}{3}_{T, x}, 
L^{10}_{T, x}, L^\frac{10}{3}_{T, x}$-H\"older's inequality, it suffices to note that 
\begin{align*}
N_1^{-\s} N_2^{-\s}N_3^{\s-s} 
\les N_3^{ - s  }
\les  N_{\max}^{0-},
\end{align*}

\noi
provided that $\s \geq 0$ and $ s > 0$.

\smallskip

\noi
$\circ$ \underline{Subsubcase (D.2.c):} $N_1\ges N_3^\frac{1}{2} \gg N_2$.
\quad 
By  Lemmas \ref{LEM:Str} and \ref{LEM:biStr},
 we have
\begin{align*}
\text{LHS of } \eqref{XX1}
&  \les N_3^\s 
 \|v_{N_1}\|_{L^\frac{10}{3}_{T, x}}
 \|z_{1, N_3}\|_{L^5_{T, x}}
 \|v_{ N_2}  w_{N_4} \|_{L^2_{T, x}}
\\
& \le
T^{0+}
N_1^{-\s} N_2^{1-\s-}
N_3^{\s-s}N_4^{-\frac{1}{2}+}
C(\|z_1\|_{B^s(T)}) \| v\|_{X^\s_T}^2\\
& \leq 
T^{0+} N_{\max}^{0-} C(\|z_1\|_{B^s(T)}) \| v\|_{X^\s_T}^2,
\end{align*}

\noi
provided that $0\leq \s \leq 1$ and $ s > 0$.

\medskip
\noi
{\bf  Case (E):} $v v z_3$ case.
\newline
\indent
By symmetry, we assume $N_1 \geq N_2$.

\smallskip

\noi
{\bf $\bullet$  Subcase (E.1):} $N_1 \ges N_3$.
\newline
\indent
In this case, we have $N_1 \sim N_{\max}$.

\noi
$\circ$ \underline{Subsubcase (E.1.a):} 
$\max(N_2, N_3) \geq N_1^\frac{1}{10}$.
\quad 
By H\"older's inequality with \eqref{XX2} as in Subcase (C.1) 
and Lemmas~\ref{LEM:Str} and  \ref{LEM:Z3_2}, 
we have
\begin{align*}
\text{LHS of } \eqref{XX1}
& \les N_1^\s 
\bigg(\prod_{j = 1}^2   \|   v_{N_j} \|_{L^{2+}_T L^{6-}_x}\bigg)
\|  z_{3, N_3} \|_{L^{q}_T L^{6+}_x}
 \| w_{N_4} \|_{L^\infty_T L^2_x}\notag\\
& \les T^{0+}
 N_2^{-\s}N_3^{-s+} 
\| v\|_{X^\s_T}^2
\| \jb{\nb}^s z_1\|_{L^{3q}_T L^{\frac{18}{5}+}_x}
\| z_1\|_{L^{3q}_T L^{\frac{18}{5}+}_x}^2\\
 \notag\\
& \leq 
T^{0+} N_{\max}^{0-} C(\|z_1\|_{B^s(T)}) \| v\|_{X^\s_T}^2,
\end{align*}

\noi
provided that 
$\s > 0$ and $s > 0$.

\smallskip

\noi
$\circ$ \underline{Subsubcase (E.1.b):} 
$\max(N_2, N_3) \ll N_1^\frac{1}{10}$.
\quad
In this case, we have $N_1\sim N_4 \sim N_{\max}$.
By  Lemmas \ref{LEM:Str}, \ref{LEM:biStr}, and \ref{LEM:Z3_2}, 
 we have
\begin{align*}
\text{LHS of } \eqref{XX1}
&  \les N_1^\s 
 \|v_{N_1}\|_{L^\frac{10}{3}_{T, x}}
 \|z_{3, N_3}\|_{L^5_{T, x}}
 \|v_{ N_2}  w_{N_4} \|_{L^2_{T, x}}
\\
& \les
T^{\frac{1}{10}+}
 N_2^{1-\s-}
N_3^{-s}N_4^{-\frac{1}{2}+}
 \| v\|_{X^\s_T}^2\| 
 \jb{\nb}^s z_1\|_{L^{15}_T L^{\frac{15}{4}}_x}
\| z_1\|_{L^{15}_T L^{\frac{15}{4}}_x}^2\\
& \leq 
T^{\frac{1}{10}+} N_{\max}^{0-} C(\|z_1\|_{B^s(T)}) \| v\|_{X^\s_T}^2,
\end{align*}

\noi
provided that $ \s \geq 0$ and $ s \geq 0$.

\smallskip

\noi
{\bf $\bullet$ Subcase (E.2):} $N_3 \sim N_4 \gg N_1\geq  N_2$.
\newline
\noi
$\circ$ \underline{Subsubcase (E.2.a):} 
$N_1, N_2 \ll N_3^\frac{1}{2}$.
\quad
\noi
In this case, we proceed as in Subsubcase (D.2.a).
It suffices to note that 
\begin{align*}
N_1^{1-\s-} N_2^{1-\s-} N_3^{\s - 2s - \frac{1}{2} +}N_4^{-\frac{1}{2}+}
\les   N_3^{ - 2s  +}
\les  N_{\max}^{0-}, 
\end{align*}

\noi
provided that $\s \leq 1$ and  $s>0$.

\smallskip

\noi
$\circ$ \underline{Subsubcase (E.2.b):} $N_1, N_2\ges N_3^\frac{1}{2} $.
\quad 
By H\"older's inequality with~\eqref{XX2} as in Subcase~(C.1) 
and Lemmas~\ref{LEM:Str} and  \ref{LEM:Z3_2}, 
we have
\begin{align*}
\text{LHS of } \eqref{XX1}
& \les N_3^\s 
\bigg(\prod_{j = 1}^2   \|   v_{N_j} \|_{L^{2+}_T L^{6-}_x}\bigg)
\|  z_{3, N_3} \|_{L^{q}_T L^{6+}_x}
 \| w_{N_4} \|_{L^\infty_T L^2_x}\notag\\
& \les T^{0+}
N_1^{-\s}N_2^{-\s} N_3^{\s-s+}
\| v\|_{X^\s_T}^2
\| \jb{\nb}^s z_1\|_{L^{3q}_T L^{\frac{18}{5}+}_x}
\| z_1\|_{L^{3q}_T L^{\frac{18}{5}+}_x}^2\\
 \notag\\
& \leq 
T^{0+} N_{\max}^{0-} C(\|z_1\|_{B^s(T)}) \| v\|_{X^\s_T}^2,
\end{align*}

\noi
provided that 
$\s \geq  0$ and $s > 0$.

\smallskip

\noi
$\circ$ \underline{Subsubcase (E.2.c):} $N_1\ges N_3^\frac{1}{2} \gg N_2$.
\quad 
By  Lemmas \ref{LEM:Str},   \ref{LEM:biStr}, and \ref{LEM:Z3_2}, 
 we have
\begin{align*}
\text{LHS of } \eqref{XX1}
&  \les N_3^\s 
 \|v_{N_1}\|_{L^\frac{10}{3}_{T, x}}
 \|z_{3, N_3}\|_{L^5_{T, x}}
 \|v_{ N_2}  w_{N_4} \|_{L^2_{T, x}}
\\
& \le
T^{\frac{1}{10}}
N_1^{-\s} N_2^{1-\s-}
N_3^{\s-s}N_4^{-\frac{1}{2}+}
 \| v\|_{X^\s_T}^2
\| \jb{\nb}^s z_1\|_{L^{15}_T L^{\frac{15}{4}}_x}
\| z_1\|_{L^{15}_T L^{\frac{15}{4}}_x}^2\\
& \leq 
T^{\frac{1}{10}} N_{\max}^{0-} C(\|z_1\|_{B^s(T)}) \| v\|_{X^\s_T}^2,
\end{align*}

\noi
provided that $0\leq \s \leq 1$ and $ s > 0$.

\medskip
\noi
{\bf  Case (F):} $v z_1 z_1$ case.
\newline
\indent
By symmetry, we assume $N_3 \geq N_2$.

\smallskip

\noi
{\bf $\bullet$  Subcase (F.1):} $N_1 \ges N_3$.
\newline
\indent
In this case, we have $N_1 \sim N_{\max}$.

\noi
$\circ$ \underline{Subsubcase (F.1.a):} 
$\max(N_2, N_3) \geq N_1^\frac{1}{10}$.
\quad 
By  Lemma \ref{LEM:Str},
we have
\begin{align*}
\text{LHS of } \eqref{XX1}
& \les N_1^\s \| v_{N_1} \|_{L^\frac{10}{3}_{T, x}}
\| z_{1, N_2} \|_{L^5_{T, x}}
\| z_{1, N_3} \|_{L^{5}_{T, x}}
 \| w_{N_4} \|_{L^\frac{10}{3}_{T, x}}\notag\\
& \leq T^{0+} N_2^{-s} N_3^{-s} C(\|z_1\|_{B^s(T)}) \| v\|_{X^\s_T}\\
& \leq T^{0+} N_{\max}^{0-} C(\|z_1\|_{B^s(T)}) \| v\|_{X^\s_T},
\end{align*}

\noi
provided that  $ s > 0$. 

\noi
$\circ$ \underline{Subsubcase (F.1.b):} 
$\max(N_2, N_3) \ll N_1^\frac{1}{10}$.
\quad
In this case, we proceed as in Subsubcase (D.1.b).
It suffices to note that 
\begin{align*}
N_2^{1-s-} N_3^{-s}N_4^{-\frac{1}{2}+}
\les  N_{\max}^{0-}, 
\end{align*}

\noi
provided that  $ s \geq 0$.

\smallskip

\noi
{\bf $\bullet$ Subcase (F.2):} $N_2 \sim N_3 \gg N_1$.
\newline
\noi
$\circ$ \underline{Subsubcase (F.2.a):} $N_1\ll N_3^\frac{1}{2}$.
\quad 
By  Lemmas \ref{LEM:Str} and  \ref{LEM:biStr}, 
 we have
\begin{align*}
\text{LHS of } \eqref{XX1}
&  \les N_3^\s 
 \|v_{ N_1}  z_{1, N_2} \|_{L^2_{T, x}}
 \|z_{1, N_3}\|_{L^5_{T, x}}
 \|w_{N_4}\|_{L^\frac{10}{3}_{T, x}}
\\
& \le
T^{0+}
N_1^{1-\s-} N_2^{-s-\frac 12 + }
N_3^{\s-s}
C(\|z_1\|_{B^s(T)}) \| v\|_{X^\s_T}\\
& \le
T^{0+}
N_3^{\frac \s2-2s+}
C(\|z_1\|_{B^s(T)}) \| v\|_{X^\s_T}.
\end{align*}

\noi
This yields \eqref{XX1}, provided that $\s \leq \min (1, 4s-)$.

\smallskip

\noi
$\circ$ \underline{Subsubcase (F.2.b):} $N_1\ges N_3^\frac{1}{2} $.
\quad 
In this case, we proceed with 
$L^\frac{10}{3}_{T, x}, 
L^5_{T, x}, 
L^{5}_{T, x}, L^\frac{10}{3}_{T, x}$-H\"older's inequality.
It  suffices to note that 
\begin{align*}
N_1^{-\s} N_2^{-s}N_3^{\s-s} 
\les N_3^{ \frac \s2 - 2s }
\les  N_{\max}^{0-},
\end{align*}

\noi
provided that $0 \leq \s < 4s$.

\smallskip

\noi
{\bf $\bullet$  Subcase (F.3):} $N_3 \sim N_4 \sim N_{\max} \gg N_1, N_2$.
\newline
\noi
$\circ$ \underline{Subsubcase (F.3.a):} 
$N_1, N_2  \ll N_3^\frac{1}{2} $.
\quad 
By the bilinear Strichartz estimate, 
 we have
\begin{align*}
\text{LHS of }   \eqref{XX1}
&     \les N_3^\s\|v_{N_1}   z_{1, N_3} \|_{L^2_{T, x}} \|z_{1, N_2}  w_{N_4}\|_{L^2_{T, x}}\\
& \les T^{0+} N_1^{1- \s-}N_2^{1-s - }  N_3^{\s-  s -\frac 12 +}N_4^{-\frac{1}{2}+}
\|v\|_{X^\s_T}
\bigg(\prod_{j = 2}^3
 \|\P_{N_j}\phi^\o\|_{H^s}
\bigg)
\\
& \les T^{0+}  N_3^{\frac \s2 - \frac  32 s  +}
 C(\|z_1\|_{B^s(T)}) \| v\|_{X^\s_T}.
\end{align*}

\noi
This yields 
 \eqref{XX1}, 
provided that $ \s \leq \min(1, 3s-)$ and $ s < 1$.

\smallskip

\noi
$\circ$ \underline{Subcase (F.3.b):} $N_1, N_2\ges N_3^\frac{1}{2} $.
\quad 
In this case, we proceed  with 
$L^\frac{10}{3}_{T, x}, 
L^5_{T, x}, 
L^{5}_{T, x}, L^\frac{10}{3}_{T, x}$-H\"older's inequality.
It  suffices to note that 
\begin{align*}
N_1^{-\s} N_2^{-s}N_3^{\s-s} 
\les N_3^{ \frac{\s}2 - \frac{3}{2}s }
\les  N_{\max}^{0-},
\end{align*}

\noi
provided that $0 \leq \s < 3s$.

\smallskip

\noi
$\circ$ \underline{Subcase (F.3.c):} $N_1\ges N_3^\frac{1}{2} \gg N_2$.
\quad 
By Lemmas \ref{LEM:Str}  and  \ref{LEM:biStr},  
 we have
\begin{align*}
\text{LHS of } \eqref{XX1}
&  \les N_3^\s   \| v_{N_1} \|_{L^\frac{10}{3}_{T, x}} 
 \| z_{1, N_3} \|_{L^{5}_{T, x}} 
 \|z_{1, N_2}  w_{N_4} \|_{L^2_{T, x}}
 \notag\\
& \le T^{0+}
N_1^{-\s} N_2^{1-s-}
N_3^{\s- s}N_4^{-\frac{1}{2}+}
 C(\|z_1\|_{B^s(T)}) \| v\|_{X^\s_T}\\
& \le T^{0+}
N_3^{\frac \s2- \frac 32s+} 
 C(\|z_1\|_{B^s(T)}) \| v\|_{X^\s_T}.
\end{align*}

\noi
This
 yields \eqref{XX1}, 
provided that $0\leq \s < 3s$ and $s < 1$.

\smallskip

\noi
$\circ$ \underline{Subcase (F.3.d):} $N_2\ges N_3^\frac{1}{2} \gg N_1$.
\quad 
By Lemmas \ref{LEM:Str}  and  \ref{LEM:biStr},  
 we have
\begin{align*}
\text{LHS of } \eqref{XX1}
&  \les N_3^\s  
 \|v_{N_1}  z_{1, N_3} \|_{L^2_{T, x}}
  \| z_{1, N_2} \|_{L^{5}_{T, x}} 
 \| w_{N_4} \|_{L^\frac{10}{3}_{T, x}}
\\
& \les T^{0+}
N_1^{1-\s-} N_2^{-s}
N_3^{\s- s-\frac 12+}
 C(\|z_1\|_{B^s(T)}) \| v\|_{X^\s_T}\\
& \le T^{0+}
N_3^{\frac \s2- \frac 32s+} 
 C(\|z_1\|_{B^s(T)}) \| v\|_{X^\s_T}. 
\end{align*}

\noi
This yields \eqref{XX1}, 
provided that 
 $ \s \leq \min(1, 3s-)$ and $s\geq 0$.

\medskip
\noi
{\bf  Case (G):} $v z_3 z_3$ case.
\newline
\indent
By symmetry, we assume $N_3 \geq N_2$.

\smallskip

\noi
{\bf $\bullet$  Subcase (G.1):} $N_1 \ges N_3$.
\newline
\indent
In this case, we have $N_1 \sim N_{\max}$.
We can proceed as in Subcase (E.1)
with $v_{N_2}$ replaced by $z_{3, N_2}$.
More precisely, 
when $\max(N_2, N_3) \geq N_1^\frac{1}{10}$, 
we apply   H\"older's inequality  with~\eqref{XX2}
as in Subsubcase (E.1.a).
It suffices to note that 
\begin{align*}
 N_2^{-2s+}N_3^{-s+} 
 \les 
N_{\max}^{0-}, 
\end{align*}

\noi
provided that $ s> 0$.
When $\max(N_2, N_3) \ll N_1^\frac{1}{10}$, 
we proceed with 
Lemmas  \ref{LEM:Str}, \ref{LEM:biStr},  and \ref{LEM:Z3_2}
as in Subsubcase (E.1.b).
In this case, it suffices to note that 
\begin{align*}
 N_2^{1-2s-}
N_3^{-s}N_4^{-\frac{1}{2}+}
\les N_{\max}^{0-}, 
\end{align*}

\noi
provided that $s\geq 0$.

\smallskip

\noi
{\bf $\bullet$ Subcase (G.2):} $N_2 \sim N_3 \gg N_1$.
\newline
\noi
$\circ$ \underline{Subsubcase (G.2.a):} $N_1\ll N_3^\frac{1}{2}$.
\quad 
By Lemmas \ref{LEM:Str}  and  \ref{LEM:biStr},  
 we have
\begin{align*}
\text{LHS of } \eqref{XX1}
&  \les N_3^\s 
 \|v_{ N_1}  z_{3, N_2} \|_{L^2_{T, x}}
 \|z_{3, N_3}\|_{L^5_{T, x}}
 \|w_{N_4}\|_{L^\frac{10}{3}_{T, x}}
\\
& \le
T^{\frac 1{10}}
N_1^{1-\s-} N_2^{-2s-\frac 12 + }
N_3^{\s-s} \| v\|_{X^\s_T}
\|z_{3, N_2}\|_{X^{2s-}_T}
\| \jb{\nb}^s z_1\|_{L^{15}_T L^{\frac{15}{4}}_x}
\| z_1\|_{L^{15}_T L^{\frac{15}{4}}_x}^2 \\
& \le
T^{\frac1{10}}
N_3^{\frac\s2 -3 s+}
C(\|z_1\|_{B^s(T)}, \|z_3\|_{X^{2s-}_T}) \| v\|_{X^\s_T}\\
& \leq 
T^{0+} N_{\max}^{0-} C(\|z_1\|_{B^s(T)}, \|z_3\|_{X^{2s-}_T}) \| v\|_{X^\s_T},
\end{align*}

\noi
provided that $\s \leq \min (1, 6s-)$.

\smallskip

\noi
$\circ$ \underline{Subsubcase (G.2.b):} $N_1\ges N_3^\frac{1}{2} $.
\quad 
In this case, we proceed with
$L^\frac{10}{3}_{T, x}, 
L^5_{T, x}, 
L^{5}_{T, x}, L^\frac{10}{3}_{T, x}$-H\"older's inequality
with Lemma \ref{LEM:Z3_2}.
It  suffices to note that 
\begin{align*}
N_1^{-\s} N_2^{-s}N_3^{\s-s} 
\les N_3^{ \frac \s2 - 2s }
\les  N_{\max}^{0-},
\end{align*}

\noi
provided that $0 \leq \s < 4s$.

\smallskip

\noi
{\bf $\bullet$  Subcase (G.3):} $N_3 \sim N_4 \sim N_{\max} \gg N_1, N_2$.
\newline
\noi
$\circ$ \underline{Subsubcase (G.3.a):} 
$N_1, N_2  \ll N_3^\frac{1}{2} $.
\quad 
In this case, we proceed as in Subsubcase (F.3.a).
It suffices to note that 
\begin{align*}
 N_1^{1- \s-}N_2^{1-2s - }  N_3^{\s- 2 s -\frac 12 +}N_4^{-\frac{1}{2}+}
\les N_3^{\frac \s2 - 3s+}
\les N_{\max}^{0-}, 
\end{align*}

\noi
provided that $ \s \leq \min(1, 6s-)$ and $ s < \frac 12$.

\smallskip

\noi
$\circ$ \underline{Subsubcase (G.3.b):} $N_1, N_2\ges N_3^\frac{1}{2} $.
\quad 
In this case, we proceed with
$L^\frac{10}{3}_{T, x}, 
L^5_{T, x}, 
L^{5}_{T, x}, L^\frac{10}{3}_{T, x}$-H\"older's inequality
as in Subsubcase (F.3.b)
but we apply Lemma \ref{LEM:Z3_2}
to estimate the $L^{5}_{T, x}$-norm of $z_{3, N_j}$, $j = 2, 3$.
The regularity restriction is precisely the same as that in 
Subsubcase~(F.3.b).

\smallskip

\noi
$\circ$ \underline{Subsubcase (G.3.c):} $N_1\ges N_3^\frac{1}{2} \gg N_2$.
\quad 
By Lemmas \ref{LEM:Str},  \ref{LEM:biStr}, and~\ref{LEM:Z3_2},  
 we have
\begin{align*}
\text{LHS of } \eqref{XX1}
&  \les N_3^\s   \| v_{N_1} \|_{L^\frac{10}{3}_{T, x}} 
 \| z_{3, N_3} \|_{L^{5}_{T, x}} 
 \|z_{3, N_2}  w_{N_4} \|_{L^2_{T, x}}
 \notag\\
& \les T^{\frac{1}{10}+}
N_1^{-\s} N_2^{1-2s-}
N_3^{\s- s}N_4^{-\frac{1}{2}+}
 \| v\|_{X^\s_T}
 \| z_{3, N_2}\|_{X^{2s-}_T}
\| \jb{\nb}^s z_1\|_{L^{15}_T L^{\frac{15}{4}}_x}
\| z_1\|_{L^{15}_T L^{\frac{15}{4}}_x}^2\\
& \le T^{\frac{1}{10}+}
N_3^{\frac \s2- 2s+} 
 C(\|z_1\|_{B^s(T)}, \|z_3\|_{X^{2s-}_T}) \| v\|_{X^\s_T}.
\end{align*}

\noi
This yields \eqref{XX1}, 
provided that $0\leq \s < 4s$ and $s < \frac 12$.

\smallskip

\noi
$\circ$ \underline{Subcase (G.3.d):} $N_2\ges N_3^\frac{1}{2} \gg N_1$.
\quad 
By Lemmas \ref{LEM:Str},  \ref{LEM:biStr}, and~\ref{LEM:Z3_2},  
 we have
\begin{align*}
\text{LHS of } \eqref{XX1}
&  \les N_3^\s  
 \|v_{N_1}  z_{3, N_3} \|_{L^2_{T, x}}
  \| z_{3, N_2} \|_{L^{5}_{T, x}} 
 \| w_{N_4} \|_{L^\frac{10}{3}_{T, x}}
\\
& \les T^{\frac{1}{10}+}
N_1^{1-\s-} N_2^{-s}
N_3^{\s- 2s-\frac 12+}
 \| v\|_{X^\s_T}
\| \jb{\nb}^s z_1\|_{L^{15}_T L^{\frac{15}{4}}_x}
\| z_1\|_{L^{15}_T L^{\frac{15}{4}}_x}^2
\| z_{3, N_3} \|_{X^{2s-}_T}
 \\
& \le T^{\frac 1{10}+}
N_1^{\frac \s2- \frac 52s+} 
 C(\|z_1\|_{B^s(T)}, \|z_3\|_{X^{2s-}_T}) \| v\|_{X^\s_T}.
\end{align*}

\noi
This yields \eqref{XX1}, 
provided that 
 $ \s \leq \min(1, 5s-)$
 and $s\geq0$.

\medskip
\noi
{\bf  Case (H):} $v z_3 z_1$ case.
\newline
\noi
{\bf $\bullet$  Subcase (H.1):} $N_1 \sim N_{\max}$.
\newline
\indent
In this case, we can proceed as in Subcase (D.1)
by replacing $v_{N_2}$ with $z_{3, N_2}$.
When $\max(N_2, N_3) \geq N_1^\frac{1}{10}$, 
it suffices to note that 
\begin{align*}
 N_2^{-2s+}
N_3^{-s}
\les N_{\max}^{0-}, 
\end{align*}

\noi
provided that $s> 0$.
When $\max(N_2, N_3) \ll N_1^\frac{1}{10}$, 
it suffices to note that 
\begin{align*}
 N_2^{1-2s-}
N_3^{-s}N_4^{-\frac{1}{2}+}
\les N_{\max}^{0-}, 
\end{align*}

\noi
provided that $s\geq 0$.

\smallskip

\noi
{\bf $\bullet$  Subcase (H.2):} $N_3 \sim N_{\max} \gg N_1$.
\newline
\noi
$\circ$ \underline{Subsubcase (H.2.a):} 
$N_1, N_2  \ll N_3^\frac{1}{2} $.
\quad 
In this case, we have $N_3 \sim N_4 \sim N_{\max}$.
Then, proceeding as in Subsubcase (F.3.a), 
it suffices to note that 
\begin{align*}
 N_1^{1- \s-}N_2^{1-2s - }  N_3^{\s-  s -\frac 12 +}N_4^{-\frac{1}{2}+}
\les N_3^{\frac \s2 - 2s}
\les N_{\max}^{0-}, 
\end{align*}

\noi
provided that $ \s \leq \min(1, 4s-)$ and $ s < \frac 12$.

\smallskip

\noi
$\circ$ \underline{Subsubcase (H.2.b):} $N_1, N_2\ges N_3^\frac{1}{2} $.
\quad 
In this case, we proceed with
$L^\frac{10}{3}_{T, x}, 
L^\frac{10}{3}_{T, x}, 
L^{10}_{T, x}, L^\frac{10}{3}_{T, x}$-H\"older's inequality.
It suffices to note that
\begin{align*}
 N_1^{- \s}N_2^{-2s+}  N_3^{\s-  s}
\les N_3^{\frac \s2 - 2s+}
\les N_{\max}^{0-}, 
\end{align*}

\noi
provided that $0 \leq  \s < 4s$.

\smallskip

\noi
$\circ$ \underline{Subsubcase (H.2.c):} $N_1\ges N_3^\frac{1}{2} \gg N_2$.
\quad 
In this case, we have $N_3 \sim N_4 \sim N_{\max}$.
We can proceed as in Subsubcase~(G.3.c)
with $z_{3, N_3}$ replaced by $z_{1,N_3}$
(without applying  Lemma~\ref{LEM:Z3_2}).
The regularity restriction is precisely the same
as in Subsubcase (G.3.c).

\smallskip

\noi
$\circ$ \underline{Subsubcase (H.2.d):} $N_2\ges N_3^\frac{1}{2} \gg N_1$.
\quad 

We first consider the case $N_3 \sim N_4 \sim N_{\max}$.
By Lemmas \ref{LEM:Str} and \ref{LEM:biStr},  
 we have
\begin{align*}
\text{LHS of } \eqref{XX1}
&  \les N_3^\s  
 \|v_{N_1}  w_{N_4} \|_{L^2_{T, x}}
 \| z_{3, N_2} \|_{L^\frac{10}{3}_{T, x}}
  \| z_{1, N_3} \|_{L^{5}_{T, x}} 
\\
& \les T^{0+}
N_1^{1-\s-} N_2^{-2s+}
N_3^{\s- s} N_4^{-\frac 12+}
 C(\|z_1\|_{B^s(T)},  \| z_{3}\|_{X^{2s-}_T}) \| v\|_{X^\s_T}
 \\
& \le T^{0+}
N_3^{\frac \s2- 2s+} 
 C(\|z_1\|_{B^s(T)},  \| z_{3}\|_{X^{2s-}_T}) \| v\|_{X^\s_T}.
\end{align*}

\noi
This  yields \eqref{XX1}, 
provided that 
 $ \s \leq \min(1, 4s-)$ and $ s> 0$.

When  $N_4\ll N_3$, 
we have $N_2 \sim N_3  \sim N_{\max}$.
In this case, we can repeat the computation above 
with the roles of $z_{3, N_2}$ and $w_{N_4}$
switched.
Noting that 
\begin{align*}
N_1^{1-\s-} N_2^{-2s-\frac 12+}
N_3^{\s- s} 
\les N_3^{\frac \s2- 3s+} \les N_{\max}^{0-}, 
\end{align*}

\noi
we obtain 
 \eqref{XX1}, 
provided that 
 $ \s \leq \min(1, 6s-)$.

\smallskip

\noi
{\bf $\bullet$ Subcase (H.3):} $N_2 \sim N_4 \gg N_1, N_3$.
\newline
\noi
$\circ$ \underline{Subsubcase (H.3.a):} $N_1\ll N_2^\frac{1}{2}$.
\quad 
In this case, we can proceed as in Subsubcase (H.2.d).
It suffices to note that 
\begin{align*}
N_1^{1-\s-} N_2^{\s -2s + }
N_3^{-s} N_4^{-\frac 12 +}
\les 
N_2^{\frac\s2 -2 s+}
\les N_{\max}^{0-}, 
\end{align*}

\noi
provided that $\s \leq \min (1, 4s-)$ and $s \geq 0$.

\smallskip

\noi
$\circ$ \underline{Subsubcase (H.3.b):} $N_1\ges N_2^\frac{1}{2} $.
\quad 
In this case, we proceed with
$L^\frac{10}{3}_{T, x}, 
L^\frac{10}{3}_{T, x}, 
L^{10}_{T, x}, L^\frac{10}{3}_{T, x}$-H\"older's inequality.
It suffices to note that
\begin{align*}
 N_1^{- \s}N_2^{\s-2s+} N_3^{-s}
\les N_2^{\frac \s2 - 2s+}
\les N_{\max}^{0-}, 
\end{align*}

\noi
provided that $0 \leq  \s < 4s$.

\medskip
\noi
{\bf  Case (\,I\,):} $v vv$ case.
\newline
\indent 
In this case, there is no need to perform the dyadic decomposition.
By H\"older's inequality, Sobolev's inequality \eqref{Sobolev}, 
and Lemma \ref{LEM:Str}, 
 we have
\begin{align*}
\text{LHS of } \eqref{XX1}
&  \les 
  \|v \|_{L^{5}_{T, x}}^2
 \| \jb{\nb}^\s v \|_{L^\frac{10}{3}_{T, x}}   
 \| w \|_{L^\frac{10}{3}_{T, x}}
\\
&  \les 
  \|\jb{\nb}^\frac{1}{2} v \|_{L^{5}_{T} L^\frac{30}{11}_x}^2
 \| \jb{\nb}^\s v \|_{L^\frac{10}{3}_{T, x}}   
\\
&  \les 
\| v\|_{X^\s_T}^3,
\end{align*}

\noi
provided that 
 $ \s \geq \frac 12$.

\medskip

Putting all the cases
including 
Cases (A) and (B) treated in 
Lemmas \ref{LEM:Z5} and \ref{LEM:Z7}, 
we conclude that \eqref{XX1} holds, 
provided that  $ \frac 12 \leq  \s\leq 1$
and 
 $\frac 25 \s < s < \frac{1}{2}$.
This completes the proof of Proposition \ref{PROP:NL1}.
\end{proof}

\section{Proof of Theorem \ref{THM:2}}
\label{SEC:THM2}

In this section, we briefly discuss the 
proof of Theorem \ref{THM:2}.
Given $ \frac 16 < s < \frac 12$, 
let $k \in \NB $ such that $\frac{1}{2\al_{k+1}} < s \leq \frac{1}{2\al_{k}}$,
where $\al_k$ is defined in \eqref{al2}.
Our main goal is to  study the fixed point problem 
\eqref{NLS9}.
Define $\wt \G$ by 
\begin{equation*}
\wt \G v(t) = -i  \int_0^t S(t - t')
\bigg\{ \N\bigg(v +\sum_{\l= 1}^{k} \zeta_{2\l-1}\bigg) 
-  \sum_{\l = 2}^{k}
\sum_{\substack{j_1 + j_2 + j_3 = 2\l - 1\\j_1, j_2, j_3 \in \{1,  2\l-3\}}} 
 \zeta_{j_1} \cj{\zeta_{j_2}}\zeta_{j_3}\bigg\}(t') dt', 
\end{equation*}

\noi
where $\zeta_{2\l-1}$, $\l = 1, \dots, k$, 
is as in \eqref{zeta2}.
 Theorem \ref{THM:2} follows
from a standard fixed point argument, 
once we prove the following proposition.

\begin{proposition}\label{PROP:NL2}
Given $ \frac 16 < s < \frac 12$, 
let $k \in \NB $ such that $\frac{1}{2\al_{k+1}} < s < \frac{1}{\al_{k}}$.
Then, 
  there exist $\theta > 0$,  $C_1, C_2> 0$, 
and an almost surely finite constant $R = R(\o) >0$ such that 
\begin{align}
\|\wt \G v\|_{X^\frac{1}{2}([0, T])}
& \leq C_1
\big(\|v\|_{X^\frac{1}{2}([0, T])} ^3 + T^\theta R(\o)\big),  
\label{nl2a}\\
\|\wt \G v_1 - \wt \G v_2  \|_{X^\frac{1}{2}([0, T])}
& \leq C_2
\Big(\sum_{j = 1}^2 \|v_j\|_{X^\frac{1}{2}([0, T])} ^2
+ T^\theta R(\o)\Big)
\|v_1 -v_2 \|_{X^\frac{1}{2}([0, T])},
\label{nl2b}
\end{align}

\noi
for all $v, v_1, v_2 \in X^\frac{1}{2}([0, T])$
and $0 < T\leq 1$.

\end{proposition}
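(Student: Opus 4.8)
The plan is to reduce Proposition \ref{PROP:NL2} to the case-by-case analysis \eqref{casek2} by mimicking the proof of Proposition \ref{PROP:NL1}, exploiting the observation already made in the introduction: the expansion \eqref{vk1} only introduces the \emph{unbalanced} terms $\zeta_{2\l-1}$, $\l = 2, \dots, k$, whose regularity (by Proposition \ref{PROP:Zk}) satisfies $\zeta_{2\l-1} \in X^{\al_\l s -}_\textup{loc}$ with $\al_\l \geq 2$, so that each $\zeta_{2\l-1}$ with $\l \geq 3$ is \emph{at least as smooth} as $\zeta_3 = z_3$. First I would expand $\N(v + \sum_{\l} \zeta_{2\l-1}) - \sum_{\l}\sum_{j_1+j_2+j_3 = 2\l-1} \zeta_{j_1}\cj{\zeta_{j_2}}\zeta_{j_3}$ trilinearly and, via Lemma \ref{LEM:lin}, reduce \eqref{nl2a} to bounding
\begin{align*}
\bigg| \int_0^T \int_{\R^3} \jb{\nb}^\frac12 (w_1 \cj{w_2} w_3) \cj{w}\, dx\, dt \bigg|, \qquad \|w\|_{Y^0_T} \leq 1,
\end{align*}
where each $w_i \in \{v, \zeta_1, \zeta_3, \dots, \zeta_{2k-1}\}$ and the triple is not of the excluded form $\zeta_{j_1}\cj{\zeta_{j_2}}\zeta_{j_3}$ with $(j_1,j_2,j_3) = (1,1,2\l-3)$ up to permutation, $\l = 2,\dots,k$. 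The crucial point, to be verified, is that after this removal \emph{every} surviving term is covered by the analysis already done for Proposition \ref{PROP:NL1} (with $\s = \frac12$): namely, a term $w_1\cj{w_2}w_3$ in which at least one factor is $v$ falls into Cases (D)--(I) of Proposition \ref{PROP:NL1} once one replaces the roles of $z_3$ by the smoother $\zeta_{2\l-1}$; a purely stochastic term with no $v$ factor and at least one factor equal to some $\zeta_{2\l-1}$ with $\l \geq 2$ is controlled by Case (C) (the $z_3 z_3 z_3$ analysis) or by Lemma \ref{LEM:Z7}/Lemma \ref{LEM:Zk_2}, again because $\zeta_{2\l-1}$ is no rougher than $z_3$; and a purely stochastic term with \emph{no} $\zeta_{2\l-1}$, $\l \geq 2$, factor at all — i.e.\ $z_1 z_1 z_1$ — does not occur because it is subtracted (it equals $\zeta_3$'s integrand).

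The one genuinely new family is $\zeta_{j_1}\cj{\zeta_{j_2}}\zeta_{j_3}$ with $(j_1,j_2,j_3) = (1,1,2k-1)$ up to permutation — this is precisely \eqref{casek2a}, and it is \emph{not} excluded from \eqref{casek2} (only $(1,1,2\l-3)$ for $\l \leq k$ is excluded, i.e.\ $2\l-3 \leq 2k-3 < 2k-1$). But the contribution of \eqref{casek2a} under the Duhamel integral is exactly $\zeta_{2k+1}$ defined in \eqref{zeta}, so by Proposition \ref{PROP:Zk} applied with $k+1$ in place of $k$ (legitimate since our hypothesis $s > \frac{1}{2\al_{k+1}}$ together with $s \leq \frac{1}{2\al_k} < \al_k^{-1}$ supplies the required range $0 < s < \al_k^{-1}$), we get $\zeta_{2k+1} \in X^{\al_{k+1}s-}_\textup{loc}$ with $\al_{k+1}s > \frac12$, so this term is placed in $X^\frac12([0,T])$ with a gain $T^\theta$, contributing to the $T^\theta R(\o)$ term on the right of \eqref{nl2a}. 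For the mixed terms $\zeta_{j_1}\cj{\zeta_{j_2}}\zeta_{j_3}$ with, say, $(1, 2\l-3, 2\l'-3)$ type combinations that survive and were not literally among Cases (A)--(I), I would note that they are handled by the same $L^p_t W^{s,r}_x$ and bilinear-Strichartz estimates as in Lemma \ref{LEM:Z7}(i) (the $\wt z_7$-type analysis), since replacing any $z_3$ factor by a smoother $\zeta_{2\l-1}$ only improves the powers of the dyadic parameters; here Lemma \ref{LEM:Zk_2} supplies the needed gain of space-time integrability for $\zeta_{2\l-1}$ analogous to how Lemma \ref{LEM:Z3_2} is used for $z_3$ in the proof of Proposition \ref{PROP:NL1}.

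The difference estimate \eqref{nl2b} follows by the standard telescoping: writing the trilinear form as multilinear in its arguments and replacing one $v_1$ by $v_1 - v_2$ at a time, each resulting term is of the same type as those bounded for \eqref{nl2a} (with one factor being $v_1 - v_2$ and the stochastic objects unchanged), so the same case analysis applies verbatim. Finally, with Proposition \ref{PROP:NL2} in hand, Theorem \ref{THM:2} follows from the contraction mapping principle in a ball of $X^\frac12([0,T])$ for $T = T(\o) > 0$ small, exactly as in \cite{BOP2}, together with the almost-sure finiteness of all the stochastic norms involved, which is guaranteed by Lemmas \ref{LEM:PStr}, \ref{LEM:Hs}, Proposition \ref{PROP:Zk}, and Lemma \ref{LEM:Zk_2}. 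I expect the main obstacle to be purely bookkeeping: carefully checking that the finite but $k$-dependent list of surviving stochastic triples $\zeta_{j_1}\cj{\zeta_{j_2}}\zeta_{j_3}$ in \eqref{casek2} is exhausted by (smoothed versions of) the cases already treated, and that the regularity thresholds for all of them are \emph{strictly} weaker than the threshold $s > \frac{1}{2\al_{k+1}}$ coming from the single term \eqref{casek2a} — i.e.\ that $\zeta_{2k+1}$ really is the worst contribution at the $k$th step, which is the content of combining Proposition \ref{PROP:Zk} with the observation that $\al_\l$ is increasing in $\l$.
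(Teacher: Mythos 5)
Your proposal is correct and follows essentially the same route as the paper's proof: reduce to the case-by-case analysis \eqref{casek2}, observe via Propositions \ref{PROP:Z3}, \ref{PROP:Zk} and Lemmas \ref{LEM:Z3_2}, \ref{LEM:Zk_2} that each $\zeta_{2\l-1}$ with $\l\geq 2$ is at least as regular and integrable as $z_3$ so that Cases (B)--(I) of Proposition \ref{PROP:NL1} (at $\s=\tfrac12$) apply verbatim with $z_3$ replaced by $\zeta_{2\l-1}$, and identify Case (A) as exactly $\zeta_{2k+1}$, which Proposition \ref{PROP:Zk} places in $X^{1/2}_{\mathrm{loc}}$ under the stated hypothesis on $s$. (A small bookkeeping slip: the proposition's hypothesis is $s<\frac{1}{\al_k}$, not $s\leq\frac{1}{2\al_k}$, though the latter holds in the Theorem \ref{THM:2} application and your conclusion $0<s<\al_k^{-1}$ is what's actually needed.)
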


On the one hand,  the upper bound $\frac{1}{\al_k}$ on the range of $s$
in Proposition \ref{PROP:NL2}
comes from the restriction in Proposition \ref{PROP:Zk}.
On the other hand, given $\frac 16 < s < \frac 12$, 
we fix $k \in \NB$ such that 
 $\frac{1}{2\al_{k+1}} < s \leq \frac{1}{2\al_{k}}$
 and 
 apply Proposition \ref{PROP:NL2}.
Namely, the upper bound on $s$ in 
 Proposition \ref{PROP:NL2}
 does not cause any problem in proving Theorem \ref{THM:2}.

\begin{proof}
We only discuss the proof of  \eqref{nl2a} since  \eqref{nl2b} follows in a  similar manner.
As in the proof of Proposition \ref{PROP:NL1}, 
it suffices to perform 
a case-by-case analysis of expressions of the form:
\begin{align*}
\bigg| \int_0^T \int_{\R^3}
\jb{\nb}^\frac{1}{2} ( w_1 w_2 w_3 )w dx dt\bigg|,
\end{align*}

\noi
where $\|w\|_{Y^{0}_T} \leq 1$
and $w_j=  v$, $\zeta_{2\l-1}$, $\l = 1, \dots, k$, 
but not of the form
$\zeta_1 \zeta_1\zeta_{2\l-3}$
for  $\l  \in \{ 2, 3, \dots, k\}$.
Here,  we have dropped  the complex conjugate sign
on $w_2$ 
since it does not play any  role in our analysis.
More concretely,  we need to consider the following cases:
\begin{center}
\begin{tabular}{clclcl}
(A)& $\zeta_1 \zeta_1 \zeta_{2k-1}$
\qquad\qquad  
& \text{(D)}& $v v \zeta_1$             
\qquad \qquad 
& \text{(G)}& $v \zeta_{j_2} \zeta_{j_3}$            
\rule[-3mm]{0pt}{0pt} 
 \\
 \text{(B)}& $\zeta_1 \zeta_{j_2} \zeta_{j_3}$     
\qquad \qquad
& \text{(E)}& $v v \zeta_{j_3}$
\qquad \qquad
& \text{(H)}& $v \zeta_{j_2} \zeta_1$
\rule[-3mm]{0pt}{0pt} 

\\
 \text{(C)}& $\zeta_{j_1} \zeta_{j_2} \zeta_{j_3}$
\qquad \qquad
& \text{(F)}& $v \zeta_1 \zeta_1$
\qquad \qquad
& \text{(\,I\,)}& $v v v$

\end{tabular}
\end{center}

\noi
where   $j_1, j_2, j_3$ can take any value in $\{3, 5, \dots, 2k-1\}$.
As we see below, the worst interaction appears in Case (A)
and all the other cases can be handled as in  the proof of Proposition~\ref{PROP:NL1}.

We first point out that, in the proof of Proposition \ref{PROP:NL1},
the regularity restriction 
coming from Cases (B) - (I) (with $\s = \frac 12$) is $s > \frac 16$.
Moreover, by comparing Proposition \ref{PROP:Z3}
and Lemma \ref{LEM:Z3_2}
with 
Proposition \ref{PROP:Zk}
and Lemma \ref{LEM:Zk_2}, 
we see that 
the unbalanced higher order term $\zeta_{2\l-1}$
for  $\l  \in \{ 2, 3, \dots, k\}$
enjoys (at least) the same regularity properties
as $z_3 = \zeta_3$
both in terms of differentiability and space-time integrability.
Therefore, we can simply  apply the estimates
in the proofs of Lemma \ref{LEM:Z7}\,(i)
for Case (B)
and Proposition \ref{PROP:NL1}
for Cases (C) - (I)
and conclude that the contributions
from Cases (B) - (I) can be bounded, provided $s > \frac 16$.

It remains to consider  Case (A).
In view of \eqref{zeta}, 
the contribution from Case (A) is nothing but $\zeta_{2k+1}$.
Hence, by applying 
Proposition \ref{PROP:Zk}
with $\s = \frac 12$, 
we conclude that 
the contribution in Case (A) can be estimated by 
$T^\theta R(\o)$, 
provided that 
$\frac{1}{2\al_{k+1}} < s < \frac{1}{\al_{k}}$.
This completes the proof of Proposition \ref{PROP:NL2}.
\end{proof}

\appendix

\section{On the deterministic non-smoothing
of the Duhamel integral operator}
\label{APP:A}

In this appendix, we 
show that there is no deterministic smoothing on the Duhamel integral operator
$\I$ defined in \eqref{Duhamel1} when $s \leq \frac 12$.

\begin{lemma}\label{LEM:Z3_1}

Let   $\s > 3s-1$.
Then, 
the estimate~\eqref{Z3_3}  does not hold.
In particular, there is no deterministic nonlinear smoothing when $s \leq  \frac 12$.

\end{lemma}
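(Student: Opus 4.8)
The plan is to disprove \eqref{Z3_3} for $\s > 3s-1$ by exhibiting, for each large $N$, explicit functions $\phi_1,\phi_2,\phi_3 \in H^s(\R^3)$ (depending on $N$ and an auxiliary parameter $\lambda = \lambda(N)$) for which the ratio $\|\I(u_1,u_2,u_3)\|_{X^\s([0,1])}\big/\prod_{j=1}^3\|\phi_j\|_{H^s}$ diverges as $N\to\infty$, which precludes any finite constant $C$ in \eqref{Z3_3}. The construction refines the one in the proof of Proposition \ref{PROP:Z3}\,(ii): fix $\lambda = \lambda(N)$ with $1 \le \lambda \ll N$ (to be chosen), set $Q = (-\tfrac12,\tfrac12]^3$, $e_1 = (1,0,0)$, $e_2 = (0,1,0)$, and take
\[
\ft{\phi_1} = \ind_{(N+100\lambda)e_1 + \lambda Q}, \qquad
\ft{\phi_2} = \ind_{Ne_1 + \lambda Q}, \qquad
\ft{\phi_3} = \ind_{Ne_1 + 100\lambda e_2 + \lambda Q}.
\]
Since $\jb{\xi}\sim N$ on each of these boxes (as $\lambda \ll N$), one has $\|\phi_j\|_{H^s}\sim N^s\lambda^{3/2}$, hence $\prod_{j=1}^3\|\phi_j\|_{H^s}\sim N^{3s}\lambda^{9/2}$.

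For the numerator I would compute, exactly as in \eqref{O6}--\eqref{O7}, that with $u_j = S(t)\phi_j$,
\[
\ft{\I(u_1,u_2,u_3)}(t,\xi) = -i\,e^{-it|\xi|^2}\int_0^t \intt_{\xi = \xi_1 - \xi_2 + \xi_3} e^{it'\Phi(\bar\xi)}\,\ft{\phi_1}(\xi_1)\cj{\ft{\phi_2}(\xi_2)}\,\ft{\phi_3}(\xi_3)\,d\xi_1 d\xi_2 dt',
\]
with $\Phi(\bar\xi) = 2\jb{\xi-\xi_1,\ \xi-\xi_3}_{\R^3}$. On the support of the integrand, $\xi - \xi_1 \in (Ne_1 + 100\lambda e_2 + \lambda Q) - (Ne_1 + \lambda Q)$ and $\xi-\xi_3 \in ((N+100\lambda)e_1+\lambda Q)-(Ne_1+\lambda Q)$ are both of size $O(\lambda)$, so $|\Phi(\bar\xi)| \les \lambda^2$. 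Hence at the short time $t_0 := c_0\lambda^{-2}\in(0,1]$ (with $c_0$ a small absolute constant) one has $\Re\int_0^{t_0}e^{it'\Phi}dt' \ges t_0$ uniformly in the integration variables. Iterating the convolution lower bound of Lemma \ref{LEM:conv} (once to convolve two of the indicators, once more for the third) then yields, on a $\lambda$-cube $Q^*$ centered at frequency $(N+100\lambda)e_1 + 100\lambda e_2$,
\[
|\ft{\I(u_1,u_2,u_3)}(t_0,\xi)| \ges t_0\,\lambda^6\,\ind_{Q^*}(\xi) \sim \lambda^4\,\ind_{Q^*}(\xi).
\]
Using the embedding $X^\s([0,1]) \embeds L^\infty([0,1];H^\s)$ together with $\jb{\xi}\sim N$ on $Q^*$ gives
\[
\|\I(u_1,u_2,u_3)\|_{X^\s([0,1])} \ges \|\I(u_1,u_2,u_3)(t_0)\|_{H^\s} \ges N^\s \lambda^{11/2}.
\]

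Combining the two bounds, the ratio is $\ges N^{\s - 3s}\lambda$. Since $\s > 3s - 1$ we have $3s - \s < 1$, so for a sufficiently small $\eps > 0$ the exponent $\alpha := \max(0, 3s-\s) + \eps$ satisfies $0 \le \alpha < 1$; choosing $\lambda = N^\alpha$ gives $1 \le \lambda \ll N$ for $N$ large and makes the ratio $\ges N^{\eps} \to \infty$, contradicting \eqref{Z3_3}. This proves the first assertion. For the second, note that when $s \le \tfrac12$ one has $3s - 1 \le s$, so $\{\s : \s > s\} \subset \{\s : \s > 3s - 1\}$; thus \eqref{Z3_3} fails for every $\s > s$, i.e.\ there is no deterministic nonlinear smoothing. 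The step requiring the most care is the choice and verification that $\lambda$ may grow polynomially in $N$: one must check that truncating to $t_0 \sim \lambda^{-2}$ controls the resonance function $\Phi$ at the cost of only a factor $\lambda^{-2}$, since it is precisely this extra power of $\lambda$ (compared with the fixed-$\lambda$ argument of Proposition \ref{PROP:Z3}\,(ii), which only yields the threshold $\s > 3s$) that sharpens the threshold to $\s > 3s - 1$.
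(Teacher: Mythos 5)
Your proof is correct, and it follows the same underlying strategy as the paper's Lemma~\ref{LEM:Z3_1}: localize the three factors on $\lambda$-cubes on which the resonance function $\Phi(\bar\xi)=2\jb{\xi-\xi_1,\xi-\xi_3}$ is small, integrate up to a time comparable to $|\Phi|^{-1}$ so the phase stays stationary, apply Lemma~\ref{LEM:conv} twice and compare with $\prod_j\|\phi_j\|_{H^s}$. Where you differ is in the frequency geometry and the scaling. The paper takes three well-separated boxes of the \emph{same} side $\lambda$, placed at $Le_1$, $0$, $Ne_2$, and only optimizes at the end by setting $\lambda\sim L\sim N$; there $|\Phi|\les NL$, the time is $t_*\sim(NL)^{-1}\sim N^{-2}$, and the ratio is $N^{\s-3s+1}$. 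You instead recycle the configuration from Proposition~\ref{PROP:Z3}\,(ii) — three $\lambda$-cubes clustered around $Ne_1$ with pairwise separations $\sim\lambda$ — but let $\lambda=N^\alpha$ grow and truncate at $t_0\sim\lambda^{-2}$, since there $|\Phi|\les\lambda^2$; the ratio becomes $N^{\s-3s}\lambda$, and tuning $\alpha\in(0,1)$ recovers the same threshold $\s>3s-1$. Both computations are tight, and the numerology agrees (numerator $\sim t_*\lambda^{15/2}N^\s$ in each case). The nice thing your parametrization makes explicit is the continuum between Proposition~\ref{PROP:Z3}\,(ii)'s fixed-$\lambda$ argument (which only rules out $\s>3s$) and the sharp threshold $\s>3s-1$ as $\alpha\uparrow 1$, so your write-up unifies the proposition and the lemma into one family of counterexamples rather than two separate constructions.
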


\begin{proof}

Given $N \geq  L \geq  \l \gg 1$,  
consider $u_j = S(t) \phi_j$,  $j = 1, 2, 3$,  
such that 
$\ft \phi_j( \xi)  \sim  \ind_{A_j}(\xi)$, 
 where 
 \begin{align*}
 A_1 = L e_1 + \l Q, \qquad 
  A_2 = \l Q, \qquad \text{and}\qquad
  A_3 = N  e_2 + \l Q.
\end{align*}

\noi
Then, we have 
\begin{align}
\prod_{j = 1}^3 \| u_j(0) \|_{H^s}
\sim \l^{\frac{9}{2} + s} L^s N^s.
\label{O12}
\end{align}

Let $A = Le_1 + N e_2 + \l Q$.
By writing
\begin{align}
\ind_{A}  (\xi)\F_x[\I(u_1, & u_2, u_3)](t, \xi) 
\notag \\
& 
= -i \ind_{A} (\xi) e^{-it |\xi|^2} \int_0^t \intt_{\xi = \xi_1 - \xi_2 + \xi_3} 
e^{i t' \Phi(\bar \xi)}  \prod_{j = 1}^3 \ind_{A_j} (\xi_j) d\xi_1 d\xi_2 dt',
\label{O13}
\end{align}

\noi
we see that 
 the non-trivial contribution to \eqref{O13}
comes from 
$\xi_j \in A_j$, $j = 1, 2, 3$.
Moreover, in this case, $\Phi(\bar \xi)$ defined \eqref{O7}
satisfies $|\Phi (\bar \xi) |\les N L$.
In particular, by choosing 
\begin{align}
t_* \sim \frac1{NL}
\label{O14}
\end{align}

\noi
such that $t_*|\Phi (\bar \xi) |\ll 1 $,
we have \eqref{O8} for all $t' \in [0, t_*]$.
Hence, by Lemma \ref{LEM:conv} with \eqref{O14}, 
we obtain 
\begin{align}
\| \I(u_1,  u_2, u_3) \|_{X^\s([0, 1])} \ges \| \I(u_1,  u_2, u_3)(t_*) \|_{  H^\s}
\ges t_*  \l^\frac{15}{2} N^\s 
\sim  \l^\frac{15}{2} L^{-1}N^{\s-1} .
\label{O15}
\end{align}

\noi
Therefore, it follows from \eqref{O12} and \eqref{O15} that 
by choosing $\l \sim L \sim N$, we have
\begin{align*}
\frac{\| \I(u_1,  u_2, u_3) \|_{X^\s([0, 1])}}
{\prod_{j=1}^3 \| u_j(0) \|_{H^s}}
\ges  \l^{3-s}L^{-1-s} N^{\s-1-s} 
\sim N^{\s - 3s+1} \too \infty, 
\end{align*}

\noi
as $N \to \infty$, provided that $\s > 3s-1$.
This shows that the estimate \eqref{Z3_3} can not hold
when $\s > 3s-1$.
\end{proof}

Define the quintilinear operator $\I^{(2)}$ by 
\begin{align*}
\I^{(2)}(u_1, \dots, u_5) (t) 
:= -i  \sum_{\l = 1}^3 
\int_0^t S(t - t') 
 v_{j_1} \cj{v_{j_2}}v_{j_3} (t') dt',
\end{align*}

\noi
where 
$v_{j_i} = u_i$ for $i \ne \l$
and $v_{j_\l} = \I(u_\l, u_4, u_5)$.
This basically corresponds to the second order term appearing
in the power series expansion for \eqref{NLS1}.
In particular, we have  that $z_5 = \I^{(2)}(z_1, \dots, z_1)$.

By a computation similar to the proof of Lemma \ref{LEM:Z3_1}, 
we can prove the following deterministic non-smoothing
on the second order term.
Let $\s > 5s - 2$.
Then, there is no finite constant $C> 0$ such that 
\begin{align*}
\big\|\I^{(2)}( S(t) \phi_1, \dots,  S(t) \phi_5)\big\|_{X^\s([0, 1])}
\leq  C \prod_{j = 1}^5 \| \phi_j  \|_{H^s}
\end{align*}

\noi
for all $\phi_j \in H^s(\R^3)$.
In particular, this shows that 
when $s \le \frac 12$,
there is no deterministic  smoothing on the second order term $\I^{(2)}$.
A similar comment applies to the higher order terms.

\begin{acknowledgment}

\rm 
This research was partially supported by 
Research in Groups at International Centre for Mathematical Sciences, Edinburgh, United  Kingdom.
 \'A.B.~was
 partially supported by a grant from the Simons Foundation (No.~246024).
T.O.~was supported by the European Research Council (grant no.~637995 ``ProbDynDispEq'').

\end{acknowledgment}

\end{document}